\newtheorem{thm}{Theorem}
\newtheorem{lem}{Lemma}
\newtheorem{cor}{Corollary}
\newtheorem{rem}{Remark}
\newtheorem{ass}{Assumption}
\DeclareMathOperator{\R}{\mathbb{R}} 
\DeclareMathOperator{\Ocal}{\mathcal{O}} 
\newcommand{\Exp}[1]{\mathbb{E}\left[#1\right]} 
\newcommand{\set}[1]{\left\{#1\right\}}
\newcommand{\sets}[1]{\{#1\}}
\newcommand{\norm}[1]{\left\Vert#1\right\Vert}
\newcommand{\norms}[1]{\Vert#1\Vert}
\newcommand{\dom}[1]{\mathrm{dom}\left(#1\right)}
\newcommand{\iprods}[1]{\langle#1\rangle}
\newcommand{\mytxtbi}[1]{\textbf{\textit{#1}}}
\newcommand\blfootnote[1]{%
  \begingroup
  \renewcommand\thefootnote{}\footnote{#1}%
  \addtocounter{footnote}{-1}%
  \endgroup
}
\begin{document}

\title{A Unified Convergence Analysis for Shuffling-Type Gradient Methods}
       
\author{\name{Lam M. Nguyen} \email{LamNguyen.MLTD@ibm.com} \\
	\addr IBM Research, Thomas J. Watson Research Center \\ Yorktown Heights, NY10598, USA.
      	\AND
	\name{Quoc Tran-Dinh} \email{quoctd@email.unc.edu} \\
	\addr Department of Statistics and Operations Research\\
	The University of North Carolina at Chapel Hill \\ Chapel Hill, NC27599, USA.
	\AND
	\name{Dzung T. Phan} \email{phandu@us.ibm.com}\\
	\addr IBM Research, Thomas J. Watson Research Center \\ Yorktown Heights, NY10598, USA.
	\AND
	\name{Phuong Ha Nguyen} \email{phuongha.ntu@gmail.com}\\
	\addr eBay Inc., San Jose, CA 95125.
	\AND
	\name{Marten van Dijk} \email{marten.van.dijk@cwi.nl}\\
        \addr Centrum Wiskunde $\&$ Informatica, Amsterdam, Netherlands.
       }

\editor{Prateek Jain}

   \maketitle

\begin{abstract}
In this paper, we propose a unified convergence analysis for a class of generic shuffling-type gradient methods for solving finite-sum optimization problems.
Our analysis works with any sampling without replacement strategy and covers many known variants such as randomized reshuffling, deterministic or randomized single permutation, and cyclic and incremental gradient schemes.
We focus on two different settings: strongly convex and nonconvex problems, but also discuss the non-strongly convex case.
Our main contribution consists of new non-asymptotic and asymptotic convergence rates for a wide class of shuffling-type gradient methods in both nonconvex and convex settings. 
We also study  uniformly randomized shuffling variants with different learning rates and model assumptions.
While our rate in the nonconvex case is new and significantly improved over existing works under standard assumptions, the rate on the strongly convex one matches the existing best-known rates prior to this paper up to a constant factor without imposing a bounded gradient condition. 
Finally, we empirically illustrate our theoretical results via two numerical examples: nonconvex logistic regression and neural network training examples.
As byproducts, our results suggest some appropriate choices for diminishing learning rates in certain shuffling variants.
\end{abstract}

\begin{keywords}
  Stochastic gradient algorithm, shuffling-type gradient scheme, sampling without replacement, non-convex finite-sum minimization, strongly convex minimization
\end{keywords}

\blfootnote{Corresponding author: Lam M. Nguyen}

\section{Introduction}\label{sec:intro}
This paper aims at analyzing convergence rates of a general class of shuffling-type gradient methods for solving the following well-known finite sum minimization problem:
\begin{equation}\label{ERM_problem_01}
    \min_{w \in \mathbb{R}^d} \left\{ F(w) := \frac{1}{n} \sum_{i=1}^n f(w ; i)  \right\},
\tag{P}
\end{equation}
where $f(\cdot; i) : \R^d\to\R$ is  smooth and possibly nonconvex for $i \in [n] := \set{1,\cdots,n}$.

Problem \eqref{ERM_problem_01} covers a wide range of convex and nonconvex models in machine learning and statistical learning, including logistic regression, multi-kernel learning, conditional random fields, and neural networks. 
Especially, it covers \textit{empirical risk minimization} as a special case. 
Very often, \eqref{ERM_problem_01} lives in a high dimensional space, and/or it has a large number of components $n$.
Therefore, deterministic optimization methods relying on full gradients are usually inefficient to solve \eqref{ERM_problem_01}, see, e.g., \citep{Bottou2018,sra2012optimization}.

The stochastic gradient descent (SGD) method has been widely used to solve \eqref{ERM_problem_01} due to its efficiency in dealing with large-scale problems in big data regimes.
The first variant of SGD (called \textit{stochastic approximation method}) was introduced by \citep{RM1951}.
In the last fifteen years, there has been a tremendous progress of research in SGD, where various stochastic and randomized-based algorithms have been proposed, making it be one of the most active research areas in optimization as well as in machine learning.
In addition, due to the deep learning revolution, research on SGD for nonconvex optimization for deep learning also becomes extremely active nowadays.

SGD is also a method of choice to solve the following stochastic optimization problem:
\begin{equation}\label{Expectation_problem_01}
    \min_{w \in \mathbb{R}^d} \Big\{ F(w) := \mathbb{E}_{(x,y) \sim \mathcal{D}} \big[ f(w; x, y) \big]  \Big\},
\end{equation}
where $\mathcal{D}$ is some probability distribution. 
Clearly, \eqref{ERM_problem_01} can be cast into a special case of \eqref{Expectation_problem_01}.

To solve \eqref{ERM_problem_01}, at each iteration $k$ (for $k := 0,1,\cdots, K$), SGD chooses an index $i_k \in [n]$ (or a minibatch) at random and updates an iterate sequence $\sets{w_k}$ as $w_{k+1} := w_k - \eta_k \nabla f(w_k ; i_k)$ from a given starting point $w_0$, which is up to $n$ times ``component gradient'' cheaper than one iteration of a full gradient method with the update $w_{k+1} := w_{k} -  \frac{\eta_k}{n} \sum_{i=1}^n f(w_k ; i)$, where $\eta_k > 0$ is called a learning rate at the $k$-th iteration and $i_k = (x_k, y_k)$, a single sample or a mini-batch of the input data $(x, y)$. 
Although the first variant of SGD was introduced in the 1950s, its convergence rate was investigated much later, see, e.g., \citep{Polyak1992}.
The convergence rate of SGD for solving \eqref{Expectation_problem_01} under strong convexity is $\Ocal( K^{-1})$ \citep{Nemirovski2009,Polyak1992,Nguyen2018_sgdhogwild}, and for finding a stationary point of \eqref{Expectation_problem_01} in the nonconvex case is $\Ocal\big( K^{-1/2}\big)$ \citep{ghadimi2013stochastic}, where $K$ is the total iteration number.
In particular, these rates also apply to \eqref{ERM_problem_01}.

\paragraph{Motivation.}
This paper is motivated by a number of observations as follows.

\indent{$\bullet$} Firstly, shuffling gradient-type methods are widely used in practice. 
The classical SGD scheme (we refer to it as the \textit{standard SGD} method in this paper) for solving \eqref{ERM_problem_01} relies on an i.i.d. sampling scheme to select components $\nabla f(\cdot; i)$ for updating the iterates $w_k$.
In practice, however,  other mechanisms for selecting components $\nabla f(\cdot; i)$ such as \textit{randomized [re]shuffling techniques} are more desirable to use for implementing stochastic gradient algorithms.
Shuffling strategies  are easier and faster to implement in practice. 
They have been implemented in several well-known packages such as TensorFlow and PyTorch.
In addition, it has been recognized that shuffling-type methods often decrease the training loss faster than standard SGD, see, e.g.,  \citep{bottou2009curiously,bottou2012stochastic,JMLR:v18:17-632}. 
However, convergence guarantee of shuffling-type algorithms has just recently emerged.
This motivates us to conduct a unified analysis for a general class of shuffling-type algorithms.

\indent{$\bullet$} Secondly, convergence analysis of shuffling-type schemes and cyclic strategies is much more challenging than that of the standard SGD or its variants due to the lack of independence.
Hitherto, there is a limited number of theoretical works that analyze the convergence rates of shuffling techniques, where a majority has focussed on the strongly convex case \citep{gurbuzbalaban2015random,haochen2018random,Safran2019HowGoodSGDShuffling,nagaraj2019sgd}.  
To the best of our knowledge, hitherto and prior to our work, only \citep{li2019incremental,meng2019convergence} have studied convergence rates of shuffling-type gradient methods for solving nonconvex instances of \eqref{ERM_problem_01}.
There exists no unified analysis that can cover a wide class of shuffling-type gradient algorithms under different assumptions ranging from strongly convex to nonconvex cases.
In this paper, we will provide some key elements to form a unified analysis framework for shuffling-type gradient methods, which can be applied to different variants.
While we only focus on the strongly convex and nonconvex cases without specifying shuffling strategy, we believe that our framework can be customized to take into account additional or alternative assumptions to achieve a possibly better convergence rate (see, e.g., \citep{mishchenko2020random} as an example).

\indent{$\bullet$} Thirdly, existing shuffling-type gradient algorithms for the strongly convex case of \eqref{ERM_problem_01} such as \citep{ahn2020sgd,gurbuzbalaban2015random,haochen2018random,Safran2019HowGoodSGDShuffling,nagaraj2019sgd} require a \textbf{bounded gradient assumption}.
Although the bounded gradient condition is widely used and accepted for strongly convex problems, it leads to implicitly imposing a ball constraint on \eqref{ERM_problem_01}.
More precisely, if $F$ is $\mu$-strongly convex and $G$-bounded gradient as $\sup_{i\in [n]}\norm{\nabla{f}(w ; i)}  \leq G$ for $\forall w \in \dom{F}$ (the domain of $F$), then it is easy to show that $\frac{\mu}{2}\norm{w - w^{\star}}^2 \leq F(w) - F(w^{\star}) \leq \frac{1}{2\mu}\norm{\nabla{F}(w)}^2 \leq \frac{G^2}{2\mu}$ for all $w \in\dom{F}$, where $w^{\star}$ is the unique minimizer of $F$  \citep{Nguyen2018_sgdhogwild}.
This expression implies that $\norm{w - w^{\star}} \leq \frac{G}{\mu}$ as an implicit ball constraint on  \eqref{ERM_problem_01}. 
However, we usually do not know $w^{\star}$ to quantify this condition in practice.
Note that since $F$ is strongly convex, it is also coercive. 
Thus any sublevel set $\{ w \in \dom{F} : F(w) \leq F(w_0)\}$ (for a fixed $w_0$) is bounded, and consequently, $G$ exists. 
Nevertheless, it is still difficult to quantify $G$ over this sublevel set since we do not know the size of this sublevel set explicitly.
If $G$ is quantified inappropriately, it will change problem \eqref{ERM_problem_01} from the unconstrained to the constrained setting which may not be equivalent to \eqref{ERM_problem_01}.
In addition, a projection is required to guarantee the feasibility of the iterates in this case, adding another computational cost, see, e.g., \citep{nagaraj2019sgd}.
A recent work in \citep{mishchenko2020random} can also avoid the bounded gradient condition in their analysis, but only focuses on the case where each component $f(\cdot; i)$ is convex.

\indent{$\bullet$} Fourthly, for the strongly convex case, prior to this work, existing analysis relies on a set of strong assumptions.
Such assumptions often include: strong convexity, $L$-smoothness, Lipschitz Hessian, and bounded gradient.
HaoChen and Sra  prove an $\Ocal((nT)^{-2} + T^{-3})$ convergence rate for randomized reshuffling scheme in \citep{haochen2018random},  where $T$ is the number of epochs (i.e. the number of passes over $n$ component functions $f(\cdot; i)$), but under stronger assumptions than ours.
Further improvement can be found in \citep{ahn2020sgd,mishchenko2020random}.
The rates in these papers nearly match the lower bound proven in  \citep{Safran2019HowGoodSGDShuffling} or a sharper one in \citep{rajput2020closing}. 
Our rate is $\Ocal(T^{-2})$ in the general case, and $\Ocal(1/(nT^2))$ in the randomized reshuffling case, but only requires $f(\cdot; i)$ to be $L$-smooth and and $F$ to be strongly convex with bounded variance.
Hence, it is unclear if one can fairly compare ours and \citep{haochen2018random} since they consider two different classes of problems due to the use of different sets of assumptions.  

\indent{$\bullet$} Finally, prior to our work, convergence analysis of shuffling-type gradient schemes has not been rigorously investigated for the nonconvex setting of \eqref{ERM_problem_01}.
Existing works only focus on special variants such as incremental gradient or using nonstandard criterion \citep{li2019incremental,meng2019convergence}.
Our work is the first analyzing convergence rates of the general shuffling-type gradient scheme in the nonconvex case under standard assumptions, which achieves the best known $\Ocal{(T^{-2/3})}$ or $\Ocal(n^{-1/3}T^{-2/3})$ rate in epoch.

\paragraph{Contribution.}
In this paper, we develop a new and unified convergence analysis framework for general shuffling-type gradient methods to solve \eqref{ERM_problem_01} and apply it to different shuffling variants in both nonconvex and strongly convex settings under standard assumptions.
More specifically, our contribution can be summarized as follows:
\begin{compactitem}
    \item[$\mathrm{(a)}$]
    We prove $\Ocal(1/T^2)$-convergence rate in epoch of a generic shuffling-type gradient scheme for the strongly convex case without imposing ``gradient boundedness'' and/or Lipschitz Hessian assumptions, e.g., in \citep{gurbuzbalaban2015random,haochen2018random}. 
    In addition, our analysis does not require convexity of each component function as in some existing works. 
    Similar to  \citep{gurbuzbalaban2015random} our rate also can be viewed as $\Ocal(1/t^2)$ for any $1 \leq t\leq T$ without fixing $T$ a priori as in other works.      
    
    \item[$\mathrm{(b)}$]
    If either a general bounded variance condition is imposed on $F$ (see Assumption~\ref{ass_general_bounded_variance}) 
    or each component $f(\cdot; i)$   is convex for all $i \in [n]$, then by using a uniformly randomized reshuffling strategy, 
    our convergence rate in the strongly convex case is improved to $\Ocal(1/(nT^2))$, which matches the best-known results in recent works, 
    including \citep{ahn2020sgd,mishchenko2020random}.
    Our latter case (i.e. when $f(\cdot; i)$ is convex) holds for both constant and diminishing learning rates.

    \item[$\mathrm{(c)}$] We prove $\Ocal(T^{-2/3})$-convergence rate in epoch for the constant stepsizes 
    and $\tilde{\Ocal}(T^{-2/3})$-convergence rate\footnote{The notation $\tilde{\Ocal}(\cdot)$ hides all logarithmic terms of the input $(\cdot)$ compared to the standard $\Ocal(\cdot)$ notation.} for the diminishing stepsizes of a general shuffling-type gradient method (Algorithm~\ref{sgd_replacement}) to approximate a stationary point of the nonconvex problem  \eqref{ERM_problem_01}, where $T := K/n$ is number of epochs.
   Our rate is significantly improved over $\Ocal(T^{-1/2})$ rate of the special incremental gradient method in \citep{li2019incremental}.
   To the best of our knowledge, these are the first improved non-asymptotic rates for SGD with shuffling for both constant and diminishing learning rates under standard assumptions. 
   When a uniformly randomized reshuffling strategy is used, our rate is improved by a factor of $n^{1/3}$ to $\Ocal(n^{-1/3}T^{-2/3})$.
    
    \item[$\mathrm{(d)}$]
    We establish asymptotic convergence to a stationary point  under diminishing learning rate scheme. 
    We theoretically and empirically show that the shuffling-type gradient algorithm achieves the best performance with the learning rate $\eta_t = \Ocal(t^{-1/3})$, where $t$ is the epoch counter. 
    Our learning rate closely relates to a ``scheduled'' one, i.e. it is constant within each epoch $t$ and decreases w.r.t. $t$.
    When a uniformly randomized reshuffling strategy is used, our rate is also improved by a factor of $n^{1/3}$.
\end{compactitem}

\paragraph{Comparison.}
Our theoretical results are new and different from existing works in several aspects.
For the nonconvex case, \citep{meng2019convergence} only proves $\Ocal(1/\sqrt{nT} + \log(n)/n)$-convergence rate to a neighborhood of a stationary point of a randomized reshuffling variant, \citep{li2019incremental}  shows $\Ocal(T^{-1/2})$ convergence rate under the bounded subgradients and weak convexity conditions for an incremental subgradient variant. 
Our convergence rate is $\Ocal(T^{-2/3})$ in epoch,  and hence is $T^{-1/6}$ factor  better  than \citep{li2019incremental} but using the smoothness of $f(\cdot; i)$ instead of a weak convexity condition. 
Compared to standard SGD, our rate is $\Ocal(n^{2/3}K^{-2/3})$ in the total of iterations, while the rate of SGD is $\Ocal(K^{-1/2})$.
Our bound is only better than SGD if $n < \Ocal( K^{1/4} )$ under any shuffling strategy.
If a randomized reshuffling strategy is used, our rate is improved to $\Ocal(n^{1/3}K^{-2/3})$.
This rate is better than SGD if $n < \Ocal(K^{1/2})$.
Note that our results and the ones of standard SGD are using different assumptions which may lead to the dependency on some constants for both bounds. 
In theory, shuffling gradient methods seem not better than SGD when $n$ is large.
However, these methods are often implemented in practice, especially in machine learning, as we have mentioned earlier.
Our detailed discussion on the comparison with SGD is given in Remark~\ref{rem_nonconvex}.
Further comparison between randomized reshuffling methods and SGD, GD, and other deterministic shuffling schemes for strongly convex problems can be found, e.g., in \citep{haochen2018random,Safran2019HowGoodSGDShuffling}.

\paragraph{Related work.} 
Let us briefly review the most related works to our methods in this paper.
The random shuffling-type method has been empirically studied in early works such as \citep{bottou2009curiously} and also discussed in \citep{bottou2012stochastic}. 
Its cyclic variant, known as an incremental gradient method was proposed even much earlier, see \citep{nedic2001incremental}, where the convergence analysis was given in \citep{nedic2001convergence} for a subgradient variant, and in \citep{gurbuzbalaban2015convergence} for gradient  variants.
These results are only for convex problems.
Other incremental gradient variants can be found, e.g., in \citep{Defazio2014,defazio2014finito} known as SAGA-based methods.
Our method is more general since it covers different shuffling variants in both deterministic and randomized worlds.
When a randomized reshuffling strategy is used, we can improve our results to match the best known rates in both convex and nonconvex settings.

In \citep{gurbuzbalaban2015random}, the authors showed that if $T$ is large, the randomized shuffling gradient method asymptotically converges with $\Ocal(T^{-2})$ rate under a proper stepsize.
However, this rate was only shown for strongly convex problems with bounded gradient/sequence, smoothness, and Lipschitz Hessian.
These assumptions all together are very restrictive to hold in practice. 
Under the same conditions, \citep{haochen2018random} improved the convergence rate to  $\Ocal((nT)^{-2} + T^{-3})$ non-asymptotically, but in the regime of $T/\log(T) \geq \Ocal(n)$.  
Another related work is \citep{nagaraj2019sgd}, which achieves $\tilde{\Ocal}(1/(nT^2))$ convergence rates without Lipschitz Hessian when $T$ is above some order of the condition number.
Such a paper still requires $F$ to have uniformly bounded gradient on its domain.
Recently, an $\Omega(T^{-2} + n^2T^{-3})$ lower bound is proved in \citep{Safran2019HowGoodSGDShuffling} under the same assumptions as \citep{haochen2018random}.
Another $\Omega(nT^{-2})$ shaper lower bound  is recently established in \citep{rajput2020closing} when $n > \Ocal(T)$.

In \citep{ying2017convergence}, the authors replaced  the i.i.d. sampling scheme by a randomized reshuffling strategy and established that variance reduced methods such as SAGA and SVRG still have a linear convergence rate for strongly convex problems but using an  unusual energy function.
Unfortunately, it is unclear how to transform such a criterion to standard convergence criteria such as loss residuals or solution distances.
The stochastic gradient method and its variance-reduced variants (e.g., SAG \citep{SAG}, SAGA \citep{Defazio2014}, SVRG \citep{SVRG}, and SARAH \citep{Nguyen2017sarah}) for solving \eqref{ERM_problem_01} under strong convexity and smoothness structures usually have theoretically  linear convergence rates compared to a sublinear rate in shuffling gradient variants. 
These variance-reduced techniques have also been widely exploited  in nonconvex settings, but often require full gradient evaluations due to the use of double loops \citep{Pham2019,Reddi2016a,Tran-Dinh2019a}.
However, these algorithms are different from the standard shuffling-type gradient method we study in this paper.

In \citep{shamir2016without}, a convergence rate to a neighborhood of the optimal value of an SGD variant using a without-replacement sampling strategy is studied for general convex problems.
Clearly, this type of convergence is different from ours, and requires $n$ to be large to get a suitable bound.
If  \eqref{ERM_problem_01} is generalized linear and strongly convex, then a faster $\Ocal(\log(K)/K)$ rate is achieved.
Another recent work is \citep{meng2019convergence} which considers different distributed SGD variants with shuffling for  strongly convex, general convex, and nonconvex problems.
The authors can only show convergence to a neighborhood of an optimal solution or a stationary point as in  \citep{shamir2016without}.
In addition, their rates are much slower than existing results for the strongly convex case, and also slower than ours, while requiring stronger assumptions.
After the first draft of this paper was online, \citep{mishchenko2020random} have made a step further by improving the convergence rates of the randomized reshuffling variant for the strongly convex case, but require convexity of each component function $f(\cdot; i)$.
Moreover, they only focus on constant stepsize, while our results cover both constant and diminishing stepsizes.

\paragraph{Paper outline.}
The rest of this paper is organized as follows.
Section~\ref{sec:RSGD} describes our general shuffling gradient algorithm to solve \eqref{ERM_problem_01}. We state our main assumptions and provides necessary mathematical tools in Section~\ref{sec:basic_tools}.
Sections~\ref{sec_analysis_01} and \ref{sec_analysis_02} analyze convergence for the nonconvex and convex cases, respectively.
Several numerical experiments are presented in Section \ref{sec_experiment}. For the sake of presentation, all details and proofs are deferred to Appendix.

\section{The Shuffling-Type Gradient Algorithm and Technical Lemmas}\label{sec:RSGD}
Let us first describe our generic shuffling-type gradient algorithm for \eqref{ERM_problem_01}.
Next, we state two standard assumptions imposed on \eqref{ERM_problem_01}, which will be used in this paper.
Finally, we prove two technical lemmas that serve as key steps for our convergence analysis.

\subsection{The generic shuffling-type gradient algorithm}\label{subsec:algorithm}
Shuffling-type gradient  methods are widely used in practice due to their efficiency \citep{bottou2009curiously}. 
Moreover, these methods have been investigated in many recent papers, including \citep{gurbuzbalaban2015random,haochen2018random,nagaraj2019sgd}.  
In this paper, we analyze convergence rates for a wide class of shuffling-type schemes to solve \eqref{ERM_problem_01} in both convex and nonconvex settings as described in Algorithm~\ref{sgd_replacement}.

\begin{algorithm}[hpt!]
   \caption{(Generic Shuffling-Type Gradient Algorithm for Solving \eqref{ERM_problem_01})}\label{sgd_replacement}
\begin{algorithmic}[1]
   \STATE {\bfseries Initialization:} Choose an initial point $\tilde{w}_0\in\dom{F}$.
   \FOR{$t=1,2,\cdots,T $}
   \STATE Set $w_0^{(t)} := \tilde{w}_{t-1}$;
   \STATE Generate any permutation  $\pi^{(t)}$ of $[n]$ (either deterministic or random);
   \FOR{$i = 1,\cdots, n$}
    \STATE Update $w_{i}^{(t)} := w_{i-1}^{(t)} - \eta_i^{(t)} \nabla f ( w_{i-1}^{(t)} ; \pi^{(t)} ( i ) )$; 
   \ENDFOR
   \STATE Set $\tilde{w}_t := w_{n}^{(t)}$;
   \ENDFOR
\end{algorithmic}
\end{algorithm} 

Note that $\pi^{(t)}(i)$ is the $i$-th element of $\pi^{(t)}$ for $i \in [n]$. 
Each outer iteration $t$ of Algorithm~\ref{sgd_replacement} can be counted for one epoch.
The inner loop updates the iterate sequence $\sets{w_i^{(t)}}_{i=1}^{n}$ using only one component per iteration as in SGD by shuffling the objective components.
Our analysis will be done in epoch-wise (i.e. the convergence guarantee is on $F(\tilde{w}_t) - F_{*}$ evaluated at the outer iterate $\tilde{w}_t$ instead of $w_i^{(t)}$).
Depending on the choice of $\pi^{(t)}$ we obtain different variants, especially the following methods:
\begin{compactitem}
\item If $\pi^{(t)} = \sets{1,2,\cdots, n}$ or some fixed permutation of $\sets{1,2,\cdots,n}$ for all epochs $t$, then Algorithm~\ref{sgd_replacement} is equivalent to a cyclic gradient method.
This method can also be viewed as the incremental gradient scheme studied in \citep{nedic2001incremental}, and recently in \citep{li2019incremental}.
\item If $\pi^{(t)}$ is randomly generated one time and repeatedly used at each iteration $t$, then Algorithm~\ref{sgd_replacement} becomes a single shuffling variant \citep{Safran2019HowGoodSGDShuffling}.
\item If $\pi^{(t)}$ is randomly generated at each epoch $t$, then  Algorithm~\ref{sgd_replacement} reduces to a randomized reshuffling scheme, broadly used in practice, see, e.g., \citep{JMLR:v18:17-632}.
\end{compactitem}
The randomized reshuffling schemes have been studied, e.g., in \citep{gurbuzbalaban2015random,haochen2018random,nagaraj2019sgd}, but their convergence analysis has mainly been investigated for the strongly convex case and often under a strong set of assumptions.

\subsection{Model assumptions}\label{subsec:assumptions}
Our analysis throughout the paper relies on the following standard assumptions of \eqref{ERM_problem_01}.
\begin{ass}\label{ass_basic}
Assume that problem \eqref{ERM_problem_01} satisfies the following conditions:
\begin{enumerate}[$($i$)$] 
\item $\dom{F} := \sets{x \in \R^d : F(x) < +\infty} \neq\emptyset$ and $F_{*} := \inf_{w\in\R^d}F(w) > -\infty$.
\item $f(\cdot; i)$ is $L$-smooth for all $i \in [n]$, i.e. there exists a constant $L \in (0, +\infty)$ such that:
\begin{equation}\label{eq:Lsmooth_basic}
\norms{ \nabla f(w;i) - \nabla f(\hat{w} ;i)} \leq L \norms{ w - \hat{w}}, \quad \forall w, \hat{w} \in \dom{F}. 
\end{equation}
\end{enumerate}
\end{ass}
Note that Assumption~\ref{ass_basic}(\textit{i}) is required in any algorithm to guarantee the well-definedness of \eqref{ERM_problem_01}.
Assumption~\ref{ass_basic}({\emph{ii}}) is standard in gradient-type methods.
Hence, we refer to Assumption~\ref{ass_basic} as  the standard assumption required throughout the paper.
Assumption \ref{ass_basic}(ii) implies that the objective function $F$ is also $L$-smooth. 
Moreover, as proven in \citep{nesterov2004}, we have 
\begin{equation}\label{eq:Lsmooth}
F(w) \leq F(\hat{w}) + \iprods{\nabla F(\hat{w}), w - \hat{w}} + \frac{L}{2}\norms{w - \hat{w}}^2, \quad \forall w, \hat{w} \in \dom{F}.
\end{equation}
Apart from Assumption~\ref{ass_basic}, we also require the following one in most of our results.
\begin{ass}\label{ass_general_bounded_variance}
There exists two constants $\Theta \in [0, +\infty)$ and $\sigma \in (0, +\infty)$ such that
\begin{equation}\label{eq:general_bounded_variance}
    \frac{1}{n} \sum_{i=1}^n \norms{\nabla f(w; i) - \nabla F(w)}^2 \leq \Theta \| \nabla F(w) \|^2 \ + \ \sigma^2, \quad \forall w\in\dom{F}. 
\end{equation}
\end{ass}
Note that if $\Theta =0$, then Assumption~\ref{ass_general_bounded_variance} reduces to the standard bounded variance assumption $\mathbb{E}_i\big[\norms{ f(w; i) - \nabla{F}(w) }^2\big] \leq \sigma^2$, which is often used in nonconvex problems, see, e.g., \citep{ghadimi2013stochastic,Pham2019}.

\subsection{Technical lemmas for convergence analysis}\label{sec:basic_tools}
The following two lemmas provide key estimates for our convergence analysis in this paper.
We first state them here and provide their proof in Appendix~\ref{subsec:general_results}.

\begin{lem}\label{lem_general_framework}
Let $\sets{Y_t}_{t\geq 1}$ be a nonnegative sequence in $\R$ and $q$ be a positive integer number.
Let  $\rho > 0$ and $D > 0$ be two given constants and $0 < \eta_t \leq \frac{1}{\rho}$ be given for all $t \geq 1$.
Assume that, for all $t \geq 1$, we have
\begin{equation}\label{eq:induc_ineq}
    Y_{t+1}  \leq (1 - \rho \cdot \eta_t) Y_{t} +  D \cdot \eta_t^{q+1}.  
\end{equation}
If we choose $\eta_t := \frac{q}{\rho(t + \beta)}$ for all $t\geq 1$, where $\beta \geq q-1$, then we have 
\begin{equation}\label{eq_rate_t2}
    Y_{t+1} \leq \frac{\beta \cdots (\beta - q + 1)}{(t+\beta - q+1) \cdots (t+\beta)}Y_1 + \frac{q^{q+1}D\log(t+\beta)}{\rho^{q+1}(t+\beta-q+1)\cdots (t+\beta)}.
\end{equation}
If we choose $\eta_t := \eta \in (0, \rho^{-1})$ for all $t\geq 1$, then we have 
\begin{equation}\label{eq_rate_t3}
 Y_{t+1} \leq (1 - \rho \eta)^{t} Y_1 + \frac{D\eta^{q}[ 1 - (1 - \rho \eta)^{t}]}{\rho} \leq Y_1\exp( -\rho\eta t) + \frac{D\eta^q}{\rho}.
\end{equation}
\end{lem}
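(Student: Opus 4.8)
The plan is to handle the two stepsize choices separately, both via elementary induction/telescoping on the recursion \eqref{eq:induc_ineq}. For the constant stepsize $\eta_t \equiv \eta$, the recursion reads $Y_{t+1} \leq (1-\rho\eta)Y_t + D\eta^{q+1}$; unrolling it gives $Y_{t+1} \leq (1-\rho\eta)^t Y_1 + D\eta^{q+1}\sum_{j=0}^{t-1}(1-\rho\eta)^j$. Since $0 < \eta < \rho^{-1}$ we have $0 < 1-\rho\eta < 1$, so the geometric sum equals $\frac{1-(1-\rho\eta)^t}{\rho\eta}$, which yields the first bound in \eqref{eq_rate_t3}. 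The second (looser) bound follows by dropping $-(1-\rho\eta)^t \leq 0$ in the bracket and using $1-x \leq e^{-x}$ with $x = \rho\eta$, giving $(1-\rho\eta)^t \leq \exp(-\rho\eta t)$. This part is routine.

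For the diminishing stepsize $\eta_t = \frac{q}{\rho(t+\beta)}$, the key is to verify that \eqref{eq:induc_ineq} becomes
\[
Y_{t+1} \leq \Bigl(1 - \frac{q}{t+\beta}\Bigr) Y_t + \frac{q^{q+1}D}{\rho^{q+1}(t+\beta)^{q+1}}.
\]
Note $\beta \geq q-1$ ensures $1 - \frac{q}{t+\beta} \geq 0$ for all $t \geq 1$, so the coefficients are nonnegative (and $\eta_t \leq \frac{1}{\rho}$ also holds, consistent with the hypothesis). The natural approach is to multiply through by an integrating factor. Observe that $1 - \frac{q}{t+\beta} = \frac{t+\beta-q}{t+\beta}$, so defining $a_t := \frac{(t+\beta-1)(t+\beta-2)\cdots(t+\beta-q)}{\text{(const)}}$ — more precisely using the falling-factorial product $P_t := (t+\beta)(t+\beta-1)\cdots(t+\beta-q+1)$ — one gets the telescoping identity $P_{t}\cdot\frac{t+\beta-q}{t+\beta} = P_{t-1}$ shifted appropriately. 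Multiplying the recursion by $P_t$ (or the appropriate product) converts it into $P_t Y_{t+1} \leq P_{t-1}Y_t + \frac{q^{q+1}D}{\rho^{q+1}}\cdot\frac{P_t}{(t+\beta)^{q+1}}$, and summing from $1$ to $t$ telescopes the first terms down to $P_0 Y_1 = \beta(\beta-1)\cdots(\beta-q+1) Y_1$.

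The main obstacle — and the only nontrivial estimate — is bounding the accumulated error term $\sum_{s=1}^{t} \frac{P_s}{(s+\beta)^{q+1}}$. Since $P_s = (s+\beta)(s+\beta-1)\cdots(s+\beta-q+1) \leq (s+\beta)^q$, each summand is at most $\frac{1}{s+\beta}$, so the sum is at most $\sum_{s=1}^{t}\frac{1}{s+\beta} \leq \log(t+\beta)$ (comparing with $\int_{0}^{t}\frac{dx}{x+\beta}$, using $\beta \geq q-1 \geq 0$; one may need the mild bookkeeping $\sum_{s=1}^t \frac{1}{s+\beta} \leq \int_0^t \frac{dx}{x+\beta} = \log\frac{t+\beta}{\beta} \leq \log(t+\beta)$ when $\beta \geq 1$, with a trivial adjustment if $\beta < 1$). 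Dividing the telescoped inequality through by $P_t = (t+\beta)(t+\beta-1)\cdots(t+\beta-q+1)$ then gives exactly \eqref{eq_rate_t2}, since $\frac{P_0}{P_t} = \frac{\beta\cdots(\beta-q+1)}{(t+\beta-q+1)\cdots(t+\beta)}$ and the error becomes $\frac{q^{q+1}D\log(t+\beta)}{\rho^{q+1}(t+\beta-q+1)\cdots(t+\beta)}$. I would present the diminishing-stepsize case first in full detail (it is the substantive one), then dispatch the constant case in a couple of lines.
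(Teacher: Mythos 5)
Your proposal is correct and follows essentially the same route as the paper: the constant-stepsize case is handled identically via the geometric sum, and your integrating-factor telescoping with $P_t = (t+\beta)\cdots(t+\beta-q+1)$ is just a repackaging of the paper's direct unrolling, since $\prod_{j=i+1}^{t}(1-\rho\eta_j) = P_i/P_t$ and the telescoped/divided inequality reproduces the paper's coefficients $C_t = P_0/P_t$ and error terms exactly. The bound $\sum_{s=1}^{t} P_s/(s+\beta)^{q+1} \leq \sum_{s=1}^{t}\tfrac{1}{s+\beta} \leq \log(t+\beta)$ matches the paper's estimate (including the same mild imprecision for very small $\beta$, which you even flag).
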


\begin{lem}\label{lem_general_framework_02}
Let $\sets{Y_t}_{t\geq 1}$ and $\sets{Z_t}_{t\geq 1}$ be two nonnegative sequences in $\R$ and $m$ and $q$ be two positive numbers such that $q > m$.
For positive constants $\rho, \alpha, \beta, \gamma$, and $D$, assume that
\begin{equation}\label{eq_lem_general_framework_02}
    Y_{t+1}  \leq Y_{t} - \rho \eta_t^{m} \cdot Z_{t} + \eta_t^{q} \cdot D, \quad \text{where}\quad \eta_t := \frac{\gamma}{(t + \beta)^{\alpha}}\quad\text{and}\quad \alpha m \leq \frac{1}{2}.
\end{equation}
Suppose that $Y_t \leq C + H \log (t + \theta)$ for some $C > 0$, $H \geq 0$, $\theta > 0$, and $1 + \theta - \beta > (1-\alpha m)e^{\frac{\alpha m}{1-\alpha m}}$ for all $t\geq 1$,  $($where $e$ is the natural number$)$.
Then, we have
\begin{equation}\label{eq:key_concl}
\hspace{-0.5ex}
\arraycolsep=0.2em
\begin{array}{lcl}
\dfrac{1}{T} {\displaystyle\sum_{t = 1}^{T}} Z_{t} {~}& \leq & \dfrac{1}{T} \bigg[\dfrac{(1+\beta)^{\alpha m} Y_1}{\rho \gamma^m} + \dfrac{C (T - 1 + \beta)^{\alpha m}}{2 \rho \alpha m \gamma^m} + \dfrac{H (T - 1 + \beta)^{\alpha m} \log(T + \theta)}{2\rho \alpha m \gamma^m} \bigg] \vspace{1ex}\\
& & + {~}  \dfrac{D \gamma^{q - m}}{\rho}\cdot \dfrac{A(T)}{T},
\end{array}
\hspace{-2ex}
\end{equation}
where
\begin{equation*}
A(T) := \left\{\begin{array}{ll}
\log(T+\beta) - \log(\beta) &\text{if}~\alpha(q - m) = 1, \vspace{1ex}\\
\frac{(T + \beta)^{1 - \alpha(q - m)}}{1 - \alpha ( q - m)} &\text{otherwise}.
\end{array}\right.
\end{equation*}
\end{lem}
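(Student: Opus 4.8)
\textbf{Proof proposal for Lemma~\ref{lem_general_framework_02}.}

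The plan is to turn the one-step descent inequality \eqref{eq_lem_general_framework_02} into a telescoping bound on $\sum_{t=1}^T Z_t$ and then control the resulting coefficients using the growth hypothesis $Y_t \le C + H\log(t+\theta)$ and the decay profile of $\eta_t$. First I would rearrange \eqref{eq_lem_general_framework_02} to isolate $Z_t$:
\[
\rho \eta_t^m Z_t \le Y_t - Y_{t+1} + D\eta_t^q,
\]
hence $Z_t \le \frac{1}{\rho\eta_t^m}(Y_t - Y_{t+1}) + \frac{D}{\rho}\eta_t^{q-m}$. Summing over $t = 1,\dots,T$ and using Abel summation on the first term, I would collect $\sum_t (Y_t - Y_{t+1})/\eta_t^m$ into a boundary term $Y_1/\eta_1^m$ (up to sign) plus a sum $\sum_{t=2}^T Y_t\big(\frac{1}{\eta_t^m} - \frac{1}{\eta_{t-1}^m}\big)$; since $\eta_t$ is decreasing, the increments $\frac{1}{\eta_t^m}-\frac{1}{\eta_{t-1}^m} = \frac{1}{\gamma^m}\big[(t+\beta)^{\alpha m}-(t-1+\beta)^{\alpha m}\big]$ are nonnegative, so I can upper bound each $Y_t$ there by $C + H\log(t+\theta)$.

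The second block of work is estimating $\sum_{t=2}^T \big(C + H\log(t+\theta)\big)\big[(t+\beta)^{\alpha m}-(t-1+\beta)^{\alpha m}\big]$. Using concavity of $x \mapsto x^{\alpha m}$ (valid since $\alpha m \le \tfrac12 < 1$), each bracket is at most $\alpha m\,(t-1+\beta)^{\alpha m - 1} \le \alpha m\,(t+\beta)^{\alpha m -1}$, but it is cleaner to bound the telescoping sum of brackets directly by $(T+\beta)^{\alpha m}$ after factoring out the monotone weight $C + H\log(T+\theta)$; more carefully, I would write the sum as a Riemann-type sum dominated by an integral $\int_1^T (C + H\log(x+\theta))\,d\big((x+\beta)^{\alpha m}\big)$ and integrate by parts, which produces the $\frac{1}{2\rho\alpha m \gamma^m}$-type prefactors (the factor $\tfrac12$ coming from wanting a clean bound and the constraint $\alpha m \le \tfrac12$) together with the $(T-1+\beta)^{\alpha m}$ and $(T-1+\beta)^{\alpha m}\log(T+\theta)$ terms appearing in \eqref{eq:key_concl}. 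This is where the technical condition $1+\theta-\beta > (1-\alpha m)e^{\alpha m/(1-\alpha m)}$ enters: it is exactly what guarantees that the boundary/lower-order terms from the integration by parts are absorbed with the stated constants rather than producing extra additive contributions, so the whole first block of \eqref{eq:key_concl} holds with the displayed coefficients $\frac{(1+\beta)^{\alpha m}Y_1}{\rho\gamma^m}$, $\frac{C(T-1+\beta)^{\alpha m}}{2\rho\alpha m\gamma^m}$, and $\frac{H(T-1+\beta)^{\alpha m}\log(T+\theta)}{2\rho\alpha m\gamma^m}$.

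Finally I would handle the noise term $\frac{D}{\rho}\sum_{t=1}^T \eta_t^{q-m} = \frac{D\gamma^{q-m}}{\rho}\sum_{t=1}^T (t+\beta)^{-\alpha(q-m)}$. Bounding this sum by the integral $\int_0^T (x+\beta)^{-\alpha(q-m)}\,dx$ gives precisely $A(T)$: the logarithmic case $\log(T+\beta)-\log(\beta)$ when $\alpha(q-m)=1$, and $\frac{(T+\beta)^{1-\alpha(q-m)}}{1-\alpha(q-m)}$ otherwise (the case $\alpha(q-m)>1$ is also covered since then the partial sums are bounded and the displayed expression, while possibly not the tightest, still dominates them). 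Dividing the entire accumulated bound by $T$ yields \eqref{eq:key_concl}. I expect the main obstacle to be the bookkeeping in the integration-by-parts step for the $\log$-weighted telescoping sum — in particular verifying that the condition on $1+\theta-\beta$ is the right one to keep all lower-order terms under control and to ensure the weights $C+H\log(t+\theta)$ stay positive and monotone over the whole range $t\ge 1$; the rest is routine estimation of monotone sums by integrals.
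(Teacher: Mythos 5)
Your proposal follows essentially the same route as the paper's proof: isolate $Z_t$ from \eqref{eq_lem_general_framework_02}, apply Abel summation, bound $Y_{t+1}$ by $C+H\log(t+1+\theta)$ against the nonnegative increments of $(t+\beta)^{\alpha m}$, and compare the resulting sums with integrals; the noise term is handled identically. Two points of bookkeeping need correcting, however. First, the hypothesis $1+\theta-\beta > (1-\alpha m)e^{\alpha m/(1-\alpha m)}$ is not there to absorb boundary terms from an integration by parts: in the paper it is used (via an auxiliary calculus lemma) solely to guarantee that $t\mapsto \log(t+1+\theta)/(t+\beta)^{1-\alpha m}$ is monotonically decreasing on $[0,+\infty)$, which is what legitimizes replacing the sum $\sum_{t=1}^{T-1}\frac{C+H\log(t+1+\theta)}{(t+\beta)^{1-\alpha m}}$ by the corresponding integral. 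The factor $\frac{1}{2}$ and the $\frac{1}{\alpha m}$ in the prefactors come instead from the elementary inequality $(s+1)^{\alpha m}-s^{\alpha m}\le \frac{1}{2 s^{1-\alpha m}}$ (valid precisely because $\alpha m\le\frac{1}{2}$) applied to the Abel increments, followed by $\int_0^{T-1}(t+\beta)^{\alpha m-1}\,dt\le\frac{(T-1+\beta)^{\alpha m}}{\alpha m}$; your concavity bound $\alpha m (t-1+\beta)^{\alpha m -1}$ would produce different constants and should be dropped in favor of this sharper estimate. Second, your parenthetical claim that the case $\alpha(q-m)>1$ is ``also covered'' is false as stated: there $1-\alpha(q-m)<0$, so the displayed $A(T)$ is negative and cannot dominate the positive partial sums. (The paper's own derivation implicitly assumes $\alpha(q-m)\le 1$, which holds in all its applications, so this is a shared blemish rather than a defect peculiar to your argument.) With these two corrections the proof goes through as you outline it.
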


Lemmas~\ref{lem_general_framework} and \ref{lem_general_framework_02} are independent of interest, and play an important role not only for deriving the results for  shuffling-type gradient methods but also for applying our new convergence analysis to the standard SGD algorithm (see Appendix~\ref{sec_standard_sgd} for more details). 

\begin{rem}[\textbf{Types of guarantee}]\label{re:type_of_convergence}
Since we can choose the permutation $\pi^{(t)}$ of Algorithm~\ref{sgd_replacement} either deterministically or randomly, our bounds in the sequel will hold either deterministically or in expectation, respectively.
Without loss of generality, we write these results in the context of expectation taken overall the randomness generated by the algorithm.
\end{rem}

\section{Convergence Analysis for  Convex Case}\label{sec_analysis_01}
In this section, we mainly consider two cases. 
In the first case, we only assume that the sum function $F$ is strongly convex while some components $f(\cdot; i)$ for $i\in [n]$ are not necessarily convex.
In the second case, we assume that $F$ is strongly convex and each $f(\cdot; i)$ for $i\in [n]$ is also convex.
Let us state the strong convexity assumption of $F$ as follows.

\begin{ass}[$\mu$-strong convexity]
\label{ass_stronglyconvex}
The objective function $F$ of \eqref{ERM_problem_01}  is $\mu$-strongly convex on $\dom{F}$, i.e. there exists a constant $\mu \in (0, +\infty)$ such that
\begin{equation}\label{eq:stronglyconvex_00}
F(w) \geq F(\hat{w}) + \iprods{\nabla F(\hat{w}), w - \hat{w}} + \frac{\mu}{2}\norms{w - \hat{w}}^2,\quad \forall w, \hat{w} \in \dom{F}.
\end{equation}
\end{ass}
It is well-known from the literature \citep{nesterov2004,Bottou2018} that Assumption \ref{ass_stronglyconvex} implies the existence and uniqueness of the optimal solution $w_{*}$ of \eqref{ERM_problem_01}, and
\begin{equation}\label{eq:stronglyconvex}
\frac{\mu}{2}\norms{w - w_{*}}^2 \leq  F(w) - F(w_{*})  \leq \frac{1}{2\mu}\norms{\nabla F(w)}^2, \quad \forall w \in \dom{F}. 
\end{equation}
As we have mentioned above, Assumption~\ref{ass_stronglyconvex} only requires the sum function $F$ to be strongly convex, but some components $f(\cdot; i)$ can even be nonconvex.

Under Assumption~\ref{ass_stronglyconvex}, since problem \eqref{ERM_problem_01} has a unique optimal solution $w_{*}$, we introduce the following variance of $F$ at $w_{*}$:
\begin{equation}\label{defn_finite}
\sigma_{*}^2 := \frac{1}{n}\sum_{i=1}^{n}\Vert \nabla{f}(w_{*}; i) \Vert^2 \in [0, +\infty).
\end{equation}
Now, we state the convergence of Algorithm~\ref{sgd_replacement} with a constant learning rate in Theorem~\ref{thm_main_result_02_const}, whose proof is given in Appendix~\ref{subsec:appendix_Th12_proof}.
\begin{thm}\label{thm_main_result_02_const}
Assume that Assumptions \ref{ass_basic} and \ref{ass_stronglyconvex} hold $($but some $f(\cdot; i)$ for $i\in [n]$ are not necessarily convex$)$, $\sigma_{*}^2$ is defined by \eqref{defn_finite}, and $\kappa := \frac{L}{\mu}$ is the condition number of $F$.
Let $\sets{ w_i^{(t)}}_{t=1}^T$ be generated by Algorithm~\ref{sgd_replacement} after $T$ epochs with a constant stepsize $\eta_i^{(t)} := \frac{6\log(T)}{\mu n T}$ and any shuffling strategy $\pi^{(t)}$. 
Then, for $T \geq 1$ such that $T  \geq 12\kappa^2\log(T)$, we have
\begin{align}\label{eq:scvx_bound1_const}
\hspace{-0.0ex}
\mathbb{E}\big[ F(\tilde{w}_T) - F(w_{*}) \big] & \leq \frac{1}{T^2} \bigg[  \big( F(\tilde{w}_0) - F(w_{*}) \big) \, + \, \frac{54(\mu^2 + L^2)\sigma_{*}^2 \log(T)^2}{\mu^3} \bigg] \nonumber\\
& =  \Ocal\left(\frac{\log(T)^2}{T^2}\right).
\hspace{-1ex}
\end{align}
Assume additionally that $\pi^{(t)}$ is sampled uniformly at random without replacement from $[n]$ and Assumption~\ref{ass_general_bounded_variance} holds.
Then, by choosing  a learning rate $\eta_i^{(t)}  := \frac{4\log(\sqrt{n}T)}{\mu n T}$ for $T \geq 1$ such that $T \geq \frac{8L\sqrt{\Theta/n+1}}{\mu^2}\log(\sqrt{n}T)$, we have
\begin{equation}\label{eq:scvx_bound2_const0}
\mathbb{E}\big[ F(\tilde{w}_t) - F(w_{*}) \big]  \leq   \frac{1}{nT^2}\bigg[  \big[ F(\tilde{w}_0) - F(w_{*}) \big] + \frac{2L^2\sigma^2 \log(\sqrt{n}T)^2}{\mu^3} \bigg].
\end{equation}
Consequently, the convergence rate of $\big\{ \mathbb{E}\big[ F(\tilde{w}_T) - F(w_{*}) \big] \big\}$ in this case is $\Ocal\left(\frac{\log(T)^2}{n T^2}\right)$.

Alternatively, assume additionally that $\pi^{(t)}$ is sampled uniformly at random without replacement from $[n]$ and each $f(\cdot; i)$ is convex for $i \in [n]$.
Then, by choosing  a constant learning rate $\eta_i^{(t)}  := \frac{2\log(\sqrt{n}T)}{\mu n T}$ for $T \geq 1$ such that $\log(T\sqrt{n}) \leq \frac{T}{2}\min\big\{ 1, \frac{\sqrt{5}-1}{\kappa}\big\}$, we have
\begin{equation}\label{eq:scvx_bound2_const}
\mathbb{E}\big[ F(\tilde{w}_t) - F(w_{*}) \big]  \leq  \frac{L}{2} \mathbb{E}\big[ \norms{\tilde{w}_t - w_{*}}^2 \big] \leq  \frac{L}{2nT^2}\bigg[ \mathbb{E}\big[ \norms{\tilde{w}_{0} - w_{*}}^2 \big] + \frac{8L\sigma_{*}^2 \log(\sqrt{n}T)^2}{3 \mu^3} \bigg].
\end{equation}
Consequently, the convergence rate of both $\big\{ \mathbb{E}\big[ F(\tilde{w}_T) - F(w_{*}) \big] \big\}$ and $\big\{ \mathbb{E}\big[ \norms{\tilde{w}_T - w_{*}}^2 \big] \big\}$ in this case is $\Ocal\left(\frac{\log(T)^2}{n T^2}\right)$.
\end{thm}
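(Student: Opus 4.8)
The plan is to derive a one-epoch recursion for the appropriate Lyapunov quantity and then invoke Lemma~\ref{lem_general_framework} with $q=2$. Concretely, I would track $Y_t := F(\tilde w_{t-1}) - F(w_*)$ in the first two cases (and $Y_t := \tfrac12\mathbb{E}[\norms{\tilde w_{t-1}-w_*}^2]$ in the third, convex-component case, using \eqref{eq:stronglyconvex} to pass between loss residual and squared distance). The first step is the standard per-inner-step descent estimate: starting from the $L$-smoothness inequality \eqref{eq:Lsmooth} applied along the inner updates $w_i^{(t)} = w_{i-1}^{(t)} - \eta\,\nabla f(w_{i-1}^{(t)};\pi^{(t)}(i))$, sum over $i=1,\dots,n$ within one epoch. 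Writing the aggregate inner step as $\tilde w_t - \tilde w_{t-1} = -\eta \sum_{i=1}^n \nabla f(w_{i-1}^{(t)};\pi^{(t)}(i))$, I would add and subtract $\eta n \nabla F(\tilde w_{t-1})$ to split the progress into a ``full-gradient'' term and an error term measuring how far the iterates $w_{i-1}^{(t)}$ drift from $\tilde w_{t-1}$ across the epoch.

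The second step is bounding that drift: using $L$-smoothness and a telescoping/induction argument over the inner loop, $\norms{w_{i-1}^{(t)} - \tilde w_{t-1}}$ is controlled by $\eta$ times partial sums of $\norms{\nabla f(\tilde w_{t-1};\pi^{(t)}(j))}$, which in turn (via smoothness and the definition of $\sigma_*^2$ or Assumption~\ref{ass_general_bounded_variance}) is controlled by $\norms{\nabla F(\tilde w_{t-1})}^2$ plus a variance term. This produces an epoch recursion of the schematic form $Y_{t+1} \le (1 - c_1 \mu n \eta) Y_t + c_2 n^3 \eta^3 (\text{variance})$ after using $\norms{\nabla F(\tilde w_{t-1})}^2 \ge 2\mu Y_t$ from \eqref{eq:stronglyconvex} to absorb the quadratic-in-gradient terms into the contraction, provided $n\eta$ is small enough relative to $1/(\mu\kappa^2)$ — this is where the stepsize conditions like $T \ge 12\kappa^2\log T$ enter. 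Relabeling $\rho \sim \mu$ and $D \sim n^3(\text{variance})$, and setting $\eta_i^{(t)} = \eta$ constant with $\eta = \Theta(\log(\cdot)/(\mu n T))$, I apply the constant-stepsize branch \eqref{eq_rate_t3} of Lemma~\ref{lem_general_framework} with $q=2$: this yields $Y_{T+1} \le Y_1 \exp(-\rho\eta T) + D\eta^2/\rho$. The exponential factor becomes $\exp(-c\log(\cdot)) = (\cdot)^{-c}$, and choosing the constant in $\eta$ (6, 4, or 2 depending on the case) makes this term $\Ocal(1/T^2)$ or $\Ocal(1/(nT^2))$; the residual $D\eta^2/\rho \sim n^3\cdot\log^2/(n^2T^2)\cdot(1/\mu)\cdot(\text{variance})/\mu^2$ gives exactly the stated $\log^2/(nT^2)$-type bound, after the variance constant collapses to $\sigma_*^2$, $\sigma^2$, or $\sigma_*^2$ respectively.

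For the three cases the only differences are: (i) in the first case, with a general permutation, I bound the sum-of-component-gradients error by $\sigma_*^2$ and smoothness alone, losing a factor $n$ relative to the randomized case, so I keep $D \sim n^3 \sigma_*^2$ and land at $\Ocal(\log^2/T^2)$; (ii) in the second case, the uniform-random-without-replacement sampling lets me take expectation over $\pi^{(t)}$ and exploit that $\mathbb{E}[\sum_i \nabla f(\tilde w_{t-1};\pi^{(t)}(i))] = n\nabla F(\tilde w_{t-1})$ with a variance of the partial sums that scales like $n\cdot(\text{per-sample variance})$ rather than $n^2$ — this is the classical without-replacement variance reduction, and it shaves an extra $n$ off $D$, giving $\Ocal(\log^2/(nT^2))$; (iii) in the convex-component case I instead run the recursion on squared distances, using convexity of each $f(\cdot;i)$ to get a tighter co-coercivity-type bound on the inner drift, which removes the dependence on $\Theta$ and again gives the $1/(nT^2)$ rate with constant $\sigma_*^2$, and then convert back via $F(\tilde w_t)-F(w_*)\le \tfrac{L}{2}\norms{\tilde w_t - w_*}^2$.

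The main obstacle I anticipate is the epoch-error analysis in step two — getting the sharp dependence on $n$ in the coefficient of $\eta^3$, and in particular executing the without-replacement variance computation cleanly (cases (ii) and (iii)), since naive bounds give $n^2$ instead of $n$ and would only prove the weaker $\Ocal(1/T^2)$ rate. A secondary technical nuisance is carefully verifying that the stepsize smallness conditions (the explicit $12\kappa^2\log T$, $\tfrac{8L\sqrt{\Theta/n+1}}{\mu^2}\log(\sqrt n T)$, etc.) are exactly what is needed to (a) ensure $\eta \le 1/\rho$ so Lemma~\ref{lem_general_framework} applies, and (b) absorb all $\norms{\nabla F}^2$-type terms into the contraction without spoiling the constant; this is bookkeeping-heavy but routine once the structure is fixed.
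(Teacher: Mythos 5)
Your proposal follows essentially the same route as the paper: a per-epoch descent inequality from $L$-smoothness plus a drift bound on $\sum_i\norms{w_i^{(t)}-w_0^{(t)}}^2$, collapsed via strong convexity into a recursion $Y_{t+1}\le(1-\rho\eta_t)Y_t+D\eta_t^3$, and then the constant-stepsize branch \eqref{eq_rate_t3} of Lemma~\ref{lem_general_framework} with $q=2$, with the without-replacement variance identity supplying the extra factor of $n$ in cases (ii) and (iii) and co-coercivity of the convex components driving the squared-distance recursion in case (iii). The decomposition, the Lyapunov choices, and the way the stepsize conditions arise all match the paper's proof, so the plan is correct as stated.
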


The condition $\log(T\sqrt{n}) \leq \frac{T}{2}\min\big\{ 1, \frac{\sqrt{5}-1}{\kappa}\big\}$  on $T$ of \eqref{eq:scvx_bound2_const} shows that $\frac{T}{\log(T\sqrt{n})} \geq \Ocal(\kappa)$, where $\kappa := \frac{L}{\mu}$ is the condition number of $F$.
This condition has been shown in previous works, and aligns with recent results in \citep{ahn2020sgd,mishchenko2020random}.
However, unlike \citep{mishchenko2020random}, we have new results stated in \eqref{eq:scvx_bound1_const} and \eqref{eq:scvx_bound2_const0}.

Next, we prove the following result for the strongly convex case using diminishing learning rates.
The detailed proof of this theorem is given in Appendix~\ref{subsec:appendix_Th12_proof}.

\begin{thm}\label{thm_main_result_02}
Assume that Assumptions \ref{ass_basic} and \ref{ass_stronglyconvex} hold $($but some $f(\cdot; i)$ for $i\in [n]$ are not necessarily convex$)$, $\sigma_{*}^2$ is defined by \eqref{defn_finite}, and $\kappa := \frac{L}{\mu}$ is the condition number of $F$.
Let $\sets{ w_i^{(t)}}$ be generated by Algorithm~\ref{sgd_replacement} with $\eta_i^{(t)} := \frac{\eta_t}{n}$ and any shuffling strategy $\pi^{(t)}$ for solving \eqref{ERM_problem_01}. 
Let  $\eta_t$ be updated by  $\eta_t  := \frac{6}{\mu(t + \beta)}$ for all $t \geq 1$, where $\beta \geq 12\kappa^2 - 1$. 
Then, we have
\begin{align}\label{eq:scvx_bound1}
\hspace{-0.5ex}
\arraycolsep=0.2em
\mathbb{E}\big[ F(\tilde{w}_t) - F(w_{*}) \big] & \leq  \dfrac{\beta(\beta - 1)}{(t+\beta)(t+\beta-1)}\bigg[ \big( F(\tilde{w}_0) - F(w_{*}) \big) + \dfrac{216(L^2 + \mu^2)\sigma_{*}^2 \log(t + \beta)}{\mu^3\beta(\beta - 1)} \bigg] \nonumber\\
& =  \Ocal\left(\dfrac{\log(t)}{t^2}\right).
\hspace{-3ex}
\end{align}
If, additionally, $\pi^{(t)}$ is sampled uniformly at random without replacement from $[n]$, $f(\cdot; i)$ $($for all $i \in [n]$$)$ are convex, and $L \leq \frac{\sqrt{5} - 1}{2}$, then by choosing  $\eta_t  := \frac{2}{\mu(t + 1 + 1/n)}$ for $t \geq 1$, we have 
\begin{equation}\label{eq:scvx_bound2}
\hspace{-0.0ex}
\arraycolsep=0.1em
\begin{array}{lcl}
\mathbb{E}\big[ F(\tilde{w}_t) - F(w_{*}) \big] & \leq & \dfrac{L}{2} \mathbb{E}\big[ \norms{\tilde{w}_t - w_{*}}^2 \big] \vspace{0.5ex}\\
& \leq & \dfrac{2L}{n(t+1/n)(t+ 1/n + 1)}\bigg[ \mathbb{E}\big[ \norms{\tilde{w}_0 - w_{*}}^2 \big] + \dfrac{L\sigma_{*}^2\log(t + \beta)}{3\mu^3} \bigg] \vspace{1ex}\\
& = & \Ocal\left(\dfrac{\log(t)}{nt^2}\right).
\end{array}
\hspace{-3ex}
\end{equation}
\end{thm}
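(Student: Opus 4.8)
The plan is to derive, in each of the two settings, a one‑epoch contraction of the exact form required by Lemma~\ref{lem_general_framework} with $q=2$, and then simply invoke that lemma. Concretely, after the index shift $Y_t := \mathbb{E}\big[F(\tilde w_{t-1}) - F(w_*)\big]$ for the first claim (so that $Y_1 = F(\tilde w_0) - F(w_*)$ is deterministic, matching the quantity in \eqref{eq:scvx_bound1}), and $Y_t := \mathbb{E}\big[\norms{\tilde w_{t-1} - w_*}^2\big]$ for the second claim (matching \eqref{eq:scvx_bound2}), I aim to prove $Y_{t+1} \le (1-\rho\eta_t)Y_t + D\eta_t^{3}$ for all $t\ge 1$, where the per‑epoch effective step is $\eta_t$. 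Granting this, the choice $\eta_t = \tfrac{6}{\mu(t+\beta)}$ is exactly $\tfrac{q}{\rho(t+\beta)}$ with $q=2$, $\rho=\tfrac{\mu}{3}$ (respectively $\eta_t = \tfrac{2}{\mu(t+1+1/n)}$ is $\tfrac{q}{\rho(t+\beta)}$ with $q=2$, $\rho=\mu$, $\beta = 1+1/n$); the hypotheses $\beta\ge q-1$ and $0<\eta_t\le 1/\rho$ of Lemma~\ref{lem_general_framework} hold because $\beta \ge 12\kappa^2-1$ forces $\eta_1 \le \tfrac{1}{2\mu\kappa^2}$, and \eqref{eq_rate_t2} yields \eqref{eq:scvx_bound1} (with $D = \Ocal((L^2+\mu^2)\sigma_*^2)$, since $\tfrac{2^3}{(\mu/3)^3} = \tfrac{216}{\mu^3}$), respectively a bound on $\mathbb{E}\big[\norms{\tilde w_t-w_*}^2\big]$ which becomes \eqref{eq:scvx_bound2} after multiplying by $\tfrac{L}{2}$ through \eqref{eq:Lsmooth}.

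The core of the argument is the one‑epoch estimate. Unrolling the inner loop of Algorithm~\ref{sgd_replacement} with $\eta_i^{(t)} = \eta_t/n$ gives $\tilde w_t = \tilde w_{t-1} - \eta_t\nabla F(\tilde w_{t-1}) + \eta_t e_t$, where $e_t := \nabla F(\tilde w_{t-1}) - \tfrac1n\sum_{i=1}^n \nabla f\big(w_{i-1}^{(t)};\pi^{(t)}(i)\big)$ measures the drift of the inner iterates off $\tilde w_{t-1}$. Using $w_{i-1}^{(t)} - \tilde w_{t-1} = -\tfrac{\eta_t}{n}\sum_{j<i}\nabla f\big(w_{j-1}^{(t)};\pi^{(t)}(j)\big)$, the $L$‑smoothness of each $f(\cdot;i)$ from Assumption~\ref{ass_basic}, the crude bound $\norms{\nabla f(w;i)}^2 \le 2L^2\norms{w-w_*}^2 + 2\norms{\nabla f(w_*;i)}^2$, and the fact that $\pi^{(t)}$ is a permutation (so those last terms average to $\sigma_*^2$ of \eqref{defn_finite}), one obtains — after resolving the mild self‑reference, which is possible exactly because $\eta_t$ is small, this being what forces $\beta\ge 12\kappa^2-1$ — a bound of the form $\mathbb{E}\big[\norms{e_t}^2\big] \le c\,L^2\eta_t^2\big(\tfrac{L^2}{\mu}\,\mathbb{E}\big[F(\tilde w_{t-1})-F(w_*)\big] + \sigma_*^2\big)$.

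For the first claim I then apply \eqref{eq:Lsmooth} to the pair $\tilde w_t,\tilde w_{t-1}$, split the cross term $\iprods{\nabla F(\tilde w_{t-1}),e_t}$ and the term $\norms{\nabla F(\tilde w_{t-1})-e_t}^2$ with Young's inequality, invoke the Polyak--\L ojasiewicz inequality $\norms{\nabla F(w)}^2 \ge 2\mu(F(w)-F(w_*))$ implied by \eqref{eq:stronglyconvex}, and absorb the resulting $\Ocal(L^2\eta_t^2)$ and $\Ocal(L^4\eta_t^3/\mu)$ coefficients into the contraction factor using the smallness of $\eta_t$; this gives $Y_{t+1}\le(1-\tfrac{\mu}{3}\eta_t)Y_t + D\eta_t^3$. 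For the second claim (all $f(\cdot;i)$ convex, uniformly random reshuffling) I instead expand $\norms{\tilde w_t-w_*}^2$ directly, use $\iprods{\nabla F(\tilde w_{t-1}),\tilde w_{t-1}-w_*}\ge\mu\norms{\tilde w_{t-1}-w_*}^2$, and control $e_t$ with two extra ingredients: (i) co‑coercivity of the convex $L$‑smooth components, $\norms{\nabla f(w;i)-\nabla f(w_*;i)}^2 \le 2L\big(f(w;i)-f(w_*;i)-\iprods{\nabla f(w_*;i),w-w_*}\big)$, which lets the per‑step gradient magnitudes telescope along the inner loop, and (ii) the variance‑reduction effect of sampling without replacement, which replaces $\sigma_*^2$ by $\sigma_*^2/n$ in expectation over $\pi^{(t)}$; here the hypothesis $L\le\tfrac{\sqrt5-1}{2}$ keeps the residual $\Ocal(L^2\eta_t^2)\norms{\tilde w_{t-1}-w_*}^2$ term below the $\mu\eta_t$ threshold so it is absorbed into $(1-\mu\eta_t)$. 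The outcome is $Y_{t+1}\le(1-\mu\eta_t)Y_t + D\eta_t^3$ with $D = \Ocal(L^2\sigma_*^2/n)$, and Lemma~\ref{lem_general_framework} closes the argument.

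I expect the bottleneck to be exactly the $\mathbb{E}\big[\norms{e_t}^2\big]$ estimate in the second part: extracting the extra factor $1/n$ without a bounded‑gradient assumption requires quantifying how a uniformly random permutation makes the partial sums $\sum_{j<i}\nabla f\big(\tilde w_{t-1};\pi^{(t)}(j)\big)$ concentrate around their mean while the inner‑loop drift stays coupled into the estimate, and then splicing this with the co‑coercivity telescoping of the function‑value gaps along $w_{i-1}^{(t)}$. The deterministic/general estimate behind the first claim, by contrast, is a routine (if lengthy) telescoping; the only genuinely delicate point there is the constant bookkeeping needed to land precisely on the stated $\tfrac{6}{\mu}$, $216$, and $12\kappa^2-1$.
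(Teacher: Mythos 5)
Your proposal is correct and follows essentially the same route as the paper: both parts reduce to a one-epoch recursion $Y_{t+1}\le(1-\rho\eta_t)Y_t+D\eta_t^3$ closed by Lemma~\ref{lem_general_framework} with $q=2$, where the first part combines the smoothness descent step (the paper's Lemma~\ref{le:F_bound}) with the inner-iterate drift bound via $\norms{\nabla f(w;i)}^2\le 2L^2\norms{w-w_*}^2+2\norms{\nabla f(w_*;i)}^2$ and strong convexity (Lemma~\ref{lem_bound_weights}), and the second part expands $\norms{\tilde w_t-w_*}^2$ using co-coercivity of the convex components and the without-replacement variance identity (Lemmas~\ref{le:scvx_key_bound2} and \ref{le:lem_key_rr}). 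Your identification of $\rho=\mu/3$ versus $\rho=\mu$, the origin of the constants $216$, $12\kappa^2-1$, and $\frac{\sqrt5-1}{2}$, and the role of the $1/n$ variance reduction all match the paper's proof.
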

The condition $L \leq \frac{\sqrt{5} - 1}{2}$ in Theorem~\ref{thm_main_result_02} holds without loss of generality.
Indeed, if it does not hold, then by rescaling $f(\cdot; i)$ by $\frac{\sqrt{5} - 1}{2L} \cdot f(\cdot; i)$, we obtain this condition.
Clearly, with a randomized reshuffling strategy, the convergence rate of Algorithm~\ref{sgd_replacement} is better than a general shuffling one.
This rate nearly matches the lower bound in previous works, e.g., in \citep{nagaraj2019sgd,rajput2020closing}.
While our proof of \eqref{eq:scvx_bound1} is new, the proof of \eqref{eq:scvx_bound2} is inspired by \citep{mishchenko2020random}, but it is rather different (see Lemma~\ref{le:scvx_key_bound2} in the appendix). 
Moreover, our bound \eqref{eq:scvx_bound2} is established for a diminishing stepsize $\eta_t$.

Since the total number of iterations is $K := nT$, if we write the convergence rates in terms of $K$, then for any shuffling strategy, we have $\mathbb{E}\big[ F(\tilde{w}_T) - F(w_{*})\big] \leq \Ocal\left( n^2 K^{-2}\right)$.
However, for a randomized reshuffling one, we have $\mathbb{E}\big[ F(\tilde{w}_T) - F(w_{*})\big] \leq \Ocal\left( nK^{-2} \right)$, matching the result in \citep{haochen2018random,mishchenko2020random}.
As mentioned earlier, our assumptions for \eqref{eq:scvx_bound2} are as in \citep{mishchenko2020random} and  weaker than those in \citep{haochen2018random,nagaraj2019sgd}. Here, we use diminishing learning rates in Theorem~\ref{thm_main_result_02} instead of constant ones as in \citep{haochen2018random,mishchenko2020random,nagaraj2019sgd}.
Note that  Algorithm~\ref{sgd_replacement} covers much broader class of algorithms compared to existing methods in the literature.

\begin{remark}[Non-strongly convex case]\label{re:non-strongly_convex}
Similar to \citep{mishchenko2020random}, we can use \eqref{eq:scvx_key_bound2} of Lemma~\ref{le:scvx_key_bound2} to prove the following convergence rate for Algorithm~\ref{sgd_replacement} under only convexity of $f(\cdot; i)$ for all $i\in [n]$.
We state this result as follows without proof.
\begin{equation*}
\mathbb{E}\big[F(\hat{w}_T) - F(w_{*}) \big] \leq \frac{1}{\Sigma_T} \sum_{t=1}^T\eta_t \mathbb{E}\big[ F(\tilde{w}_{t-1}) - F(w_{*}) \big] \leq \frac{1}{2\Sigma_T}\norms{\tilde{w}_0 - w_{*}}^2 + \frac{L\sigma_{*}^2}{3n\Sigma_T} \cdot \sum_{t=1}^T\eta_t^3,
\end{equation*}
where $\Sigma_T := \sum_{t=1}^T\eta_t$ and $\hat{w}_T := \frac{1}{\Sigma_T}\sum_{t=1}^T \eta_t \tilde{w}_{t-1}$ is a weighted averaging sequence.
\begin{compactitem}
\item If we choose $\eta := \frac{\gamma n^{1/3}}{T^{1/3}} \leq \frac{1}{2L}$ as a constant learning rate, then we have
\begin{equation*}
\mathbb{E}\big[ F(\hat{w}_T) - F(w_{*}) \big] \leq \frac{1}{n^{1/3} T^{2/3}} \cdot \bigg[ \frac{1}{2\gamma} \norms{\tilde{w}_0 - w_{*}}^2 + \frac{\gamma^2 L\sigma_{*}^2}{3} \bigg].
\end{equation*}
\item If we choose $\eta_t := \frac{\gamma n^{1/3}}{(t + \beta)^{1/3}}$ for $1 \leq t \leq T$ as a diminishing learning rate  for some $\gamma > 0$ and $\beta \geq 1$ such that $\beta \geq 8L^3\gamma^3 n - 1$, then we have
\begin{equation*}
\mathbb{E}\big[ F(\hat{w}_T) - F(w_{*}) \big] \leq \frac{1}{n^{1/3} T^{2/3}} \cdot \bigg[ \frac{1}{2\gamma} \norms{\tilde{w}_0 - w_{*}}^2 + \frac{\gamma^2 L\sigma_{*}^2 \log(T + \beta)}{3} \bigg].
\end{equation*}
\end{compactitem}
The constant learning rate case stated here is discussed in \citep{mishchenko2020random}.
However, we add a new statement on diminishing learning rate that may be practically favorable. 
\end{remark}

\section{Convergence Analysis for Nonconvex Case}\label{sec_analysis_02}
We now provide convergence analysis for Algorithm~\ref{sgd_replacement} to solve nonconvex smooth instances of \eqref{ERM_problem_01}.
We consider two cases in Subsection~\ref{subsec:general_ncvx} and \ref{subsec:gradient_dominance_ncvx}, respectively.

\subsection{The general case}\label{subsec:general_ncvx}
We state our first result on the nonconvex case, whose proof is given in  Appendix \ref{apdx:proof_Th1_Corr_12}.

\begin{thm}\label{thm_nonconvex_01}
Suppose that Assumptions~\ref{ass_basic}  and \ref{ass_general_bounded_variance} hold for \eqref{ERM_problem_01}. 
Let $\sets{\tilde{w}_t}_{t = 1}^T$ be generated by Algorithm~\ref{sgd_replacement} with any shuffling strategy for $\pi^{(t)}$ and any learning rate $\eta_i^{(t)} = \frac{\eta_t}{n}  = \frac{\eta}{n}$ such that $0 < \eta \leq \frac{1}{L\sqrt{2(3\Theta+2)}}$.
Then, we have
\begin{equation}\label{eq_thm_nonconvex_01}
    \frac{1}{T} \sum_{t=1}^T \mathbb{E}\big[ \norms{\nabla F ( \tilde{w}_{t-1} )}^2  \big] \ \leq \ \frac{4}{T \eta} \big[ F ( \tilde{w}_{0} ) - F_{*} \big] \ + \  6 L^2 \sigma^2 \eta^2.
\end{equation}
If, additionally, $\pi^{(t)}$ is sampled uniformly at random without replacement from $[n]$ and $\eta_t$ is chosen such that $0 < \eta \leq \frac{1}{L\sqrt{2(\Theta/n + 1)}}$, then we have
\begin{equation}\label{eq_thm_nonconvex_01_RR}
    \frac{1}{T} \sum_{t=1}^T  \mathbb{E}\big[ \norms{\nabla F ( \tilde{w}_{t-1} )}^2 \big] \ \leq \ \frac{4}{T \eta} \big[ F ( \tilde{w}_{0} ) - F_{*} \big] \ + \  \frac{4L^2 \sigma^2 \eta^2}{n}.
\end{equation}
\end{thm}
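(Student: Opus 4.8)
The plan is to treat each epoch $t$ of Algorithm~\ref{sgd_replacement} as an \emph{inexact} full-gradient step on $F$ and to control the inexactness by the intra-epoch ``drift'' of the inner iterates. First I would write the epoch update as $\tilde{w}_t - \tilde{w}_{t-1} = -\eta\,\nabla F(\tilde{w}_{t-1}) + r_t$, where $r_t := -\tfrac{\eta}{n}\sum_{i=1}^n\big[\nabla f(w_{i-1}^{(t)};\pi^{(t)}(i)) - \nabla f(\tilde{w}_{t-1};\pi^{(t)}(i))\big]$ is the error from evaluating component gradients at the shifted points $w_{i-1}^{(t)}$ rather than at $w_0^{(t)}=\tilde{w}_{t-1}$; here I use that $\tfrac1n\sum_{i=1}^n\nabla f(\tilde{w}_{t-1};\pi^{(t)}(i)) = \nabla F(\tilde{w}_{t-1})$ because $\pi^{(t)}$ is a permutation of $[n]$. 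Applying the $L$-smoothness inequality \eqref{eq:Lsmooth} to $F$ between $\tilde{w}_{t-1}$ and $\tilde{w}_t$, expanding $\langle\nabla F(\tilde{w}_{t-1}),\tilde{w}_t-\tilde{w}_{t-1}\rangle$ using the above splitting, and bounding $\langle\nabla F(\tilde{w}_{t-1}),r_t\rangle$, $\norms{r_t}^2$, and $\norms{\tilde{w}_t-\tilde{w}_{t-1}}^2$ via Young's inequality and \eqref{eq:Lsmooth_basic}, I obtain a per-epoch inequality of the shape
\[
F(\tilde{w}_t) \;\le\; F(\tilde{w}_{t-1}) - \Big(\tfrac{\eta}{2} - c_1 L\eta^2\Big)\norms{\nabla F(\tilde{w}_{t-1})}^2 + c_2\, \eta L^2 \cdot \tfrac{1}{n}\sum_{i=1}^n \norms{w_{i-1}^{(t)} - \tilde{w}_{t-1}}^2
\]
for absolute constants $c_1,c_2$.

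The second ingredient is a bound on the drift $\tfrac1n\sum_i\norms{w_{i-1}^{(t)} - \tilde{w}_{t-1}}^2$. Since $w_{i-1}^{(t)} - \tilde{w}_{t-1} = -\tfrac{\eta}{n}\sum_{j=1}^{i-1}\nabla f(w_{j-1}^{(t)};\pi^{(t)}(j))$, I would apply Cauchy--Schwarz over the inner sum, split each $\nabla f(w_{j-1}^{(t)};\pi^{(t)}(j))$ as $\nabla f(\tilde{w}_{t-1};\pi^{(t)}(j))$ plus an $L\norms{w_{j-1}^{(t)}-\tilde{w}_{t-1}}$ remainder, and solve the resulting self-referential inequality for $\sum_i\norms{w_{i-1}^{(t)}-\tilde{w}_{t-1}}^2$ (valid once $\eta L$ is below an absolute constant) --- this is essentially lemma-level bookkeeping. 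The residual term $\tfrac1n\sum_j\norms{\nabla f(\tilde{w}_{t-1};\pi^{(t)}(j))}^2 = \tfrac1n\sum_{j=1}^n\norms{\nabla f(\tilde{w}_{t-1};j)}^2$ is then handled by Assumption~\ref{ass_general_bounded_variance} together with the identity $\tfrac1n\sum_j\norms{\nabla f(w;j)}^2 = \tfrac1n\sum_j\norms{\nabla f(w;j)-\nabla F(w)}^2 + \norms{\nabla F(w)}^2 \le (\Theta+1)\norms{\nabla F(w)}^2 + \sigma^2$ (the cross terms vanishing). Substituting the drift bound into the per-epoch inequality, the $\norms{\nabla F(\tilde{w}_{t-1})}^2$-proportional part of the drift is absorbed into the $\tfrac{\eta}{2}\norms{\nabla F(\tilde{w}_{t-1})}^2$ descent term --- this is exactly where the restriction $\eta \le \tfrac{1}{L\sqrt{2(3\Theta+2)}}$ is used --- leaving a clean recursion $F(\tilde{w}_t) \le F(\tilde{w}_{t-1}) - \tfrac{\eta}{4}\norms{\nabla F(\tilde{w}_{t-1})}^2 + \tfrac{3}{2}L^2\sigma^2\eta^3$. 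Taking expectations (Remark~\ref{re:type_of_convergence}), telescoping over $t=1,\dots,T$, using $F(\tilde{w}_T)\ge F_{*}$, and dividing by $\tfrac{T\eta}{4}$ then gives \eqref{eq_thm_nonconvex_01}.

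For the randomized reshuffling bound \eqref{eq_thm_nonconvex_01_RR} I would rerun the same two steps but exploit that $\pi^{(t)}$ is a uniformly random permutation to sharpen the drift estimate. The key point is that, conditioning on $\tilde{w}_{t-1}$, the vector $\sum_{j=1}^{i-1}\nabla f(\tilde{w}_{t-1};\pi^{(t)}(j))$ is a sum of $i-1$ terms drawn \emph{without replacement} from $\{\nabla f(\tilde{w}_{t-1};l)\}_{l=1}^n$, so $\mathbb{E}\big\|\sum_{j=1}^{i-1}\nabla f(\tilde{w}_{t-1};\pi^{(t)}(j))\big\|^2 = (i-1)^2\norms{\nabla F(\tilde{w}_{t-1})}^2 + (i-1)\tfrac{n-i+1}{n-1}\cdot\tfrac1n\sum_{l}\norms{\nabla f(\tilde{w}_{t-1};l)-\nabla F(\tilde{w}_{t-1})}^2$; the without-replacement variance factor $\tfrac{n-i+1}{n-1}$ together with the summation $\sum_{i=1}^n(i-1)\tfrac{n-i+1}{n-1} = \tfrac{n(n+1)}{6}$ produces an extra factor $\tfrac1n$ in front of the noise term, so that $\tfrac1n\sum_i\mathbb{E}\norms{w_{i-1}^{(t)}-\tilde{w}_{t-1}}^2 \lesssim \eta^2\norms{\nabla F(\tilde{w}_{t-1})}^2 + \tfrac{\eta^2}{n}\big(\Theta\norms{\nabla F(\tilde{w}_{t-1})}^2 + \sigma^2\big)$. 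Feeding this into the per-epoch inequality shrinks the noise floor to order $L^2\eta^3\sigma^2/n$ and relaxes the stepsize condition to $\eta\le\tfrac{1}{L\sqrt{2(\Theta/n+1)}}$; telescoping as before yields \eqref{eq_thm_nonconvex_01_RR}. I expect the main obstacle to be this last drift estimate: the remainder terms $\nabla f(w_{j-1}^{(t)};\pi^{(t)}(j)) - \nabla f(\tilde{w}_{t-1};\pi^{(t)}(j))$ break the clean sampling-without-replacement structure, so one must peel them off --- e.g., by induction on $i$ (a discrete Grönwall step) combined with the conditional-variance identity --- without reintroducing a term of order $\sigma^2$ instead of $\sigma^2/n$; getting that bookkeeping and the constants right, and ensuring the absorbed $\norms{\nabla F}^2$-terms stay within the $\tfrac{\eta}{2}$ budget, is the delicate part of the argument.
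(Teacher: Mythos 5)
Your proposal follows essentially the same route as the paper: the epoch update is treated as an inexact full-gradient step (the paper's Lemma~\ref{le:F_bound}), the intra-epoch drift $\frac{1}{n}\sum_i\norms{w_i^{(t)}-w_0^{(t)}}^2$ is bounded by a self-referential/Gr\"onwall-type argument combined with Assumption~\ref{ass_general_bounded_variance} (Lemma~\ref{le:ncvx_keybound1}), the randomized-reshuffling improvement comes from the sampling-without-replacement variance identity applied conditionally to the $\mathcal{F}_t$-measurable gradients at $w_0^{(t)}$ (Lemma~\ref{le:lem_key_rr}), and the resulting recursion $F(\tilde{w}_t)\le F(\tilde{w}_{t-1})-\frac{\eta}{4}\norms{\nabla F(\tilde{w}_{t-1})}^2+\frac{3}{2}L^2\sigma^2\eta^3$ is telescoped exactly as in the paper. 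The "delicate part" you flag is handled in the paper precisely as you propose: the remainder terms are peeled off into the self-referential drift quantity before the conditional-variance identity is applied, so the proposal is correct.
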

If $L$, $\sigma$, and $\Theta$ are known, then we can choose the following learning rate to get a concrete bound as stated in Corollary~\ref{cor_nonconvex_01}, whose proof can be found in Appendix \ref{apdx:proof_Th1_Corr_12}.

\begin{cor}\label{cor_nonconvex_01}
Let $\sets{\tilde{w}_t}_{t=1}^T$ be generated by Algorithm~\ref{sgd_replacement} for solving \eqref{ERM_problem_01}.
For a given $\epsilon$ such that $0 < \epsilon \leq 2\sigma^2$, under the same conditions as of \eqref{eq_thm_nonconvex_01} in Theorem~\ref{thm_nonconvex_01}, if we choose a constant learning rate $\eta := \frac{\sqrt{\epsilon}}{2L \sigma \sqrt{3\Theta + 2}}$, then to guarantee 
$\frac{1}{T} \sum_{t=1}^T \mathbb{E}\big[ \norms{ \nabla F ( \tilde{w}_{t-1} ) }^2 \big] \leq \epsilon$ for \eqref{ERM_problem_01}, it requires at most $T := \Big\lfloor \frac{16 L \sigma (3\Theta+2)^{3/2} [ F ( \tilde{w}_{0} ) - F_{*} ]}{(6\Theta + 1)} \cdot \frac{1}{\epsilon^{3/2}} \Big\rfloor$ outer iterations. 
As a result, the total number of gradient evaluations  is at most $\mathcal{T}_{\nabla{f}} := \Big\lfloor \frac{16 L \sigma (3\Theta+2)^{3/2} [ F ( \tilde{w}_{0} ) - F_{*} ]}{(6\Theta + 1)} \cdot \frac{n}{\epsilon^{3/2}} \Big\rfloor$. 

If, in addition, $\pi^{(t)}$ is sampled uniformly at random without replacement from $[n]$, then by choosing $\eta := \frac{\sqrt{n \epsilon}}{2L\sigma\sqrt{2(\Theta/n + 1)}}$ for $0 < \epsilon \leq \frac{4\sigma^2}{n}$, to guarantee $\frac{1}{T} \sum_{t=1}^T  \mathbb{E}\big[ \norms{ \nabla F ( \tilde{w}_{t-1} ) }^2\big] \leq \epsilon$, it requires at most $T := \left\lfloor  \frac{16(\Theta/n + 1)^{3/2} [ F ( \tilde{w}_{0} ) - F_{*} ] }{(2\Theta/n +1)} \cdot \frac{L\sigma }{\sqrt{n} \epsilon^{3/2}} \right\rfloor $ outer iterations.
Consequently, the total number of gradient evaluations is at most $\mathcal{T}_{\nabla{f}} := \left\lfloor  \frac{16(\Theta/n + 1)^{3/2} [ F ( \tilde{w}_{0} ) - F_{*} ] }{(2\Theta/n +1)} \cdot \frac{ L\sigma \sqrt{n}}{\varepsilon^{3/2}} \right\rfloor$.
\end{cor}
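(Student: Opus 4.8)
The plan is to obtain Corollary~\ref{cor_nonconvex_01} as an immediate consequence of Theorem~\ref{thm_nonconvex_01}: substitute the prescribed constant learning rate into the epoch-averaged bound \eqref{eq_thm_nonconvex_01} (respectively \eqref{eq_thm_nonconvex_01_RR}) and then solve for the smallest number of epochs $T$ that forces the right-hand side below $\epsilon$. No new estimate is needed; the work is purely in checking feasibility of the stepsize and in tracking constants.

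For the first claim, I would first verify that $\eta := \frac{\sqrt{\epsilon}}{2L\sigma\sqrt{3\Theta+2}}$ is admissible for Theorem~\ref{thm_nonconvex_01}, i.e.\ $0<\eta\le \frac{1}{L\sqrt{2(3\Theta+2)}}$. Squaring, this is equivalent to $\frac{\epsilon}{4\sigma^2}\le\frac{1}{2}$, i.e.\ $\epsilon\le 2\sigma^2$, which is exactly the standing assumption on $\epsilon$. Plugging this $\eta$ into \eqref{eq_thm_nonconvex_01}, the variance term equals $6L^2\sigma^2\eta^2 = \frac{3\epsilon}{2(3\Theta+2)} = \frac{3\epsilon}{6\Theta+4}$, which is $<\epsilon$, leaving the budget $\epsilon - \frac{3\epsilon}{6\Theta+4} = \frac{(6\Theta+1)\epsilon}{6\Theta+4}$ for the optimization term $\frac{4}{T\eta}[F(\tilde{w}_0)-F_*]$. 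Requiring the latter to be at most this budget, and using $\frac{1}{\eta} = \frac{2L\sigma\sqrt{3\Theta+2}}{\sqrt{\epsilon}}$ together with $6\Theta+4 = 2(3\Theta+2)$, yields the threshold $T \ge \frac{16L\sigma(3\Theta+2)^{3/2}[F(\tilde{w}_0)-F_*]}{(6\Theta+1)\,\epsilon^{3/2}}$, which is the stated $T$. Since one epoch of Algorithm~\ref{sgd_replacement} performs exactly $n$ component-gradient evaluations, the total cost is $\mathcal{T}_{\nabla f} = nT$, giving the claimed bound.

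The randomized-reshuffling part follows the same recipe but starting from the sharper bound \eqref{eq_thm_nonconvex_01_RR}: I would check that $\eta := \frac{\sqrt{n\epsilon}}{2L\sigma\sqrt{2(\Theta/n+1)}}$ meets the admissibility condition $0<\eta\le\frac{1}{L\sqrt{2(\Theta/n+1)}}$, which squares down to $\epsilon\le \frac{4\sigma^2}{n}$ (again the standing hypothesis); then compute the residual term $\frac{4L^2\sigma^2\eta^2}{n} = \frac{\epsilon}{2(\Theta/n+1)} < \epsilon$, allocate the remaining $\epsilon$-budget to the optimization term $\frac{4}{T\eta}[F(\tilde{w}_0)-F_*]$, solve the resulting scalar inequality for $T$, and multiply by $n$ to obtain $\mathcal{T}_{\nabla f}$. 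I do not anticipate any genuine obstacle: the argument is a one-shot substitution plus an elementary inequality. The only points requiring care are (i) confirming feasibility of the prescribed $\eta$ within the stepsize range of Theorem~\ref{thm_nonconvex_01} --- this is precisely where the conditions $\epsilon\le 2\sigma^2$ and $\epsilon\le 4\sigma^2/n$ are used --- and (ii) the constant bookkeeping when splitting $\epsilon$ between the optimization and variance terms so that $T$ comes out in the stated closed form; any cruder split (e.g.\ forcing each term $\le\epsilon/2$) still gives the same $\Ocal(\epsilon^{-3/2})$ order with a mildly worse constant.
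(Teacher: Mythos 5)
Your proposal is correct and follows essentially the same route as the paper's own proof: verify that the prescribed $\eta$ satisfies the stepsize restriction of Theorem~\ref{thm_nonconvex_01} (which is exactly where $\epsilon\le 2\sigma^2$, resp.\ $\epsilon\le 4\sigma^2/n$, enters), substitute into \eqref{eq_thm_nonconvex_01} (resp.\ \eqref{eq_thm_nonconvex_01_RR}), observe that the variance term equals $\tfrac{3\epsilon}{6\Theta+4}$ (resp.\ $\tfrac{\epsilon}{2(\Theta/n+1)}$), allocate the remaining budget to the $\tfrac{4}{T\eta}[F(\tilde{w}_0)-F_*]$ term, and solve for $T$, then multiply by $n$ for the gradient count. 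The constant bookkeeping in your first part matches the paper's exactly, and the second part is the same one-line substitution the paper performs.
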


\begin{rem}\label{rem_nonconvex}
\normalfont
To obtain the same bound, the total complexity of the standard SGD is $\Ocal( L_F \sigma_S^2 \varepsilon^{-2})$ for solving \eqref{Expectation_problem_01} under the bounded variance $\mathbb{E}_i[ \norms{ \nabla f(w; i) - \nabla F(w) }^2 ] \leq \sigma_S^2$ for some $\sigma_S > 0$ and the $L_F$-smoothness of $F$. 
Note that the standard SGD only requires $F$ to be $L_F$-smooth while we impose the smoothness on individual realizations. 
Therefore, $L_F$ and $L$ may be different \citep{ghadimi2013stochastic}. 
For a rough comparison, if $n < \Ocal \Big( \frac{L_F \sigma_S^2}{L \sigma} \cdot \frac{1}{\varepsilon^{1/2}} \Big)$, then Algorithm~\ref{sgd_replacement} with any shuffling strategy seems to have advantages over the standard  SGD method in the nonconvex setting. 
In addition, if a randomized reshuffling strategy is used, then for $n \leq \frac{4\sigma^2}{\varepsilon}$, the complexity of  Algorithm~\ref{sgd_replacement}  is $\Ocal\left( \frac{L\sigma\sqrt{n}}{\varepsilon^{3/2}} \right)$, which is better than SGD by a factor $\frac{\sigma}{\varepsilon^{1/3}}$.
From this point of view, it seems that Algorithm~\ref{sgd_replacement} with a general shuffling strategy is theoretically less efficient than SGD when a low accuracy solution is desirable (i.e. $\varepsilon$ is not too small) or when $n \gg 1$.
However, we believe  that Algorithm~\ref{sgd_replacement} allows more flexible strategy to choose $f(\cdot; i)$ rather than that i.i.d. sampling.
For instance, choosing a randomized reshuffling strategy significant improves the complexity of Algorithm~\ref{sgd_replacement}.
We also note that our convergence guarantee is completely different from \citep{meng2019convergence} as mentioned earlier.
Nevertheless, Assumptions~\ref{ass_basic} and \ref{ass_general_bounded_variance} are very standard and hold for various applications in machine learning.
\end{rem}

Let us propose another choice of the learning rate $\eta_t$ in the following result, who proof can also be founded in Appendix~\ref{apdx:proof_Th1_Corr_12}.

\begin{cor}\label{cor_nonconvex_01_01}
Let $\sets{\tilde{w}_t}_{t=1}^T$ be generated by Algorithm~\ref{sgd_replacement}.
Under the same conditions as of \eqref{eq_thm_nonconvex_01} in Theorem~\ref{thm_nonconvex_01}, if we choose a constant learning rate $\eta := \frac{\gamma}{T^{1/3}}$ for some $\gamma > 0$ and $T \geq 1$ such that $T^{1/3} \geq \gamma L\sqrt{2(3\Theta + 2)}$, then we have
\begin{equation}\label{eq_nonconvex_01}
\frac{1}{T} \sum_{t=1}^T \mathbb{E} \big[ \Vert \nabla F ( \tilde{w}_{t-1} ) \Vert^2 \big]  \leq \frac{1}{T^{2/3}}\left[ \frac{4 \big( F ( \tilde{w}_{0} ) - F_{*} \big) }{\gamma} \ + \ 6 L^2 \sigma^2 \gamma^2 \right] = \Ocal\left( \frac{1}{T^{2/3}} \right).
\end{equation}
If, in addition, $\pi^{(t)}$ is sampled uniformly at random without replacement from $[n]$, then by choosing $\eta := \frac{\gamma n^{1/3}}{T^{1/3}}$ for some $\gamma > 0$ such that $T^{1/3} \geq \gamma Ln^{1/3}\sqrt{2(\Theta/n+1)}$, we have
\begin{equation}\label{eq_nonconvex_01b}
    \frac{1}{T} \sum_{t=1}^T  \mathbb{E}\big[ \norms{\nabla F ( \tilde{w}_{t-1} )}^2 \big] \leq  \frac{4}{n^{1/3}T^{2/3}} \left[  \frac{ \left( F ( \tilde{w}_{0} ) - F_{*} \right)}{\gamma} \ + \ L^2 \sigma^2 \gamma^2 \right] = \Ocal\left(\frac{1}{n^{1/3}T^{2/3}} \right).
\end{equation}
\end{cor}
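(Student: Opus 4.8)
The plan is to obtain Corollary~\ref{cor_nonconvex_01_01} as an immediate specialization of Theorem~\ref{thm_nonconvex_01}: each of the two bounds in the corollary follows by plugging the prescribed constant learning rate into the epoch-averaged gradient bounds \eqref{eq_thm_nonconvex_01} and \eqref{eq_thm_nonconvex_01_RR} and simplifying the powers of $T$ (and $n$). No new estimate is needed beyond what Theorem~\ref{thm_nonconvex_01} already provides; the only thing to verify carefully is that the chosen stepsize satisfies the admissibility condition of that theorem, which is exactly what the stated hypothesis on $T$ encodes.

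For the first claim \eqref{eq_nonconvex_01}, I would first check admissibility: with $\eta := \gamma T^{-1/3}$, the requirement $0 < \eta \leq \frac{1}{L\sqrt{2(3\Theta+2)}}$ from Theorem~\ref{thm_nonconvex_01} is equivalent to $T^{1/3} \geq \gamma L\sqrt{2(3\Theta+2)}$, which is precisely the hypothesis. Hence \eqref{eq_thm_nonconvex_01} applies and gives
\begin{equation*}
\frac{1}{T}\sum_{t=1}^T \mathbb{E}\big[\norms{\nabla F(\tilde{w}_{t-1})}^2\big] \leq \frac{4}{T\eta}\big[F(\tilde{w}_0) - F_{*}\big] + 6L^2\sigma^2\eta^2.
\end{equation*}
Substituting $\eta = \gamma T^{-1/3}$ turns the first term into $\frac{4T^{1/3}}{\gamma T}[F(\tilde{w}_0)-F_{*}] = \frac{4}{\gamma T^{2/3}}[F(\tilde{w}_0)-F_{*}]$ and the second into $6L^2\sigma^2\gamma^2 T^{-2/3}$; factoring out $T^{-2/3}$ yields exactly \eqref{eq_nonconvex_01}, and the $\Ocal(T^{-2/3})$ statement follows since the bracket is a constant independent of $T$.

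For the randomized reshuffling claim \eqref{eq_nonconvex_01b}, the argument is identical but starts from \eqref{eq_thm_nonconvex_01_RR}. With $\eta := \gamma n^{1/3} T^{-1/3}$, the admissibility condition $0 < \eta \leq \frac{1}{L\sqrt{2(\Theta/n+1)}}$ is equivalent to $T^{1/3} \geq \gamma L n^{1/3}\sqrt{2(\Theta/n+1)}$, again matching the hypothesis. Plugging this $\eta$ into \eqref{eq_thm_nonconvex_01_RR}, the first term becomes $\frac{4}{\gamma n^{1/3}T^{2/3}}[F(\tilde{w}_0)-F_{*}]$ and the error term $\frac{4L^2\sigma^2\eta^2}{n}$ becomes $\frac{4L^2\sigma^2\gamma^2 n^{2/3}}{nT^{2/3}} = \frac{4L^2\sigma^2\gamma^2}{n^{1/3}T^{2/3}}$; factoring out $\frac{4}{n^{1/3}T^{2/3}}$ gives \eqref{eq_nonconvex_01b}.

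The honest assessment of the main obstacle is that there essentially is none at this stage: the corollary is a genuine corollary, and all the real work (controlling the shuffling/reshuffling error and establishing the two master inequalities) is already done inside Theorem~\ref{thm_nonconvex_01}. The only points requiring a touch of care are (i) rearranging each stepsize bound to see that it is equivalent to the stated lower bound on $T$, and (ii) keeping track of the exponents of $T$ and $n$ when the constant $\eta$ is substituted — both of which are routine bookkeeping.
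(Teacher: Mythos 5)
Your proposal is correct and is essentially identical to the paper's own proof: both substitute the prescribed constant stepsize into \eqref{eq_thm_nonconvex_01} and \eqref{eq_thm_nonconvex_01_RR}, check that the hypothesis on $T$ is exactly the admissibility condition $\eta \leq \frac{1}{L\sqrt{2(3\Theta+2)}}$ (resp.\ $\eta \leq \frac{1}{L\sqrt{2(\Theta/n+1)}}$), and simplify the powers of $T$ and $n$. The arithmetic in both cases matches the stated constants, so nothing further is needed.
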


Since the total number of iterations is $K := nT$, if we express \eqref{eq_nonconvex_01} in terms of $K$, then we have $\frac{1}{T} \sum_{t=1}^T \mathbb{E}[ \Vert \nabla F ( \tilde{w}_{t-1} ) \Vert^2 ] \leq \frac{n^{2/3} \Delta_0 }{K^{2/3}}$, where $\Delta_0 := F ( \tilde{w}_{0} ) - F_{*}$.
Alternatively, we can express \eqref{eq_nonconvex_01b} in terms of $K$ as $\frac{1}{T} \sum_{t=1}^T \mathbb{E}[\Vert \nabla F ( \tilde{w}_{t-1} ) \Vert^2 ] \leq \frac{n^{1/3} \Delta_0}{K^{2/3}}$.

The following theorem characterizes an asymptotic convergence for general diminishing stepsizes, whose proof can be found in Appendix~\ref{apdx:proof_Th2_and_3}. 

\begin{thm}\label{thm_nonconvex_02}
Suppose that Assumptions \ref{ass_basic} and \ref{ass_general_bounded_variance} hold for \eqref{ERM_problem_01}. 
Let $\sets{\tilde{w}_t}_{t\geq 1}$ be generated by Algorithm~\ref{sgd_replacement} with diminishing learning rate $\eta_i^{(t)} = \frac{\eta_t}{n}$ such that $\sum_{t=1}^{\infty} \eta_t = \infty$ and $\sum_{t=1}^{\infty} \eta_t^3 < \infty$. 
Then, w.p.1. $($i.e. almost surely$)$, we have $\liminf\limits_{t\to\infty} \norms{ \nabla F ( \tilde{w}_{t-1} )}^2 = 0$. 
\end{thm}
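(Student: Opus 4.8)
The plan is to combine a descent-type inequality for one epoch with the classical Robbins--Siegmund supermartingale / Bertsekas--Tsitsiklis almost-sure convergence argument for stochastic gradient methods. First I would establish a one-epoch decrease estimate of the form
\[
\mathbb{E}\big[ F(\tilde{w}_t) \mid \mathcal{F}_{t-1}\big] \ \leq\ F(\tilde{w}_{t-1}) - c_1 \eta_t \norms{\nabla F(\tilde{w}_{t-1})}^2 + c_2 \eta_t^3\big(\norms{\nabla F(\tilde{w}_{t-1})}^2 + \sigma^2\big)
\]
for suitable constants $c_1, c_2 > 0$ depending only on $L$ and $\Theta$, where $\mathcal{F}_{t-1}$ is the $\sigma$-algebra generated by the randomness through the start of epoch $t$. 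This is essentially the per-epoch bound already used to prove Theorem~\ref{thm_nonconvex_01}; I would extract it in conditional-expectation form rather than summing it. Because $\eta_t \to 0$ (a consequence of $\sum \eta_t^3 < \infty$), for $t$ large enough the coefficient $c_1\eta_t - c_2\eta_t^3$ is positive and bounded below by, say, $\tfrac{c_1}{2}\eta_t$, so after discarding finitely many initial epochs we may assume
\[
\mathbb{E}\big[ F(\tilde{w}_t) - F_{*} \mid \mathcal{F}_{t-1}\big] \ \leq\ \big(F(\tilde{w}_{t-1}) - F_{*}\big) - \tfrac{c_1}{2}\eta_t \norms{\nabla F(\tilde{w}_{t-1})}^2 + c_2 \sigma^2 \eta_t^3 .
\]

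Next I would invoke the Robbins--Siegmund supermartingale convergence theorem with nonnegative process $V_t := F(\tilde{w}_t) - F_{*} \geq 0$, summable ``input'' $\sum_t c_2\sigma^2\eta_t^3 < \infty$, and nonnegative ``loss'' term $\tfrac{c_1}{2}\eta_t\norms{\nabla F(\tilde{w}_{t-1})}^2$. The theorem yields, with probability one, that $\{F(\tilde{w}_t) - F_{*}\}$ converges to a finite limit and, crucially, that
\[
\sum_{t=1}^{\infty} \eta_t \norms{\nabla F(\tilde{w}_{t-1})}^2 \ <\ \infty \quad \text{almost surely}.
\]
Since $\sum_t \eta_t = \infty$, this summability forces $\liminf_{t\to\infty}\norms{\nabla F(\tilde{w}_{t-1})}^2 = 0$ almost surely: if instead $\norms{\nabla F(\tilde{w}_{t-1})}^2 \geq \delta > 0$ for all large $t$ on a positive-probability event, then $\sum_t \eta_t \norms{\nabla F(\tilde{w}_{t-1})}^2 \geq \delta \sum_{t \text{ large}} \eta_t = \infty$, a contradiction.

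The main obstacle is establishing the one-epoch conditional-expectation inequality cleanly, i.e.\ controlling the drift of the inner iterates $w_i^{(t)}$ away from $\tilde{w}_{t-1}$ within epoch $t$ and bounding the resulting error terms by $\eta_t^3(\norms{\nabla F(\tilde{w}_{t-1})}^2 + \sigma^2)$ using only Assumptions~\ref{ass_basic} and \ref{ass_general_bounded_variance}; this is exactly the technical heart already present in the analysis behind Theorem~\ref{thm_nonconvex_01}, and I would reuse those estimates (the bound on $\sum_i \norms{w_i^{(t)} - w_0^{(t)}}^2$ in terms of $\eta_t^2$, $L$, $\Theta$, $\norms{\nabla F(\tilde{w}_{t-1})}^2$, and $\sigma^2$), now kept in conditional form. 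A minor secondary point is verifying measurability so that Robbins--Siegmund applies: for a deterministic shuffling rule the statement is a purely deterministic summability argument (and ``w.p.1'' is trivially satisfied), while for a random permutation rule the filtration $\{\mathcal{F}_t\}$ is the natural one generated by $\{\pi^{(1)}, \dots, \pi^{(t)}\}$, and all quantities above are adapted, so no subtlety arises. No assumption on boundedness of gradients or iterates is needed because $F(\tilde{w}_t) - F_{*} \geq 0$ already provides the nonnegative Lyapunov function required by the convergence theorem.
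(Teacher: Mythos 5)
Your proposal is correct and follows essentially the same route as the paper: a per-epoch descent inequality (reusing the estimates behind Theorem~\ref{thm_nonconvex_01}), the supermartingale convergence lemma of Robbins--Siegmund type (the paper cites it as Lemma~\ref{prop_supermartingale} from \citep{BertsekasSurvey}) to get $\sum_t \eta_t \norms{\nabla F(\tilde{w}_{t-1})}^2 < \infty$ almost surely, and then a contradiction with $\sum_t \eta_t = \infty$. Your explicit remark that $\eta_t \to 0$ (from $\sum_t \eta_t^3 < \infty$) lets one discard finitely many epochs so the stepsize condition needed for the descent inequality eventually holds is a point the paper's proof glosses over, and is a welcome clarification rather than a deviation.
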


Now, if we vary $\eta_t$, then Theorem~\ref{thm_nonconvex_04} shows how $\eta_t$ affects our rates (see Appendix~\ref{subsec:new_results}). 

\begin{thm}\label{thm_nonconvex_04}
Suppose that Assumptions \ref{ass_basic} and \ref{ass_general_bounded_variance} hold for \eqref{ERM_problem_01}. 
Let $\sets{\tilde{w}_t}_{t=1}^T$ be  generated by Algorithm~\ref{sgd_replacement} with  $\eta_i^{(t)} = \frac{\eta_t}{n}$, where $\eta_t := \frac{\gamma}{(t + \beta)^\alpha}$, for some $\gamma > 0$, $\beta > 0$, and $\frac{1}{3} < \alpha < 1$. 
If a generic shuffling strategy is used, then we let $D := \frac{3}{2}L^2\sigma^2$ and assume that $\gamma L\sqrt{2(3\Theta + 2)} \leq (\beta + 1)^{\alpha}$.
Otherwise, if a uniformly randomized reshuffling strategy is used, then we set $D := \frac{L^2\sigma^2}{n}$ and assume that $\gamma L\sqrt{2(\Theta/n + 1)} \leq (\beta + 1)^{\alpha}$.
Let $C := [F( \tilde{w}_{0} ) -F_{*}] + \frac{D \gamma^3}{(3\alpha - 1)\beta^{3\alpha - 1}} > 0$ be a given constant. 
Then, the following statements hold:
\begin{compactitem}
    \item If $\alpha = \frac{1}{2}$, then the following bound holds:
    \begin{equation*}
    \arraycolsep = 0.2em
        \begin{array}{lcl}
        \dfrac{1}{T} \displaystyle\sum_{t = 1}^{T}\mathbb{E}\big[ \norms{ \nabla F( \tilde{w}_{t-1} )  }^2 \big]
         & \leq & \dfrac{4 (1+\beta)^{1/2} \left[ F( \tilde{w}_{0} ) -F_{*} \right]}{ \gamma} \cdot \dfrac{1}{T}  \ + \ \dfrac{4 C}{ \gamma} \cdot \dfrac{(T - 1 + \beta)^{1/2}}{T}  \vspace{1ex}\\ 
         && + {~} 4D \gamma^{2} \cdot \dfrac{\log(T+\beta) - \log(\beta)}{T}. 
         \end{array}
    \end{equation*}
    \item If $\alpha \neq \frac{1}{2}$, then we have
    \begin{equation*}
    \arraycolsep = 0.2em
        \begin{array}{lcl}
        \dfrac{1}{T} \displaystyle \sum_{t = 1}^{T} \mathbb{E}\big[ \norms{ \nabla F( \tilde{w}_{t-1} )  }^2  \big]
        & \leq & \dfrac{4 (1+\beta)^{\alpha} \left[ F( \tilde{w}_{0} ) - F_{*} \right]}{ \gamma} \cdot \dfrac{1}{T} \ + \ \dfrac{2 C}{\alpha \gamma} \cdot \dfrac{(T - 1 + \beta)^{\alpha}}{T}  \vspace{1ex}\\ 
        && + {~} \dfrac{4D \gamma^{2}}{(1 - 2\alpha)} \cdot \dfrac{(T + \beta)^{1 - 2\alpha}}{T}. 
         \end{array}
    \end{equation*}
\end{compactitem}
If a uniformly randomized reshuffling strategy is used, then by replacing $\gamma$ by $n^{1/3}\gamma$, we have
\begin{compactitem}
\item If $\alpha = \frac{1}{2}$, then the convergence rate of Algorithm~\ref{sgd_replacement} is $\Ocal\left(\frac{1 \ + \ T^{1/2} \ + \ \log(T)}{n^{1/3}T}\right)$.
\item If $\alpha \neq \frac{1}{2}$, then the convergence rate of Algorithm~\ref{sgd_replacement} is $\Ocal\left(\frac{1 \ + \ T^{1-2\alpha} \ + \ \log(T)}{n^{1/3}T}\right)$.
\end{compactitem}
\end{thm}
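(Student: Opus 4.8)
The plan is to obtain Theorem~\ref{thm_nonconvex_04} as a direct specialization of Lemma~\ref{lem_general_framework_02}. The one ingredient that must be in hand is the per-epoch descent estimate underlying Theorem~\ref{thm_nonconvex_01}: for every $t\geq 1$,
\[
F(\tilde{w}_t) - F_{*} \;\leq\; \big(F(\tilde{w}_{t-1}) - F_{*}\big) \;-\; \tfrac{1}{4}\eta_t\,\norms{\nabla F(\tilde{w}_{t-1})}^2 \;+\; D\,\eta_t^3,
\]
valid whenever $\eta_t\leq \frac{1}{L\sqrt{2(3\Theta+2)}}$ with $D=\tfrac{3}{2}L^2\sigma^2$ in the generic shuffling case, and whenever $\eta_t\leq \frac{1}{L\sqrt{2(\Theta/n+1)}}$ with $D=\frac{L^2\sigma^2}{n}$ in the uniformly randomized reshuffling case. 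Because $\eta_t=\gamma/(t+\beta)^\alpha$ is nonincreasing, the hypothesis $\gamma L\sqrt{2(3\Theta+2)}\leq(\beta+1)^\alpha$ (resp.\ its reshuffling analogue) forces $\eta_t\leq\eta_1\leq\frac{1}{L\sqrt{2(3\Theta+2)}}$ for all $t$, so the inequality is in force throughout. Setting $Y_t:=F(\tilde{w}_{t-1})-F_{*}$ and $Z_t:=\norms{\nabla F(\tilde{w}_{t-1})}^2$, this is exactly the recursion $Y_{t+1}\leq Y_t-\rho\eta_t^{m}Z_t+\eta_t^{q}D$ of Lemma~\ref{lem_general_framework_02} with $\rho=\tfrac14$, $m=1$, $q=3$.

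Next I would verify the remaining hypothesis $Y_t\leq C+H\log(t+\theta)$ in the simplest form $H=0$. Dropping the nonnegative term $-\rho\eta_t Z_t$ and telescoping gives $Y_t\leq Y_1+D\sum_{s\geq 1}\eta_s^3=Y_1+D\gamma^3\sum_{s\geq 1}(s+\beta)^{-3\alpha}$; since $3\alpha>1$, comparing the series to $\int_0^{\infty}(x+\beta)^{-3\alpha}\,dx=\frac{\beta^{1-3\alpha}}{3\alpha-1}$ yields $Y_t\leq [F(\tilde{w}_0)-F_{*}]+\frac{D\gamma^3}{(3\alpha-1)\beta^{3\alpha-1}}=C$, exactly the constant in the statement. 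One then picks $\theta$ large enough to satisfy the technical inequality $1+\theta-\beta>(1-\alpha m)e^{\alpha m/(1-\alpha m)}$; this is harmless because $H=0$ removes any $\theta$-dependence from the conclusion. Plugging $\rho=\tfrac14$, $m=1$, $q=3$, $H=0$ and the above $C$ into the conclusion of Lemma~\ref{lem_general_framework_02} reproduces the three displayed terms of Theorem~\ref{thm_nonconvex_04}: $\frac{4(1+\beta)^\alpha[F(\tilde{w}_0)-F_{*}]}{\gamma T}$ from $\frac{(1+\beta)^{\alpha m}Y_1}{\rho\gamma^m T}$, $\frac{2C(T-1+\beta)^\alpha}{\alpha\gamma T}$ from $\frac{C(T-1+\beta)^{\alpha m}}{2\rho\alpha m\gamma^m T}$, and the last term from $\frac{D\gamma^{q-m}}{\rho}\cdot\frac{A(T)}{T}$; the dichotomy ``$\alpha=\tfrac12$ versus $\alpha\neq\tfrac12$'' is precisely the split $\alpha(q-m)=2\alpha=1$ versus $\neq 1$ in the definition of $A(T)$. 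For the uniformly randomized reshuffling claims, I would re-run the identical argument with $D=\frac{L^2\sigma^2}{n}$ and then substitute $\gamma\mapsto n^{1/3}\gamma$: this leaves $C$ essentially $n$-independent (since $D\gamma^3\mapsto \frac{L^2\sigma^2}{n}\cdot n\gamma^3$), scales the $Y_1/\gamma$ and $C/\gamma$ prefactors by $n^{-1/3}$, and turns $D\gamma^{q-m}=D\gamma^2$ into $n^{-1/3}L^2\sigma^2\gamma^2$; collecting the powers of $T$ in $A(T)$ then gives $\Ocal\!\big(\frac{1+T^{1/2}+\log T}{n^{1/3}T}\big)$ when $\alpha=\tfrac12$ and $\Ocal\!\big(\frac{1+T^{1-2\alpha}+\log T}{n^{1/3}T}\big)$ otherwise.

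The substantive difficulty is concentrated entirely in the per-epoch descent inequality; everything afterwards is a mechanical application of Lemma~\ref{lem_general_framework_02}. The hard part there is controlling the within-epoch drift $\sum_{i=1}^{n}\norms{w_{i-1}^{(t)}-\tilde{w}_{t-1}}^2$: one expands $F(\tilde{w}_t)$ around $\tilde{w}_{t-1}$ via \eqref{eq:Lsmooth}, writes the aggregated epoch step as $-\eta_t\nabla F(\tilde{w}_{t-1})$ plus an error built from gradient differences along the inner loop, bounds that error using $L$-smoothness and the variance control \eqref{eq:general_bounded_variance}, and shows the quadratic-in-$\eta_t$ curvature contributions can be absorbed into the $-\tfrac14\eta_t\norms{\nabla F(\tilde{w}_{t-1})}^2$ term (which is exactly where $\eta_t L\sqrt{2(3\Theta+2)}\leq 1$ is spent), leaving a genuine third-order residual $D\eta_t^3$; in the reshuffling case one additionally replaces a worst-case permutation bound by the expectation over a uniform random permutation, shrinking the variance term by the factor $1/n$. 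Since this descent estimate is already the content proved for Theorem~\ref{thm_nonconvex_01}, the only remaining bookkeeping — checking the stepsize restriction holds under the stated conditions on $\gamma,\beta$, and that $\theta$ may be taken arbitrarily large — is routine.
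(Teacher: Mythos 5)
Your proposal follows essentially the same route as the paper's own proof: the paper likewise starts from the per-epoch descent estimate $\mathbb{E}[F(\tilde{w}_t)] \leq \mathbb{E}[F(\tilde{w}_{t-1})] - \frac{\eta_t}{4}\mathbb{E}[\norms{\nabla F(\tilde{w}_{t-1})}^2] + D\eta_t^3$ (its eq.~\eqref{eq:th5_est100}, obtained from \eqref{eq_001} and \eqref{eq:eq_001_RR_est}), telescopes and integral-compares to get the uniform bound $Y_t \leq C$ with $H=0$, and then invokes Lemma~\ref{lem_general_framework_02} with $\rho = \tfrac14$, $m=1$, $q=3$, finishing the reshuffling claims by the substitution $\gamma \mapsto n^{1/3}\gamma$. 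The only cosmetic difference is that you make explicit the (harmless, since $H=0$) choice of $\theta$, which the paper leaves implicit.
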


\begin{rem}\label{rem_thm_nonconvex_04}
\normalfont
In Theorem~\ref{thm_nonconvex_04}, if we choose $\alpha := \frac{1}{3} + \delta$ for some $0 < \delta < \frac{1}{6}$, then we have
\begin{equation*}
\arraycolsep = 0.2em
\begin{array}{lcl}
    \dfrac{1}{T} \displaystyle\sum_{t = 1}^{T}  \mathbb{E}\big[ \norms{ \nabla F( \tilde{w}_{t-1} )  }^2 \big]
     & \leq & \dfrac{4 (1+\beta)^{\frac{1}{3} + \delta} \left[ F( \tilde{w}_{0} ) - F_{*} \right] }{ \gamma} \cdot \dfrac{1}{T} \ + \ \dfrac{2C}{\gamma (\frac{1}{3} + \delta)} \cdot \dfrac{(T - 1 + \beta)^{\frac{1}{3} + \delta}}{T}  \vspace{1ex}\\
     && + {~} \dfrac{12D \gamma^{2}}{1 - 6\delta} \cdot \dfrac{(T + \beta)^{\frac{1}{3} - 2\delta}}{T}, 
\end{array}    
\end{equation*}
where $C := [F( \tilde{w}_{0} ) -F_{*}] + \frac{D \gamma^3}{3 \delta \beta^{3 \delta}}$. 
Hence, the convergence rate of Algorithm~\ref{sgd_replacement} is $\Ocal\big( T^{-(\frac{2}{3} - \delta)} \big)$ in general.
If a randomized reshuffling strategy is used, then this rate is $\Ocal\left( n^{-1/3}T^{-(\frac{2}{3} - \delta)} \right)$.
\end{rem}

For the extreme case $\alpha := \frac{1}{3}$, we have the following result (see Appendix~\ref{subsec:new_results}). 

\begin{thm}\label{thm_nonconvex_04_02}
Suppose that Assumptions \ref{ass_basic} and \ref{ass_general_bounded_variance} hold for \eqref{ERM_problem_01}. 
Let $\sets{\tilde{w}_t}_{t=1}^T$ be generated by Algorithm~\ref{sgd_replacement} with  $\eta_i^{(t)} := \frac{\eta_t}{n}$, where $\eta_t := \frac{\gamma}{(t + \beta)^{1/3}}$ for some $\gamma > 0$ and $\beta > 0$. 
If any shuffling strategy is used, then let $D := \frac{3}{2}L^2\sigma^2$ and assume that $\gamma L\sqrt{2(3\Theta + 2)} \leq (\beta + 1)^{1/3}$.
Otherwise,  if a uniformly randomized reshuffling strategy is used, then let $D := \frac{L^2\sigma^2}{n}$ and assume that $\gamma L\sqrt{2(\Theta/n + 1)} \leq (\beta + 1)^{1/3}$.
Let $C := [F( \tilde{w}_{0} ) -F_{*}] + \frac{D \gamma^3}{(1 + \beta)} > 0$ be a given constant. 
Then the following bound holds:
\begin{equation*}
    \arraycolsep=0.2em
     \begin{array}{lcl}
     \dfrac{1}{T} \displaystyle\sum_{t = 1}^{T}  \mathbb{E}\big[ \norms{ \nabla F( \tilde{w}_{t-1} )  }^2  \big] & \leq & 
     \dfrac{4 (1+\beta)^{1/3} \left[ F( \tilde{w}_{0} ) -F_{*} \right] }{ \gamma} \cdot \dfrac{1}{T} \ + \ 12D \gamma^{2} \cdot \dfrac{(T + \beta)^{1/3}}{T}  \vspace{1ex}\\ 
     && + {~} 6D \gamma^{2} \cdot \dfrac{(T - 1 + \beta)^{1/3} \log(T + 1 + \beta)}{T}  \ + \ \dfrac{6 C}{ \gamma} \cdot \dfrac{(T - 1 + \beta)^{1/3}}{T}.
    \end{array}
\end{equation*}
Consequently, the convergence rate of Algorithm~\ref{sgd_replacement} is  $\Ocal\left(\frac{1}{T} + \frac{1}{T^{2/3}} + \frac{\log(T)}{T^{2/3}}\right)$.
In addition, if a uniformly randomized reshuffling strategy is used, then by replacing $\gamma$ by $n^{1/3}\gamma$ into the above estimate, the convergence rate of Algorithm~\ref{sgd_replacement} is $\Ocal\left(\frac{1}{n^{1/3}T} + \frac{1}{n^{1/3}T^{2/3}} + \frac{\log(T)}{n^{1/3}T^{2/3}}\right)$, provided that $\gamma Ln^{1/3}\sqrt{2(\Theta/n + 1)} \leq (\beta + 1)^{1/3}$.
\end{thm}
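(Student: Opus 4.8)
The plan is to derive this as a direct corollary of Lemma~\ref{lem_general_framework_02} with the parameter choice $m = 1$, $q = 3$, $\alpha = \tfrac{1}{3}$, and $\rho = \tfrac{1}{4}$. The first ingredient is the one-epoch descent estimate that already underlies the proof of Theorem~\ref{thm_nonconvex_01}: writing $Y_t := \mathbb{E}[F(\tilde{w}_{t-1})] - F_{*}$ and $Z_t := \mathbb{E}[\norms{\nabla F(\tilde{w}_{t-1})}^2]$, one has, for every $t$ for which $\eta_t \leq \frac{1}{L\sqrt{2(3\Theta+2)}}$ (generic shuffling) resp. $\eta_t \leq \frac{1}{L\sqrt{2(\Theta/n+1)}}$ (uniformly randomized reshuffling),
\[
    Y_{t+1} \leq Y_t - \tfrac{1}{4}\eta_t Z_t + D\,\eta_t^{3},
\]
with $D = \tfrac{3}{2}L^2\sigma^2$ in the first case and $D = \tfrac{L^2\sigma^2}{n}$ in the second — precisely the inequality that, telescoped with a constant stepsize, produces \eqref{eq_thm_nonconvex_01} and \eqref{eq_thm_nonconvex_01_RR}. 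Since $\eta_t = \frac{\gamma}{(t+\beta)^{1/3}}$ is nonincreasing, the stepsize restriction reduces to its value at $t=1$, i.e. to the standing hypotheses $\gamma L\sqrt{2(3\Theta+2)} \leq (\beta+1)^{1/3}$ (resp. $\gamma L n^{1/3}\sqrt{2(\Theta/n+1)}\leq(\beta+1)^{1/3}$ after the substitution $\gamma \mapsto n^{1/3}\gamma$), so the displayed descent inequality holds for all $t \geq 1$.

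Next I would establish the a priori logarithmic growth bound on $Y_t$ that Lemma~\ref{lem_general_framework_02} requires. Dropping the nonpositive term $-\tfrac14\eta_t Z_t$ and telescoping gives $Y_t \leq Y_1 + D\gamma^{3}\sum_{s=1}^{t-1}\frac{1}{s+\beta}$, using $\eta_s^{3} = \gamma^{3}/(s+\beta)$ when $\alpha = \tfrac13$. Bounding the partial harmonic-type sum by $\sum_{s=1}^{t-1}\frac{1}{s+\beta}\leq \frac{1}{1+\beta} + \log(t-1+\beta) \leq \frac{1}{1+\beta} + \log(t+\beta+1)$ yields $Y_t \leq C + H\log(t+\theta)$ with $C := [F(\tilde{w}_0) - F_{*}] + \frac{D\gamma^{3}}{1+\beta}$, $H := D\gamma^{3}$, and $\theta := \beta + 1$ — exactly the constant $C$ in the statement.

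Then I would verify the remaining hypotheses of Lemma~\ref{lem_general_framework_02}: $q = 3 > m = 1$; $\alpha m = \tfrac13 \leq \tfrac12$; $C > 0$, $H \geq 0$, $\theta = \beta+1 > 0$; and $1 + \theta - \beta = 2 > \tfrac{2}{3}\sqrt{e} = (1-\alpha m)e^{\alpha m/(1-\alpha m)}$. Substituting $\alpha = \tfrac13$, $m=1$, $q=3$, $\rho=\tfrac14$ into \eqref{eq:key_concl}, and noting that $\alpha(q-m) = \tfrac23 \neq 1$ so $A(T) = 3(T+\beta)^{1/3}$, the four terms of \eqref{eq:key_concl} map onto the four asserted terms: the $Y_1$-term becomes $\tfrac{4(1+\beta)^{1/3}[F(\tilde{w}_0)-F_{*}]}{\gamma}$, the $C$- and $H$-terms acquire the factor $\tfrac{1}{2\rho\alpha m} = 6$ and collapse (using $H=D\gamma^3$, $\theta=\beta+1$) to $\tfrac{6C(T-1+\beta)^{1/3}}{\gamma}$ and $6D\gamma^{2}(T-1+\beta)^{1/3}\log(T+1+\beta)$, and $\tfrac{D\gamma^{2}}{\rho}A(T) = 12D\gamma^{2}(T+\beta)^{1/3}$. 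Reading off the powers of $T$ gives the rate $\Ocal(T^{-1} + T^{-2/3} + T^{-2/3}\log T)$. For the uniformly randomized reshuffling case, substituting $\gamma \mapsto n^{1/3}\gamma$ with $D = \tfrac{L^2\sigma^2}{n}$ multiplies $\tfrac{1}{\gamma}$ and $D\gamma^{2}$ each by $n^{-1/3}$, yielding the stated $\Ocal(n^{-1/3}T^{-1} + n^{-1/3}T^{-2/3} + n^{-1/3}T^{-2/3}\log T)$ rate under $\gamma L n^{1/3}\sqrt{2(\Theta/n+1)}\leq(\beta+1)^{1/3}$.

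I do not expect a serious obstacle: the real content is the one-epoch descent estimate, which is shared with Theorem~\ref{thm_nonconvex_01}, and the rest is bookkeeping inside Lemma~\ref{lem_general_framework_02}. The one point demanding care is the borderline-looking check that $\theta = \beta+1$ satisfies $1+\theta-\beta > (1-\alpha m)e^{\alpha m/(1-\alpha m)}$ — which is why the harmonic-sum bound is inflated to a logarithm shifted by $\beta+1$ rather than by $\beta$ — together with confirming that $H$ is exactly $D\gamma^{3}$ so that the third term of \eqref{eq:key_concl} reduces to $6D\gamma^{2}(T-1+\beta)^{1/3}\log(T+1+\beta)$ with no spurious extra logarithmic factor.
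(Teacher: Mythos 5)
Your proposal is correct and follows essentially the same route as the paper's own proof: the per-epoch descent inequality $Y_{t+1}\le Y_t-\tfrac14\eta_t Z_t+D\eta_t^3$ inherited from Theorem~\ref{thm_nonconvex_01}, the telescoped harmonic-sum bound giving $Y_t\le C+D\gamma^3\log(t+1+\beta)$ with exactly the same $C$, $H=D\gamma^3$, $\theta=\beta+1$, and then Lemma~\ref{lem_general_framework_02} with $q=3$, $m=1$, $\alpha=\tfrac13$, $\rho=\tfrac14$. Your term-by-term accounting (including $A(T)=3(T+\beta)^{1/3}$ and the check $2>\tfrac{2}{3}e^{1/2}$) matches the paper's computation.
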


\begin{rem}
\normalfont
The choice of the learning rates in Theorems~\ref{thm_nonconvex_04} and \ref{thm_nonconvex_04_02} is not necessarily dependent on the smoothness constant $L$ and $\Theta$ of Assumption~\ref{ass_general_bounded_variance}. 
Since $\eta_t$ is diminishing and both $L$ and $\Theta$ are finite, by choosing $\beta$ large, the condition $\eta_t  \leq \frac{1}{L\sqrt{2(3\Theta + 2)}}$ or $\eta_t  \leq \frac{1}{L\sqrt{2(\Theta/n + 1)}}$ automatically holds. 
Therefore, the choices of $\gamma$, $\beta$, and $\alpha$ makes our results more flexible to adjust in particular practical implementation.
We believe that a diminishing or scheduled diminishing learning rate is more favorable in practice than a constant one.
\end{rem}

\subsection{Convergence under gradient dominance}\label{subsec:gradient_dominance_ncvx}
We can further improve convergence rates of Algorithm~\ref{sgd_replacement} in the nonconvex case by imposing the following gradient dominance condition. 
\begin{ass}\label{ass_PLcond}
A function $F$ is said to be $\tau$-gradient dominant if there exists a constant $\tau \in (0, +\infty)$ such that 
\begin{equation}\label{eq_grad_dominant}
    F(w) - F_{*} \leq \tau \| \nabla F(w) \|^2, \quad \forall w\in\dom{F},
\end{equation}
where $F_{*} := \inf_{w\in\R^d}F(w)$.  
\end{ass}

This assumption is well-known and widely used in the literature, see, e.g., \citep{polyak_condition,nesterov2006cubic,Polyak1964}.
It is also weaker than a strong convexity assumption. 
When $F_{*}$ is achievable (i.e. $F_{*} = F(w_{*})$ for some $w_{*}$), then we can observe that every stationary point $w_{*}$ of the $\tau$-gradient dominant function $F$ is a global minimizer. 
However, such a function $F$ is not necessarily convex. 

The following theorem states the convergence rate of Algorithm~\ref{sgd_replacement} under gradient dominance, whose proof is deferred to Appendix~\ref{apdx:proof_of_Th5_Cor34}.

\begin{thm}\label{thm_nonconvex_03}
Suppose that Assumptions \ref{ass_basic}, \ref{ass_general_bounded_variance}, and \ref{ass_PLcond} hold for \eqref{ERM_problem_01}. 
Let $\sets{ w_i^{(t)}}$ be generated by Algorithm~\ref{sgd_replacement}  for solving \eqref{ERM_problem_01} using $\eta_i^{(t)} := \frac{\eta_t}{n}$ and any shuffling strategy. 
Let  $\eta_t$ be updated as  $\eta_t  := \frac{2}{t + \beta}$ for some $\beta \geq \max\sets{ 2L\sqrt{2(3\Theta + 2)} - 1, 1}$.
Then, for all $t \geq 1$, we have
\begin{equation}\label{eq:nonconvex_03_general}
    \mathbb{E}\big[ F(\tilde{w}_t) - F_{*} \big]  \leq  \frac{1}{(t+\beta - 1)(t + \beta)}\Big[ \beta(\beta-1)\big( F(\tilde{w}_0) - F_{*}\big) \ + \ 768 \cdot \tau^3 L^2\sigma^2 \log(t + \beta)  \Big].
\end{equation}
Consequently, the convergence rate of $\big\{ \mathbb{E}\big[ F(\tilde{w}_t) - F_{*} \big] \big\}$  is $\Ocal\left( \frac{\log(t)}{t^2} \right)$.

If, in addition, $\pi^{(t)}$ is uniformly sampled at random without replacement from $[n]$ and $L\sqrt{2(\Theta/n+1)} \leq 1$, then by choosing $\eta_t := \frac{2}{t + 1 + 1/n}$, for all $t \geq 1$, we have
\begin{equation}\label{eq:nonconvex_03_RR}
    \mathbb{E}\big[ F(\tilde{w}_t) - F_{*} \big] \leq  \frac{2}{n(t+1/n)(t + 1 +  1/n)}\Big[  \big( F(\tilde{w}_0) - F_{*}\big)  \ + \   265 \cdot \tau^3 L^2\sigma^2 \log(t + \beta)  \Big].
\end{equation}
Consequently, the convergence rate of  $\big\{ \mathbb{E}\big[ F(\tilde{w}_t) - F_{*} \big] \big\}$ is $\Ocal\left( \frac{\log(t)}{n t^2} \right)$.
\end{thm}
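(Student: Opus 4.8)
The plan is to reduce the gradient-dominant nonconvex case to the abstract one-step recursion of Lemma~\ref{lem_general_framework}, feeding it the per-epoch descent estimate that already powers Theorem~\ref{thm_nonconvex_01} together with the gradient-dominance inequality \eqref{eq_grad_dominant}. First I would recall, from the proof of Theorem~\ref{thm_nonconvex_01}, the one-epoch bound: writing a full epoch of Algorithm~\ref{sgd_replacement} as $\tilde{w}_t = \tilde{w}_{t-1} - \eta_t\nabla F(\tilde{w}_{t-1}) - \eta_t e_t$ with shuffling error $e_t := \frac1n\sum_{i=1}^n\big[\nabla f(w_{i-1}^{(t)};\pi^{(t)}(i)) - \nabla f(\tilde{w}_{t-1};\pi^{(t)}(i))\big]$, applying the $L$-smoothness of $F$ (a consequence of Assumption~\ref{ass_basic}(ii)) in the form \eqref{eq:Lsmooth}, bounding $\mathbb{E}\big[\norms{e_t}^2\big]$ through $L$-smoothness of the components, a telescoping estimate of the inner-loop drift $\norms{w_{i-1}^{(t)}-\tilde{w}_{t-1}}$, and Assumption~\ref{ass_general_bounded_variance}, and restricting $\eta_t \leq \tfrac{1}{2L\sqrt{2(3\Theta+2)}}$, one gets for every $t\geq1$
\[
\mathbb{E}\big[F(\tilde{w}_t)-F_{*}\big] \;\leq\; \mathbb{E}\big[F(\tilde{w}_{t-1})-F_{*}\big] \;-\; \tfrac{\eta_t}{4}\,\mathbb{E}\big[\norms{\nabla F(\tilde{w}_{t-1})}^2\big] \;+\; \tfrac{3}{2}\,L^2\sigma^2\,\eta_t^3 ,
\]
which telescopes to \eqref{eq_thm_nonconvex_01}; since $\eta_t$ is decreasing, the condition $\beta \geq 2L\sqrt{2(3\Theta+2)}-1$ guarantees the stepsize restriction for all $t$.

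Next I would use Assumption~\ref{ass_PLcond} in the form $\norms{\nabla F(\tilde{w}_{t-1})}^2 \geq \tfrac1\tau\big(F(\tilde{w}_{t-1})-F_{*}\big)$ and substitute it into the descent inequality. Putting $Y_t := \mathbb{E}\big[F(\tilde{w}_{t-1})-F_{*}\big]$ (so $Y_1 = F(\tilde{w}_0)-F_{*}$), this produces exactly the hypothesis \eqref{eq:induc_ineq} of Lemma~\ref{lem_general_framework},
\[
Y_{t+1} \;\leq\; \Big(1 - \tfrac{\eta_t}{4\tau}\Big)Y_t \;+\; \tfrac{3}{2}\,L^2\sigma^2\,\eta_t^3 ,
\]
with $q=2$, $\rho\propto 1/\tau$, and $D\propto L^2\sigma^2$. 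Taking the diminishing stepsize prescribed in the theorem — which is precisely the special stepsize $\eta_t=\tfrac{q}{\rho(t+\beta)}$ of that lemma, admissible because $\beta\geq 1 = q-1$ — conclusion \eqref{eq_rate_t2} with $q=2$ reads $Y_{t+1}\leq \tfrac{\beta(\beta-1)}{(t+\beta-1)(t+\beta)}Y_1 + \tfrac{768\,\tau^3L^2\sigma^2\log(t+\beta)}{(t+\beta-1)(t+\beta)}$, the factor $\tau^3$ arising from the $\rho^{-(q+1)}=\rho^{-3}$ term; this is \eqref{eq:nonconvex_03_general}, hence the $\Ocal(\log(t)/t^2)$ rate.

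For the uniformly-random-reshuffling part I would repeat the same three steps, but starting from the sharper per-epoch descent behind \eqref{eq_thm_nonconvex_01_RR}, in which the noise term improves to $\tfrac{L^2\sigma^2}{n}\eta_t^3$ and the stepsize restriction relaxes to $\eta_t\leq\tfrac{1}{L\sqrt{2(\Theta/n+1)}}$ (implied by $L\sqrt{2(\Theta/n+1)}\leq1$ together with $\beta=1+1/n$). Applying Lemma~\ref{lem_general_framework} with $D\propto L^2\sigma^2/n$ and $\beta=1+1/n$, the initial-condition prefactor becomes $\tfrac{\beta(\beta-1)}{(t+\beta-1)(t+\beta)}=\tfrac{(1+1/n)(1/n)}{(t+1/n)(t+1+1/n)}\leq\tfrac{2}{n(t+1/n)(t+1+1/n)}$, so one factor $1/n$ comes from this prefactor and the second from $D\propto1/n$ on the noise term, which yields \eqref{eq:nonconvex_03_RR} and the $\Ocal(\log(t)/(nt^2))$ rate. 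The remark after Assumption~\ref{ass_PLcond} that stationary points of a gradient-dominant $F$ are global minimizers is purely contextual and is not invoked here; only the inequality \eqref{eq_grad_dominant} enters.

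The main obstacle is Step~1, the per-epoch descent: because the inner iterates $w_i^{(t)}$ are correlated, bounding $\mathbb{E}\big[\norms{e_t}^2\big]$ requires a careful telescoping of the inner-loop drift and, for the reshuffling refinement, a genuine use of the sampling-without-replacement structure to gain the factor $1/n$. Since that estimate is already available from the analysis of Theorem~\ref{thm_nonconvex_01}, the work specific to the present theorem is light: the one real point of care is verifying that the prescribed diminishing stepsize meets both $\eta_t\leq1/\rho$ and the smoothness restriction for every $t$ — which is exactly what the lower bounds on $\beta$ encode — plus the routine bookkeeping that converts the generic bound \eqref{eq_rate_t2} into the explicit constants stated in \eqref{eq:nonconvex_03_general} and \eqref{eq:nonconvex_03_RR}.
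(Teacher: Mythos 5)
Your proposal follows essentially the same route as the paper's own proof: it takes the per-epoch descent inequality $\mathbb{E}[F(\tilde{w}_t)-F_{*}] \leq \mathbb{E}[F(\tilde{w}_{t-1})-F_{*}] - \frac{\eta_t}{4}\mathbb{E}[\norms{\nabla F(\tilde{w}_{t-1})}^2] + D\eta_t^3$ (with $D=\frac{3}{2}L^2\sigma^2$ in general and $D=\frac{L^2\sigma^2}{n}$ under randomized reshuffling) already established for Theorems~\ref{thm_nonconvex_01} and \ref{thm_nonconvex_04}, inserts the gradient-dominance bound to obtain $Y_{t+1}\leq(1-\frac{\eta_t}{4\tau})Y_t + D\eta_t^3$, and invokes Lemma~\ref{lem_general_framework} with $q=2$ and $\rho=\frac{1}{4\tau}$, which is exactly how the paper derives \eqref{eq:nonconvex_03_general} and \eqref{eq:nonconvex_03_RR}, including your accounting of the $\tau^3$ factor from $\rho^{-3}$ and the two sources of the $1/n$ gain in the reshuffling case. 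The only cosmetic slip is quoting the stepsize threshold as $\frac{1}{2L\sqrt{2(3\Theta+2)}}$ rather than the paper's $\frac{1}{L\sqrt{2(3\Theta+2)}}$ (which is what the condition $\beta\geq 2L\sqrt{2(3\Theta+2)}-1$ actually enforces), and this does not affect the argument.
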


As we mentioned earlier, the condition $L\sqrt{2(\Theta/n+1)} \leq 1$ for \eqref{eq:nonconvex_03_RR} is not restrictive.
One can always scale $f(\cdot;i)$ to guarantee this condition. 
The rate stated in \eqref{eq:nonconvex_03_general} for the general shuffling strategy is $\widetilde{\Ocal}(1/t^2)$ for any $1 \leq t\leq T$ without fixing $T$ a priori. 
If, in addition, a randomized reshuffling strategy is used, then our rate in \eqref{eq:nonconvex_03_RR} is improved to $\widetilde{\Ocal}(1/(nt^2))$.
Our rates in both cases are for diminishing learning rates.
Note that \citep{haochen2018random} provide $\Ocal(1/(nT^2))$ convergence rate but under a constant learning rate and stronger assumptions, including Lipschitz Hessian continuity.
A recent work in \citep{ahn2020sgd} also considers this gradient dominance case and achieves the $\Ocal(1/(nT^2))$ rate but still requires a bounded gradient condition and using constant learning rate.

\section{Numerical Experiments}\label{sec_experiment}
In this section, we provide various numerical experiments to illustrate the theoretical convergence results of Algorithm~\ref{sgd_replacement} for solving nonconvex problem instances of \eqref{ERM_problem_01}. 
We only focus on the nonconvex setting since the convex case has been intensively studied in previous works, e.g., in \citep{ahn2020sgd,gurbuzbalaban2015random,haochen2018random,mishchenko2020random}.
We implement Algorithm~\ref{sgd_replacement} in Python and compare between different variants. 
Our code is available online at \href{https://github.com/lamnguyen-mltd/shuffling}{\textcolor{blue}{https://github.com/lamnguyen-mltd/shuffling}}.
For each experiment, we conduct 10 runs and reported the average results. 

\subsection{Nonconvex logistic regression example}\label{sec_exp_nonconvex_classification}
We consider the following well-studied binary classification problem with nonconvex $F$:
\begin{equation}\label{loss_experiments}
   \min_{w \in \mathbb{R}^d} \bigg\{ F(w)  := \frac{1}{n} \sum_{i=1}^n \Big[ \log(1 + \exp(- y_i x_i^{\top} w )) + \frac{\lambda}{2} \sum_{j=1}^d \frac{w_j^2}{1 + w_j^2}   \Big]   \bigg\},
\end{equation}
where $w \in \R^d$ is the vector of model parameters and $w_j$ is the element of $w$, $\sets{(x_i, y_i)}_{i=1}^n$ is a set of training examples, and $\lambda > 0$ is a given regularization parameter.

We first conduct experiments to demonstrate the performance of Algorithm~\ref{sgd_replacement} on two classification datasets \textit{w8a} ($n = 49,749$ samples) and  \textit{ijcnn1} ($n = 91,701$) from LIBSVM \citep{LIBSVM}. 
Since we only aim at testing the nonconvexity of each $f_i$ instead of  statistical properties, we simply choose $\lambda := 0.01$, but other values of $\lambda$ also work. 
The input data $x_i$ ($i \in [n]$)  have been scaled in the range of $[0,1]$.

We apply Algorithm~\ref{sgd_replacement}  with mini-batch size of 1 and $\eta_i^{(t)} := \frac{\eta_t}{n}$ to solve \eqref{loss_experiments}, where $\eta_t := \frac{\gamma}{(t+\beta)^{\alpha}}$ and $\pi^{(t)}$ is generated randomly to obtain an SGD variant with randomized reshuffling strategy. 
We experiment using different configurations: $\alpha = \set{1/3, 1/2, 1}$ and $\gamma / n = \set{0.001, 0.005, 0.01}$, respectively on the two datasets: \textit{w8a} and \textit{ijcnn1}. 

Figures~\ref{fig_LR_w8a} and \ref{fig_LR_ijcnn1} show our comparison on the loss value $F(\tilde{w}_t)$ and the test accuracy using different configurations: $\alpha = \set{1/3, 1/2, 1}$ and $\gamma / n = \set{0.001, 0.005, 0.01}$, respectively on the \textit{w8a} and \textit{ijcnn1} datasets. The choices $\alpha = 1$ and $\gamma/n = 0.001$ do not give good training performances, hence we omit them in our plots. 
\begin{figure}[ht!]
\begin{center}
\includegraphics[width=0.49\textwidth]{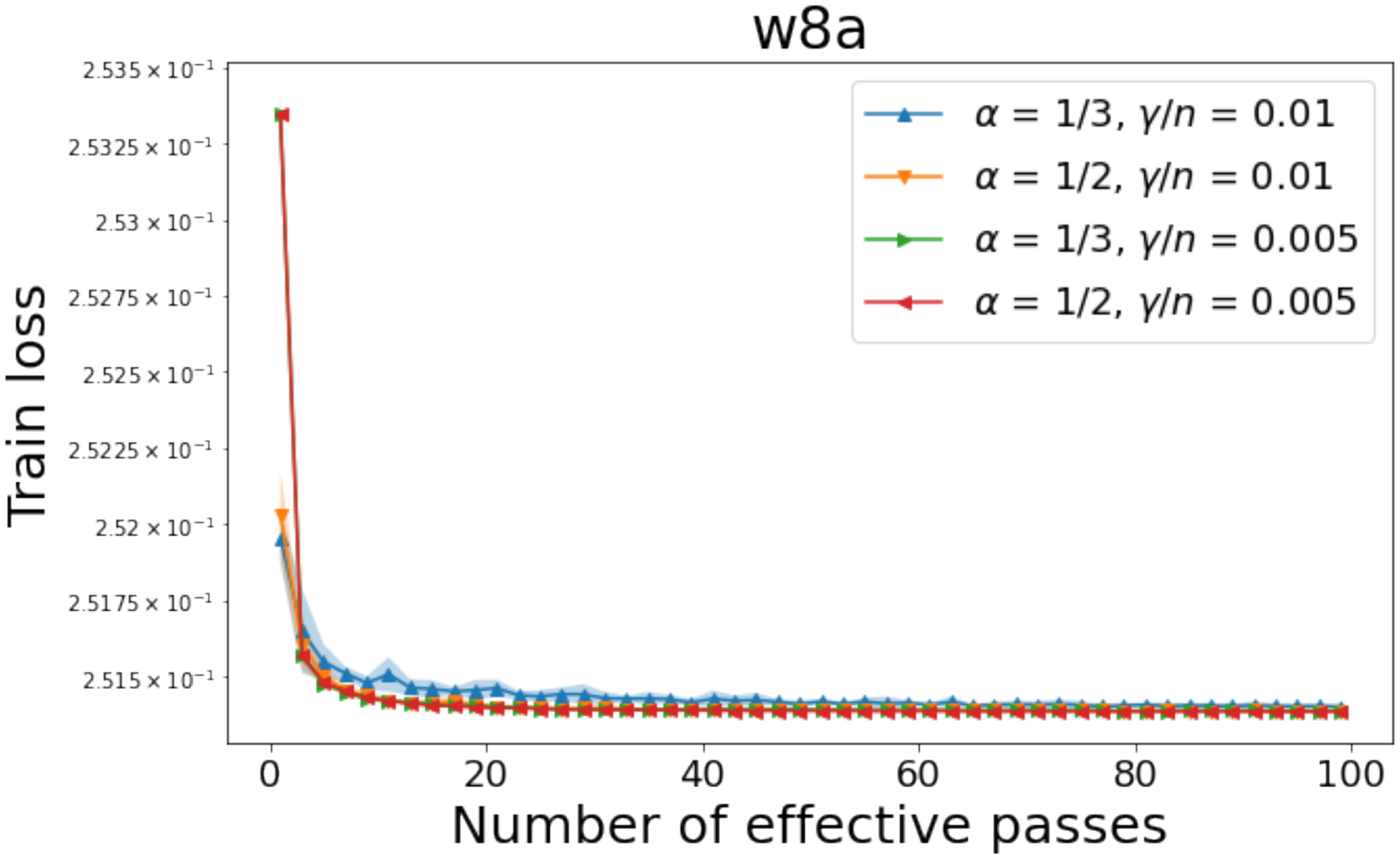}
\includegraphics[width=0.49\textwidth]{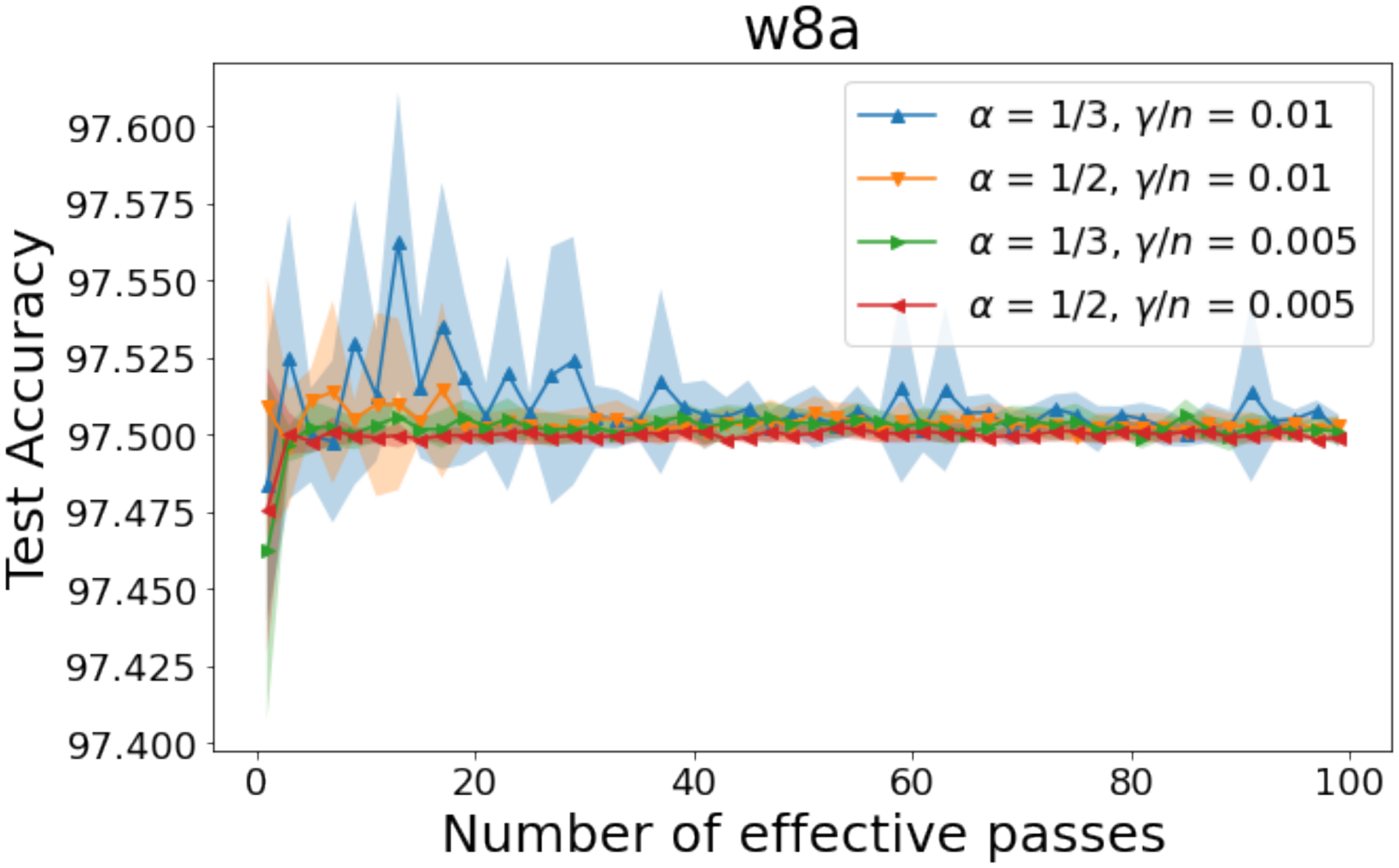}
\caption{The behavior of the train loss $F(\tilde{w}_t)$ and the test accuracy (starting from the $2^{nd}$ epoch) of \eqref{loss_experiments} produced by different values of $\alpha$ and $\gamma / n$ in Algorithm~\ref{sgd_replacement} using the \textit{w8a} dataset.}
\label{fig_LR_w8a}
\end{center}
\end{figure} 
\begin{figure}[ht!]
\vspace{-2ex}
\begin{center}
\includegraphics[width=0.49\textwidth]{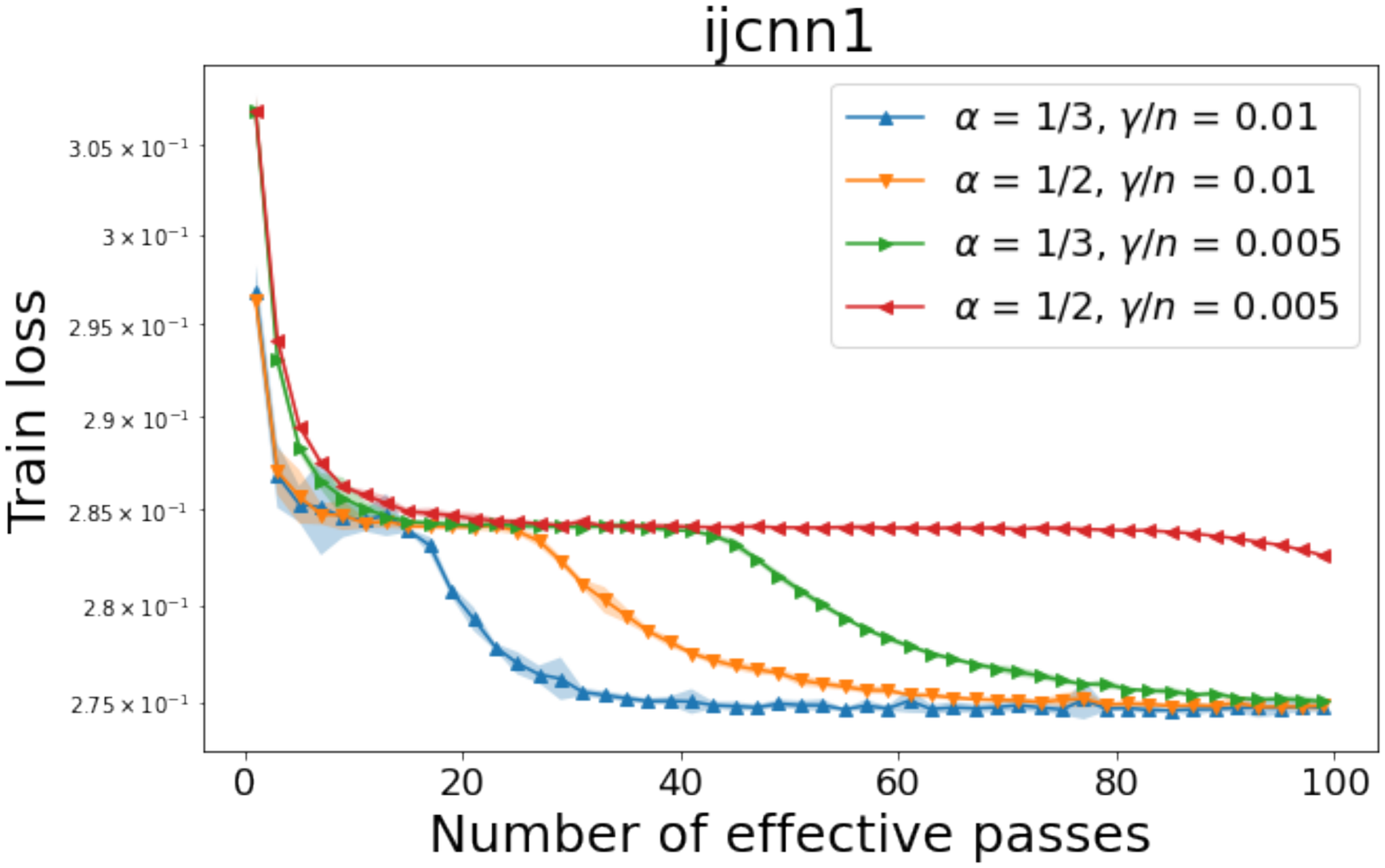}
\includegraphics[width=0.49\textwidth]{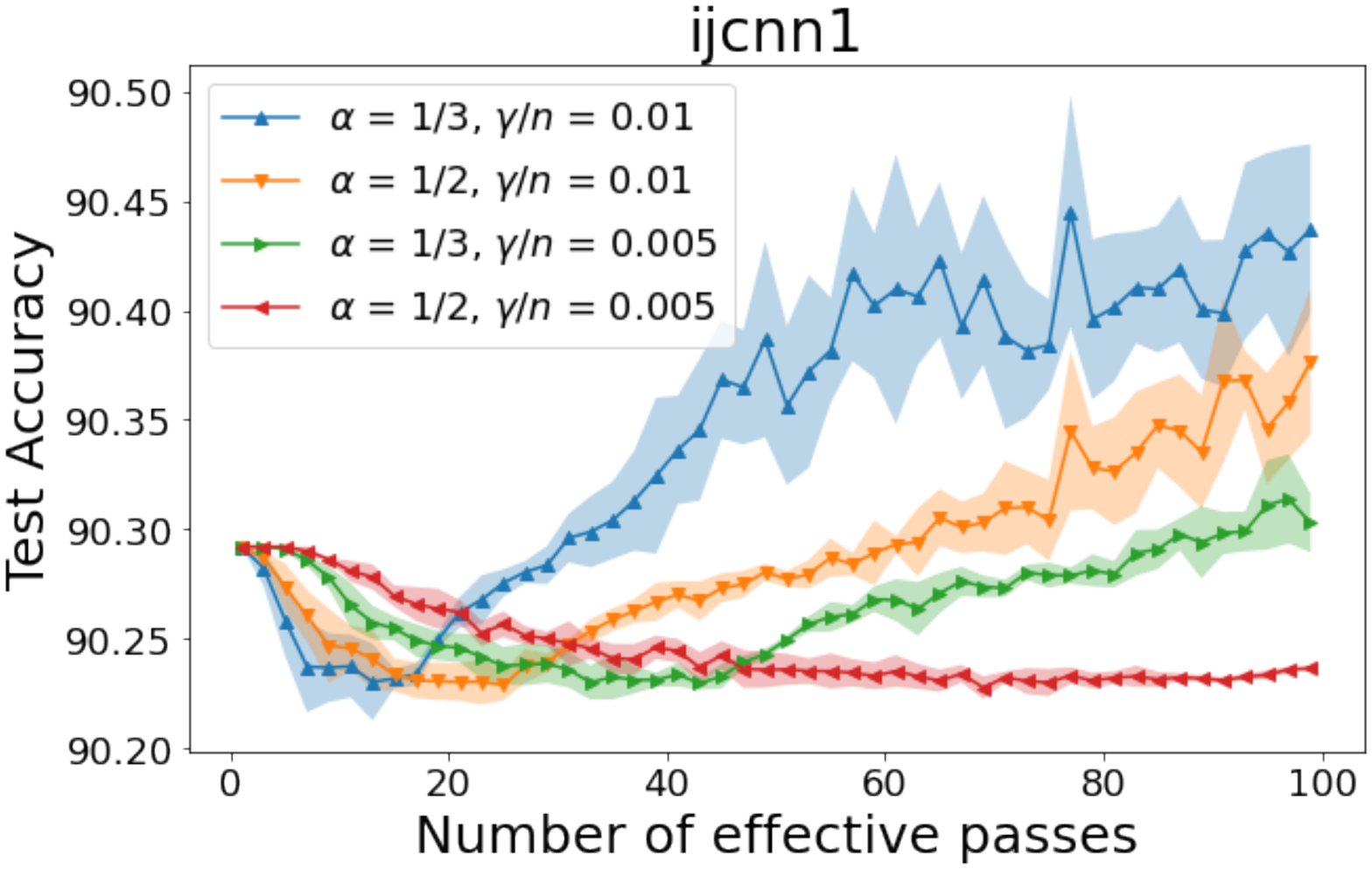}
\caption{The behavior of the train loss $F(\tilde{w}_t)$ and the test accuracy (starting from the $2^{nd}$ epoch) of \eqref{loss_experiments} produced by different values of $\alpha$ and $\gamma / n$ in Algorithm~\ref{sgd_replacement} using the \textit{ijcnn1} dataset.}
\label{fig_LR_ijcnn1}
\end{center}
\end{figure} 

\textbf{Discussion.} 
We observe from Figures~\ref{fig_LR_w8a} and \ref{fig_LR_ijcnn1} that the value $\alpha := 1/3$ used in the learning rate of Algorithm~\ref{sgd_replacement} usually gives the best performance.
If we fix $\alpha := 1/3$ and use different ratios $\gamma/n$, then as showed in these plots, $\gamma/n = 0.01$ seems to work best. 
Note that we plotted the confidence intervals in every figure, however these intervals for train loss can barely be seen because the loss values for different random seeds do not deviate much from the mean value in this experiment. 
The configuration $\alpha = 1/3$ and $\gamma/n = 0.01$ for the datasets \textit{w8a} and \textit{ijcnn1} has larger confidence intervals for the test accuracy and can be seen easily in Figures~\ref{fig_LR_w8a} and \ref{fig_LR_ijcnn1}.

\subsection{Fully connected neural network training example}\label{sec_exp_neural_network}
Our second example is to test Algorithm~\ref{sgd_replacement} on a neural network training problem. 
We perform this test on a fully connected neural network with two hidden layers of $300$ and $100$ nodes, followed by a fully connected output layer which fits into a soft-max cross-entropy loss. 
We use PyTorch to train this model on the well-known \texttt{MNIST} dataset with $n = 60,000$ \citep{MNIST}. 
This data set has $10$ classes corresponding to $10$ soft-max output nodes.  
We also conduct another test on the \texttt{CIFAR-10} dataset ($n = 50,000$ samples and $10$ classes) \citep{CIFAR10}. We scale the datasets by standardization.  
To accelerate the performance, we run Algorithm~\ref{sgd_replacement} with a mini-batch size of $256$ instead of single sample.
We use the same setting as in the previous experiment and do not use any weight decay or any data augmentation techniques.

We apply Algorithm~\ref{sgd_replacement} with a learning rate $\eta_i^{(t)} := \frac{\eta_t}{n}$, where $\eta_t := \frac{\gamma}{(t+\beta)^{\alpha}}$ to solve this training problem. 
We repeatedly run the algorithm $10$ times and report the average results in our figures.
These plots compare the algorithms on different values of $\alpha = \set{1/3, 1/2, 1}$ and $\gamma / n = \set{0.01, 0.05, 0.1, 0.5}$, respectively, on the two datasets. 

Figures~\ref{fig_NN_mnist} and \ref{fig_NN_cifar10} show our comparison on the loss value $F(\tilde{w}_t)$ and test accuracy using \texttt{MNIST} and \texttt{CIFAR-10} datasets, respectively. 
For each dataset, we only plot the results of experiments that yield the best  training performance. 

\begin{figure}[ht!]
\begin{center}
\includegraphics[width=0.49\textwidth]{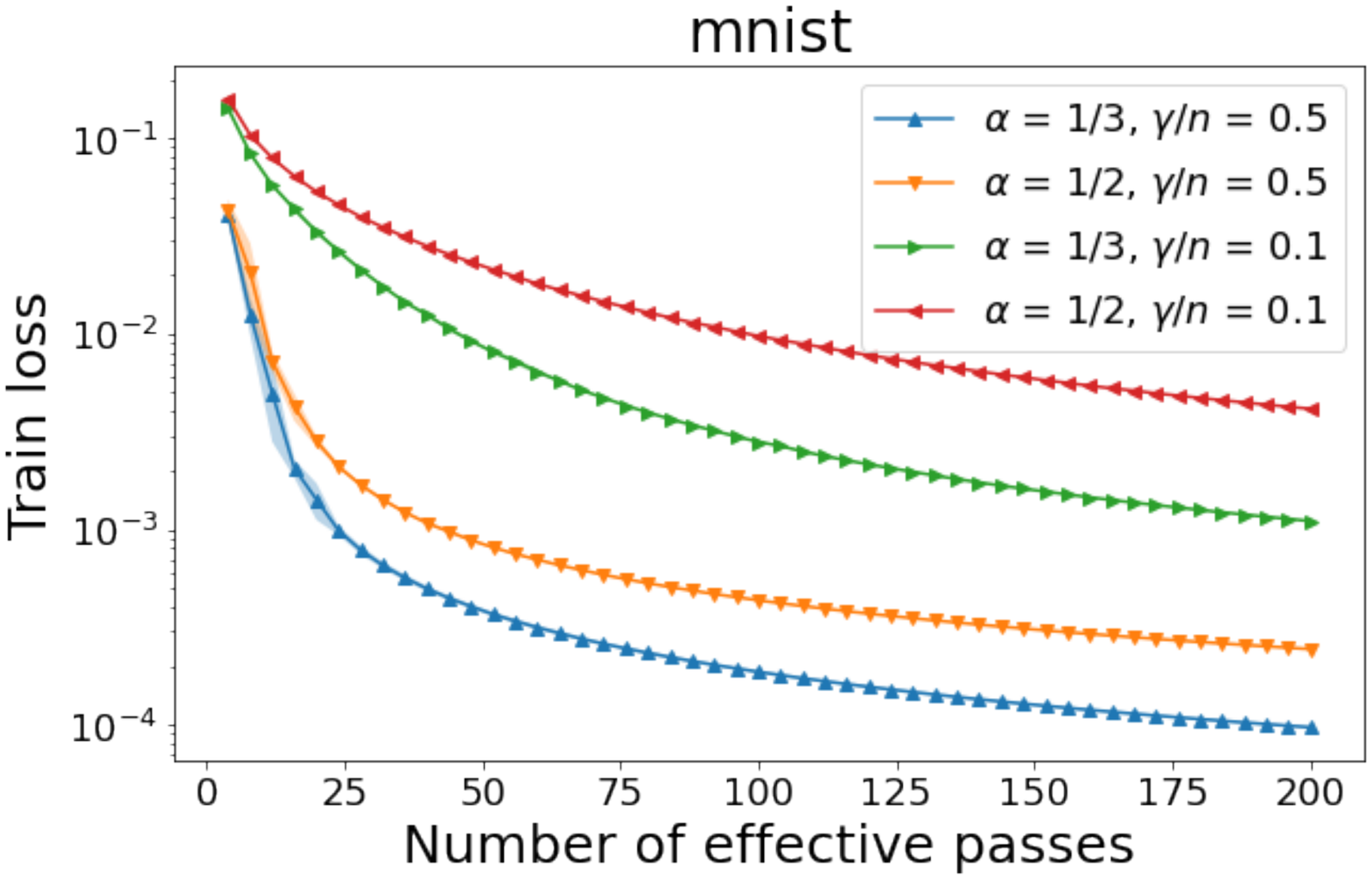}
\includegraphics[width=0.49\textwidth]{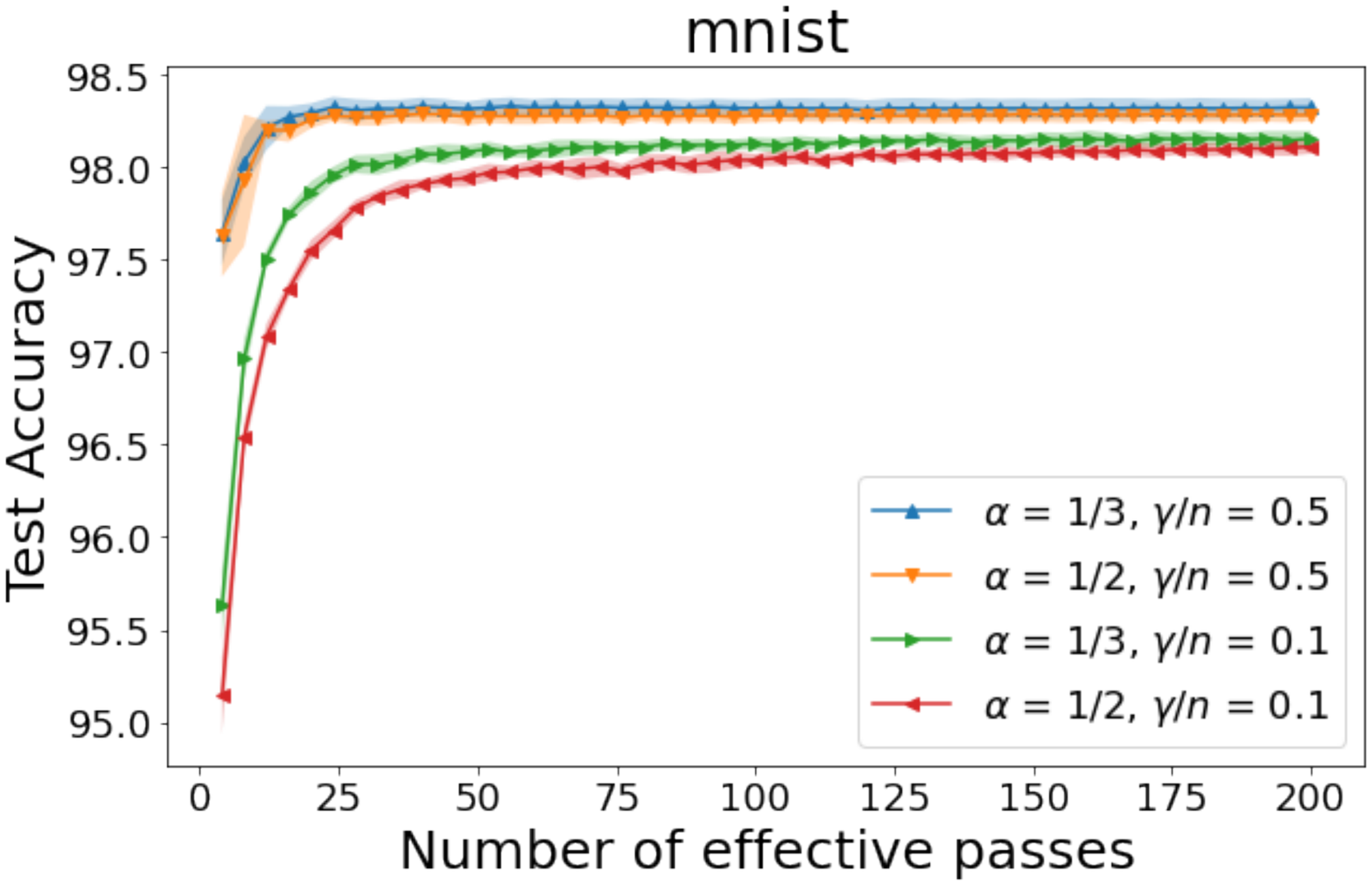}
\caption{The behavior of the train loss $F(\tilde{w}_t)$ and the test accuracy (from the $4^{th}$ epoch) produced by Algorithm~\ref{sgd_replacement} for solving a neural network training problem  on different values of $\alpha$ and $\gamma / n$ using the \textit{MNIST} dataset.}
\label{fig_NN_mnist}
\end{center}
\end{figure} 
\begin{figure}[ht!]
\vspace{-2ex}
\begin{center}
\includegraphics[width=0.49\textwidth]{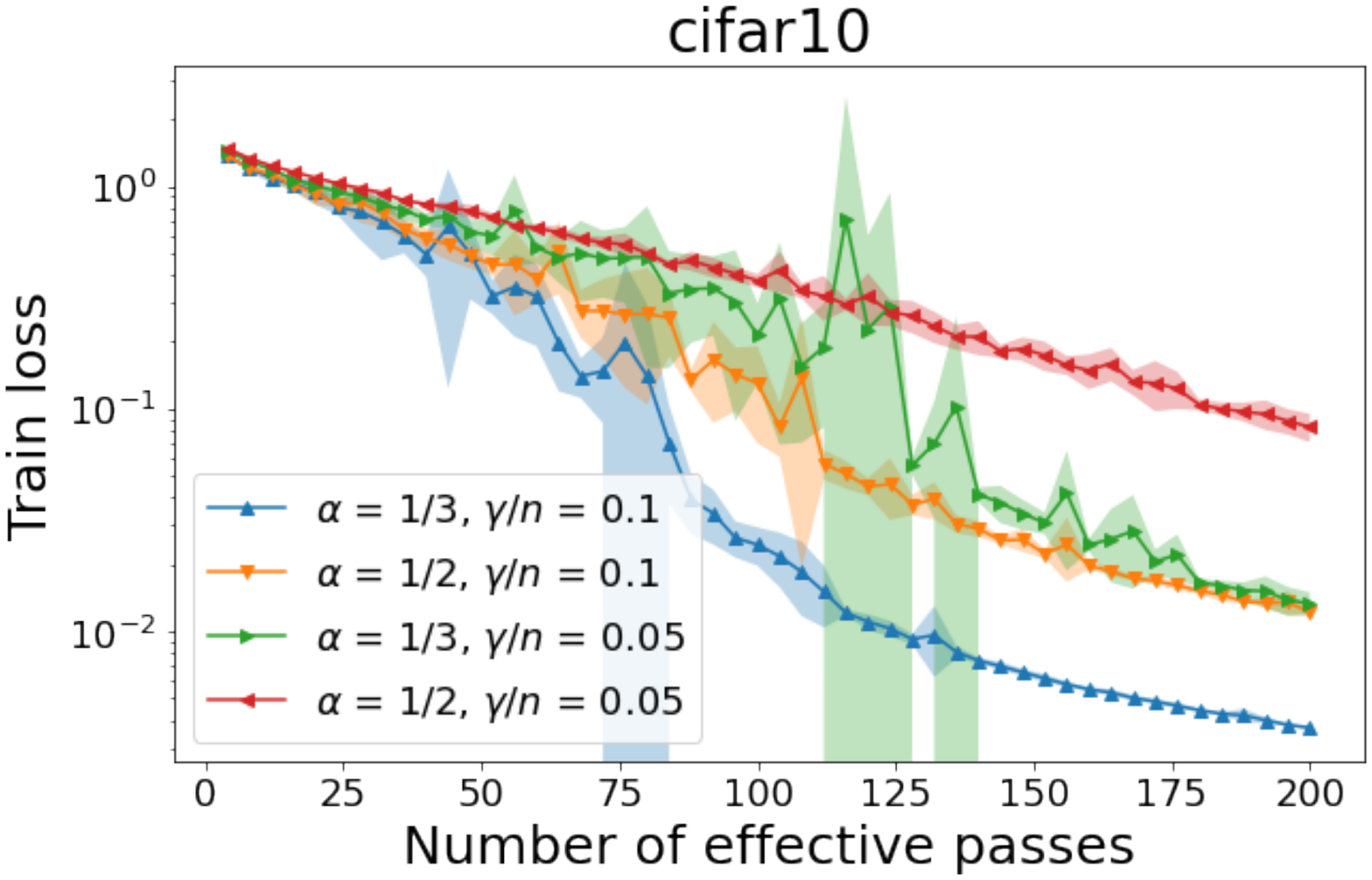}
\includegraphics[width=0.49\textwidth]{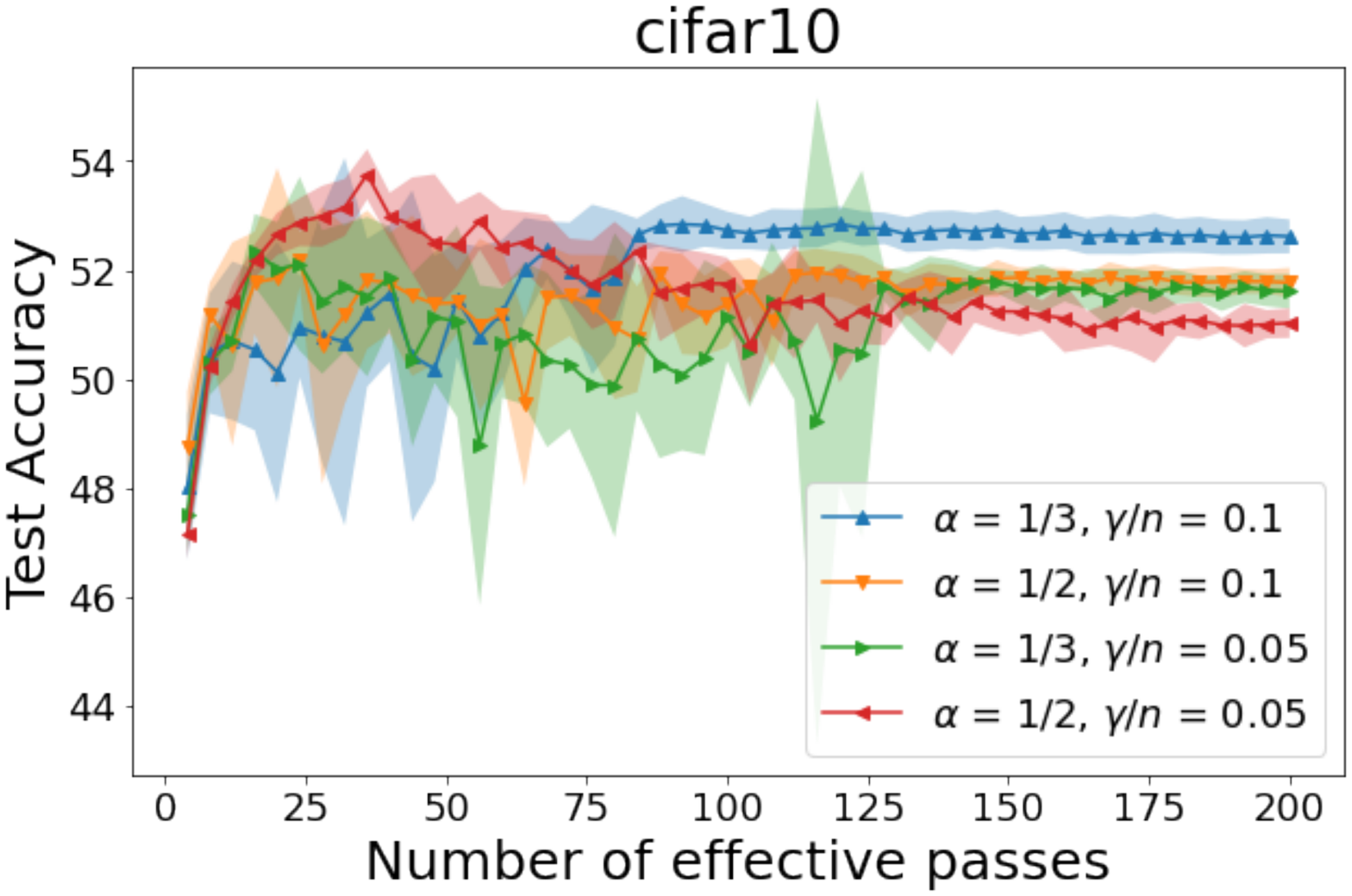}
\caption{The behavior of the train loss $F(\tilde{w}_t)$ and the test accuracy (from the $4^{th}$ epoch) produced by Algorithm~\ref{sgd_replacement} for solving a neural network training problem  on different values of $\alpha$ and $\gamma / n$ using the \textit{CIFAR-10} dataset.}
\label{fig_NN_cifar10}
\end{center}
\end{figure} 

\textbf{Discussion.} We observe again from Figures \ref{fig_NN_mnist} and \ref{fig_NN_cifar10} that $\alpha = 1/3$ works best when fixing $\gamma/n$. 
Once we fix $\alpha := 1/3$ and test on $\gamma/n$, the ratios  $\gamma/n=0.5$ and $\gamma/n=0.1$ give the best performance for the \texttt{MNIST} and \texttt{CIFAR-10} datasets, respectively. Similarly to the nonconvex logistic regression datasets, the train loss values and the test accuracy for \texttt{MNIST} do not deviate much in most cases. 
On the other hand, the \texttt{CIFAR-10} dataset's performance is known to be noisy and we observe this phenomenon again in our experiments. 
The test accuracy for the choice $\alpha := 1/3$ is not ideal in the early stage.
However, it gives the best performance and becomes stable toward the end of the training process.

\subsection{Comparing different shuffling schemes}
In this last subsection, we compare different shuffling strategies for Algorithm \ref{sgd_replacement}: Random Reshuffling (RR), Shuffle Once (SO), and Incremental Gradient (IG). 
The behaviors of Algorithm~\ref{sgd_replacement} for the \texttt{CIFAR-10} dataset are particularly interesting since its optimization problem is challenging to train. 
We experimented with the same neural network model as in the previous experiment, but with a constant learning rate $\eta_i^{(t)} = \eta_t/n := 0.05$. 
The train loss $F(\tilde{w}_t)$  and the test accuracy are shown in Figure \ref{fig_cifar10_schemes}. 

\begin{figure}[ht!]
\centering
\includegraphics[width=0.49\textwidth]{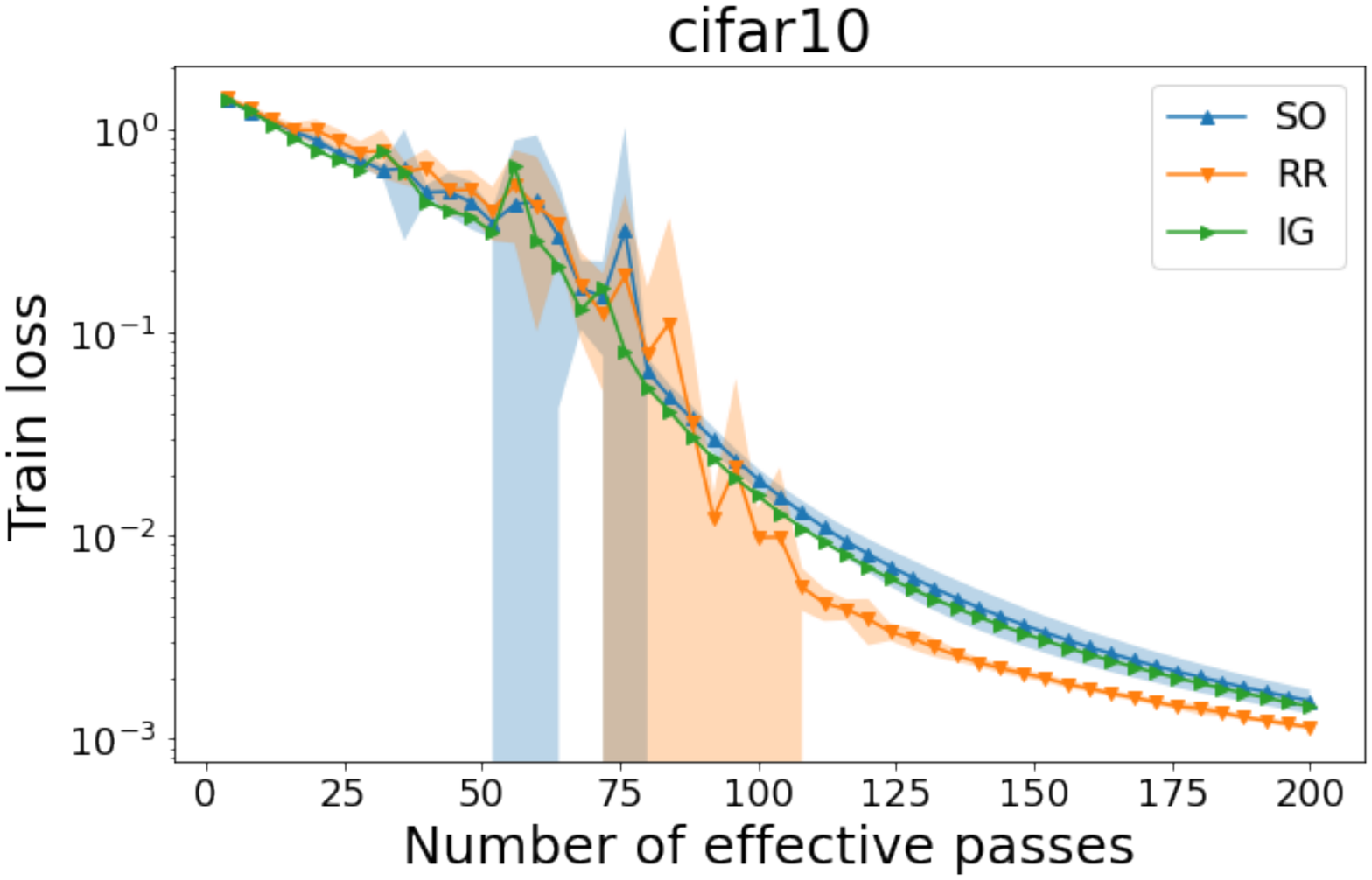}
\includegraphics[width=0.49\textwidth]{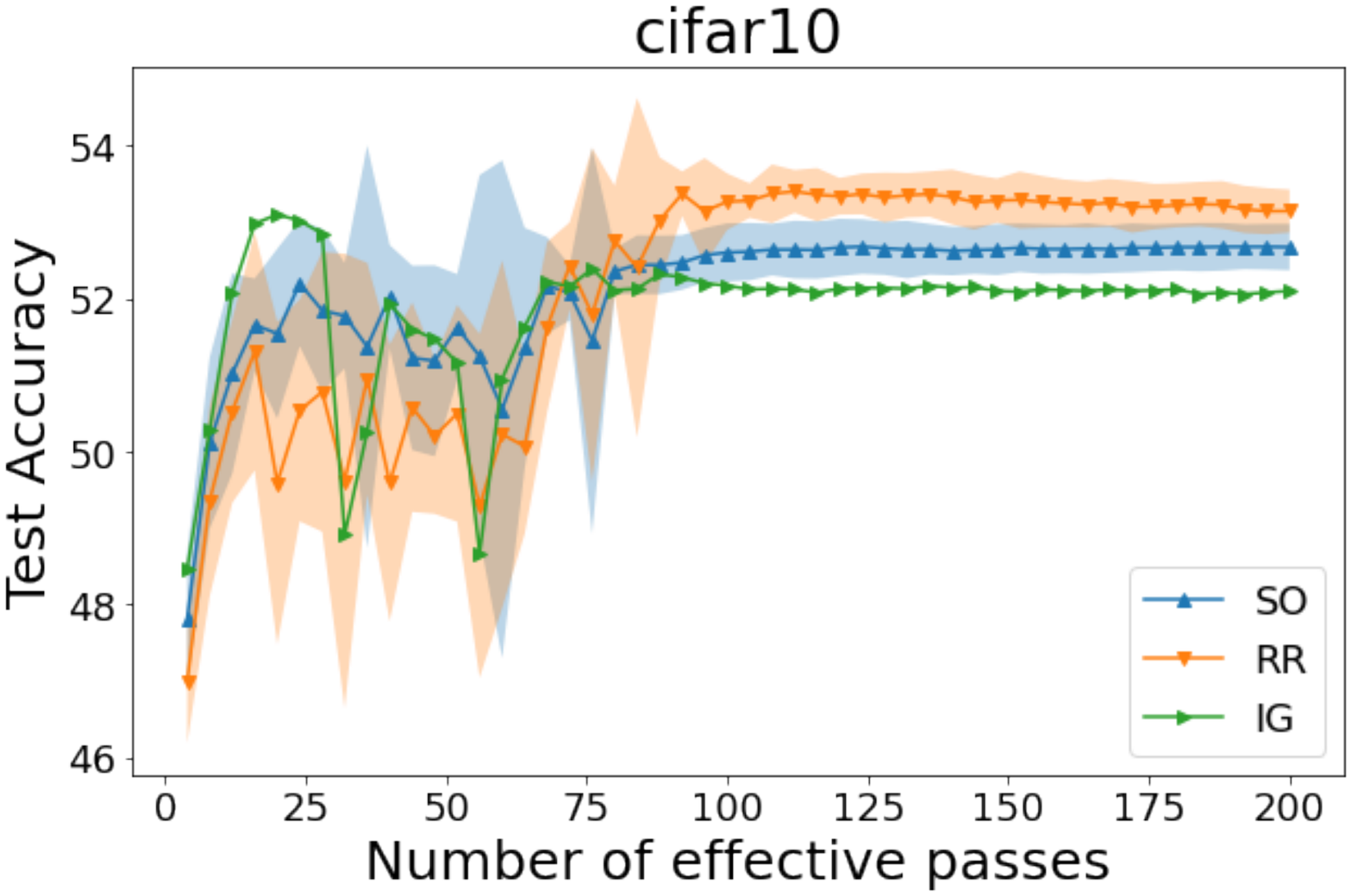}
\caption{The train loss $F(w)$ and test accuracy produced by Algorithm~\ref{sgd_replacement} for the neural network training problem using different shuffling schemes on the \texttt{CIFAR-10} dataset.}
\label{fig_cifar10_schemes}
\end{figure} 

Note that we start all experiments with the same initialization and run these algorithms for $10$ different random seeds. 
The only exception is the Incremental Gradient (IG) scheme where a deterministic permutation $\pi^{(t)}  := \{1, 2, \cdots, n \}$ is used for all $t\geq 1$. 
For this reason, the IG scheme has only one run and we do not see its confidence intervals in Figure \ref{fig_cifar10_schemes}. 

\textbf{Discussion.} 
We observe that the Random Reshuffling scheme works efficiently toward the end of the training process. 
All shuffling schemes we test here are comparable at the early stage, but the deviation seems to decrease along the training epochs. 

\section{Conclusions}\label{sec_conclusion}
We have conducted an intensive convergence analysis for a wide class of shuffling-type gradient methods for solving a finite-sum minimization problem.
In the strongly convex case, we have established $\Ocal(T^{-2})$ convergence rate under just strong convexity of the sum function and the smoothness for any shuffling strategy. 
When a randomized reshuffling strategy is used, our rate has been improved to $\Ocal(n^{-1}T^{-2})$, matching the results in the literature but under different assumptions.
For the nonconvex case, we have proved a non-asymptotic $\Ocal(T^{-2/3})$ convergence rate of our algorithm with any shuffling strategy under standard assumptions, which is significantly better than some previous works such as \citep{li2019incremental}.
When a randomized reshuffling strategy is used, our rate has been improved to $\Ocal(n^{-1/3}T^{-2/3})$, matching the recent result in \citep{mishchenko2020random}.
We have also considered these rates in both constant and diminishing learning rates, and investigated an asymptotic convergence.
We believe that our results provide a unified analysis for shuffling-type algorithms using both randomized and deterministic sampling strategies, where it covers the well-known incremental gradient scheme as a special case.
We have conducted different numerical experiments to highlight some theoretical aspects of our results.
We believe that our  analysis framework could be extended to study non-asymptotic convergence rates of SGDs and minimax algorithms, including adaptive SGD variants such as Adam \citep{KingmaB14} and AdaGrad \citep{AdaGrad} under shuffling strategies.

\section*{Acknowledgements}\label{sec_acknowledgement}
The authors would like to thank Trang H. Tran for her valuable comments on some technical proofs and her great help during the revision of this paper. The work of Q. Tran-Dinh has partly been supported by the National Science Foundation (NSF), grant no. DMS-1619884, the Office of Naval Research (ONR), grant no. N00014-20-1-2088 (2020-2023), and The Statistical and Applied Mathematical Sciences Institute (SAMSI).

\appendix
\section{Key Technical Lemmas for Convergence Analysis}\label{sec:apdx:A1}
This appendix provides the full proof of different technical lemmas using for our convergence analysis in the entire paper.
However, let us first outline the key idea of our analysis.

\subsection{The outline of our convergence analysis}\label{subsec:outline_proof}
Let us briefly outline the key steps of our convergence analysis to help the readers easily follow our main proofs.
\begin{compactitem}
\item The key step of our analysis is to form a ``quasi-descent'' inequality between $\mathbb{E}\big[ F(\tilde{w}_t) - F_{*}\big]$,  $\eta_t\mathbb{E}\big[\Vert\nabla{F}(\tilde{w}_{t-1})\Vert^2\big]$, and $\eta_t^3$ as, e.g., in \eqref{eq_rec_01b} or \eqref{eq_001}, relying on \eqref{eq_key_thm_01_est} of  Lemma~\ref{le:F_bound}.
\item For the strongly convex case, we can form a ``quasi-descent'' inequality (see \eqref{eq:scvx_key_bound2} of Lemma~\ref{le:scvx_key_bound2}) between $\mathbb{E}\big[ F(\tilde{w}_t) - F_{*}\big]$, $\mathbb{E}\big[ \norms{\tilde{w}_t - w_{*}}^2 \big]$, and $\eta_t^3$.
\item To obtain such a desired bound, we need to upper bound the average deviation $\frac{1}{n}\sum_{j=0}^{n-1}\norms{w_j^{(t)} - w_0^{(t)}}^2$ between the inner iterates $w_j^{(t)}$ and its epoch iterate $w^{(t)}_0$ via $\Vert\nabla{F}(\tilde{w}_{t-1})\Vert^2$ as in Lemma~\ref{le:ncvx_keybound1}.
\item The final step is to apply either Lemma~\ref{lem_general_framework} or Lemma~\ref{lem_general_framework_02} to obtain our results.
\end{compactitem}
To improve our convergence rate for uniformly randomized reshuffling variants, we exploit Lemma~\ref{le:lem_key_rr} below from  \cite[Lemma 1]{mishchenko2020random}.

\subsection{General lemmas}\label{subsec:general_results}
In order to improve our theoretical results for randomized reshuffling variants, we will use \cite[Lemma 1]{mishchenko2020random}, which is stated as follows. 

\begin{lem}\label{le:lem_key_rr}
Let $X_1, \cdots, X_n$ be $n$ given vectors in $\R^d$, $\bar{X} := \frac{1}{n} \sum_{i=1}^n X_i$ be their average, and $\sigma^2 := \frac{1}{n} \sum_{i=1}^n \|X_i -\bar{X}\|^2$ 
be their population variance. 
Fix any $k \in \{1,\cdots, n\}$, let $X_{\pi_1}, \cdots, X_{\pi_k}$ be sampled uniformly without replacement from $\{X_1, \cdots, X_n\}$ and $\bar{X}_{\pi} := \frac{1}{k}\sum_{i=1}^k X_{\pi_i}$ be their average. 
Then, we have
\begin{align*}
    \mathbb{E} [\bar{X}_\pi] = \bar{X} \qquad \text{and} \qquad \mathbb{E} \left[ \|\bar{X}_\pi - \bar{X}\|^2 \right] = \frac{n-k}{k(n-1)} \sigma^2.
\end{align*}
\end{lem}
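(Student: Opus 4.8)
\textbf{Proof proposal for Lemma~\ref{le:lem_key_rr}.}

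The plan is to treat sampling without replacement as drawing from a finite population and use standard identities for the variance of a sample mean in that setting. First I would establish the mean formula: by linearity of expectation, $\mathbb{E}[\bar X_\pi] = \frac{1}{k}\sum_{i=1}^k \mathbb{E}[X_{\pi_i}]$, and since each $\pi_i$ is marginally uniform over $\{1,\dots,n\}$ (by symmetry of the without-replacement scheme), $\mathbb{E}[X_{\pi_i}] = \bar X$ for every $i$, hence $\mathbb{E}[\bar X_\pi] = \bar X$. Without loss of generality I would also reduce to the centered case by replacing $X_i$ with $X_i - \bar X$, so that $\bar X = 0$ and $\sigma^2 = \frac1n\sum_i \|X_i\|^2$; this does not change $\bar X_\pi - \bar X$.

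For the second moment, expand
\begin{equation*}
\mathbb{E}\big[\|\bar X_\pi\|^2\big] = \frac{1}{k^2}\sum_{i=1}^k \mathbb{E}\big[\|X_{\pi_i}\|^2\big] + \frac{1}{k^2}\sum_{i\neq j}\mathbb{E}\big[\langle X_{\pi_i}, X_{\pi_j}\rangle\big].
\end{equation*}
The diagonal terms each equal $\frac1n\sum_{l=1}^n \|X_l\|^2 = \sigma^2$, giving a contribution $\frac{1}{k}\sigma^2$. For the off-diagonal terms, by symmetry each pair $(\pi_i,\pi_j)$ with $i\neq j$ is uniform over ordered pairs of distinct indices, so $\mathbb{E}[\langle X_{\pi_i},X_{\pi_j}\rangle] = \frac{1}{n(n-1)}\sum_{l\neq m}\langle X_l, X_m\rangle$. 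The key trick is the identity $\sum_{l\neq m}\langle X_l,X_m\rangle = \big\|\sum_l X_l\big\|^2 - \sum_l\|X_l\|^2 = 0 - n\sigma^2 = -n\sigma^2$, using that the $X_l$ are centered. Hence each off-diagonal expectation equals $-\frac{\sigma^2}{n-1}$, and there are $k(k-1)$ of them, contributing $-\frac{k-1}{k(n-1)}\sigma^2$. Adding the two pieces yields
\begin{equation*}
\mathbb{E}\big[\|\bar X_\pi - \bar X\|^2\big] = \frac{\sigma^2}{k} - \frac{(k-1)\sigma^2}{k(n-1)} = \frac{(n-1) - (k-1)}{k(n-1)}\sigma^2 = \frac{n-k}{k(n-1)}\sigma^2,
\end{equation*}
which is the claimed formula.

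I do not anticipate a serious obstacle here; the only point requiring care is justifying the exchangeability/symmetry claims — namely that under uniform sampling without replacement the marginal law of each $\pi_i$ is uniform on $[n]$ and the joint law of each distinct pair $(\pi_i,\pi_j)$ is uniform on ordered distinct pairs. This follows because any permutation of the sampling order leaves the distribution of the unordered sampled set (and hence these low-order marginals) invariant, so it is a one-line symmetry argument rather than a computation. Everything else is the centering reduction plus the algebraic identity $\big\|\sum_l X_l\big\|^2 = \sum_l \|X_l\|^2 + \sum_{l\neq m}\langle X_l,X_m\rangle$.
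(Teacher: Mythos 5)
Your proof is correct. Note that the paper itself does not prove this lemma: it is imported verbatim from Lemma~1 of \citep{mishchenko2020random}, so there is no in-paper argument to compare against. Your computation --- centering the vectors, splitting $\mathbb{E}\big[\|\bar X_\pi\|^2\big]$ into diagonal and off-diagonal terms, using exchangeability to reduce each off-diagonal expectation to $\frac{1}{n(n-1)}\sum_{l\neq m}\langle X_l,X_m\rangle = -\frac{\sigma^2}{n-1}$, and recombining --- is the standard finite-population sampling-variance derivation and is exactly the argument given in the cited source; the symmetry claims you flag (uniform marginals and uniform ordered-pair marginals under sampling without replacement) are indeed the only points needing justification, and your one-line invariance argument suffices.
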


Now, let us prove Lemma~\ref{lem_general_framework} and Lemma~\ref{lem_general_framework_02} in Subsection~\ref{sec:basic_tools} of the main text.

\vspace{1ex}
\begin{proof}[\mytxtbi{The proof of Lemma~\ref{lem_general_framework}}]
Let us choose $\eta_t := \frac{q}{\rho(t + \beta)}$ for all $t \geq 1$, where $\beta \geq q-1$.
Then, $0 < \rho\eta_t \leq 1$ for all $t\geq 1$.
From the condition $Y_{t+1} \leq (1 - \rho \cdot \eta_t)Y_t + D\cdot \eta_t^{q+1}$ in \eqref{eq:induc_ineq}, by induction, we can show that
\begin{equation}\label{eq:lm1_estimate21}
Y_{t+1} \leq \prod_{i=1}^t(1-\rho \cdot \eta_i) Y_1 + D\sum_{i=1}^t \eta_i^{q+1}\prod_{j=i+1}^t(1 - \rho \cdot \eta_j).
\end{equation}
Using $\eta_t := \frac{q}{\rho(t + \beta)}$, we can directly compute the first coefficient as
\begin{equation*}
\arraycolsep=0.2em
\begin{array}{lcl}
C_t & := & \prod_{i=1}^t(1 - \rho \cdot \eta_i) = \prod_{i=1}^t(1 - \frac{q}{i + \beta}) = \prod_{i=1}^t\frac{i + \beta - q}{i + \beta}  \vspace{1ex}\\
& = & \frac{\beta + 1 - q}{\beta + 1}\cdot \frac{\beta + 2 - q}{\beta + 2} \cdots \frac{\beta + 1}{\beta + q +  1} \cdots \frac{t + \beta - q}{t + \beta} \vspace{1ex}\\
& = & \frac{(\beta + 1 - q)\cdots \beta}{(t+\beta - q + 1)\cdots (t+ \beta)}.
\end{array}
\end{equation*}
Similarly, we can show that, for any $1 \leq i \leq t$, we have
\begin{equation*}
\arraycolsep=0.2em
\begin{array}{lcl}
E_{i,t} &:= & \eta_i^{q+1}\prod_{j=i+1}^t(1 - \rho \cdot \eta_j) = \frac{q^{q+1}}{\rho^{q+1}(i+\beta)^{q+1}} \cdot \frac{(i+\beta + 1 - q)\cdots (i + \beta)}{(t+\beta + 1 - q)\cdots (t+\beta)} \vspace{1ex}\\
& \leq & \frac{q^{q+1}}{\rho^{q+1}(t+\beta+ 1 - q) \cdots (t+\beta)} \cdot \frac{1}{(i+\beta)}.
\end{array}
\end{equation*}
Therefore, we obtain 
\begin{equation*}
\arraycolsep=0.2em
\begin{array}{lcl}
\sum_{i=1}^t\eta_i^{q+1}\prod_{j=i+1}^t(1 - \rho \cdot \eta_j) & = & \sum_{i=1}^tE_{i,t} \leq  \frac{q^{q+1}}{\rho^{q+1}(t+\beta + 1 - q)\cdots (t+\beta)}\sum_{i=1}^t\frac{1}{i+\beta} \vspace{1ex}\\
& \leq &  \frac{q^{q+1}\log(t + \beta)}{\rho^{q+1}(t+\beta-q+1)\cdots (t+\beta)} 
\end{array}
\end{equation*}
Substituting this sum and $C_t$ above into \eqref{eq:lm1_estimate21}, we finally obtain \eqref{eq_rate_t2}, i.e.: 
\begin{equation*}
\arraycolsep=0.2em
\begin{array}{lcl}
Y_{t+1} \leq  \frac{(\beta + 1 - q)\cdots \beta}{(t+\beta  - q + 1)\cdots (t+ \beta)} \cdot Y_1  \ + \  \frac{D q^{q+1}\log(t + \beta)}{\rho^{q+1}(t+\beta-q+1)\cdots(t+\beta)},
\end{array}
\end{equation*}
If we choose $\eta_t := \eta \in (0, \rho^{-1})$ for all $t\geq 1$, then $0 < \rho \eta_t \leq 1$, $C_t = (1 - \rho\eta)^t$, and $E_{i,t} = (1 - \rho \eta)^{t-i}\eta^{q+1}$.
Hence, we get $\sum_{i=1}^tE_{i,t} = \eta^{q+1}\sum_{i=1}^t(1 - \rho \eta)^{t-i} = \frac{\eta^{q}[ 1 - (1 - \rho \eta)^t]}{\rho}$.
Substituting these $C_t$ and $E_{i,t}$ into \eqref{eq:lm1_estimate21}, we obtain $Y_{t+1} \leq (1 - \rho \eta)^{t} Y_1 + \frac{D\eta^{q}[ 1 - (1 - \rho \eta)^{t}]}{\rho}$, which proves the first inequality of \eqref{eq_rate_t3}.
Now, since $1 - \rho\eta \leq \exp(-\rho\eta)$ for $0 \leq \rho\eta \leq 1$ and $1 - (1 - \rho \eta)^{t} \leq 1$, we can easily prove the second inequality of \eqref{eq_rate_t3}.
\end{proof}

Next, we prove the following elementary results, which will be used to prove Lemma~\ref{lem_general_framework_02}. 

\begin{lem}\label{lem_sup_gen_framework_02}
The following statements hold:
\begin{compactitem}
\item[$\mathrm{(a)}$]
For any $0 \leq \nu \leq \frac{1}{2}$ and $s > 0$, we have $(s + 1)^{\nu} - s^{\nu} \leq \frac{1}{2s^{1-\nu}}$.

\item[$\mathrm{(b)}$] 
For any $c  > 0$, $\theta > 0$, $\beta > 0$, and $1 + \theta - \beta > ce^{\frac{1-c}{c}}$, the function $f(t) := \frac{\log(t + 1 + \theta)}{(t + \beta)^c}$ is monotonically decreasing on $[0, +\infty)$. 

\item[$\mathrm{(c)}$] 
Suppose that $f$ is a real-valued and monotonically decreasing function on $[a, +\infty)$ such that $f(x) \geq 0$ for all $x \in [a, +\infty)$. 
Then, for any integers $t$ and $t_0$ such that $t \geq t_0 \geq a$, we have
\begin{equation}\label{integral_03}
    \sum_{i = t_0 + 1}^{t} f(i) \leq \int_{t_0}^{t} f(x) dx \leq \sum_{i=t_0}^{t-1} f(i). 
\end{equation}
\end{compactitem}
\end{lem}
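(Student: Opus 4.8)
The plan is to handle the three parts separately, since they are essentially independent elementary calculus facts. For part $\mathrm{(a)}$, I would set $g(s) := (s+1)^{\nu} - s^{\nu}$ and note that for $0 \leq \nu \leq \tfrac12$ the function $x \mapsto x^\nu$ is concave on $(0,\infty)$, so by the mean value theorem $g(s) = \nu\,\xi^{\nu-1}$ for some $\xi \in (s, s+1)$; since $\nu - 1 < 0$ we have $\xi^{\nu-1} \leq s^{\nu-1}$, hence $g(s) \leq \nu s^{\nu-1} \leq \tfrac12 s^{\nu-1} = \tfrac{1}{2 s^{1-\nu}}$, using $\nu \leq \tfrac12$. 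This is the quick one.

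For part $\mathrm{(b)}$, I would compute the derivative of $f(t) = \log(t+1+\theta)(t+\beta)^{-c}$. Differentiating,
\[
f'(t) = (t+\beta)^{-c-1}\!\left[ \frac{t+\beta}{t+1+\theta} - c\log(t+1+\theta) \right],
\]
so it suffices to show the bracket is negative on $[0,\infty)$. Since $t+\beta \leq t+1+\theta$ exactly when $1 + \theta - \beta \geq -(t) \cdot 0 \dots$ — more carefully, I would argue $\tfrac{t+\beta}{t+1+\theta} \le 1$ fails in general, so instead bound $\tfrac{t+\beta}{t+1+\theta} \le \max\{1, \tfrac{\beta}{1+\theta}\}$ or simply reduce to showing $h(t) := c\log(t+1+\theta) - \tfrac{t+\beta}{t+1+\theta} > 0$. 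One checks $h$ is increasing (its derivative is $\tfrac{c}{t+1+\theta} - \tfrac{1+\theta-\beta}{(t+1+\theta)^2} = \tfrac{c(t+1+\theta) - (1+\theta-\beta)}{(t+1+\theta)^2}$, which is positive once $t+1+\theta > \tfrac{1+\theta-\beta}{c}$, and this holds for all $t \geq 0$ because the hypothesis $1+\theta-\beta > c e^{(1-c)/c}$ forces $1+\theta > \tfrac{1+\theta-\beta}{c}$ when $\dots$), so it is enough to verify $h(0) \geq 0$, i.e. $c\log(1+\theta) \geq \tfrac{\beta}{1+\theta}$. The hypothesis $1+\theta-\beta > c e^{(1-c)/c}$ is exactly engineered so that, after rearranging, $\log(1+\theta) \ge \tfrac{1-c}{c} + \tfrac{\beta}{c(1+\theta)}$ or similar; the cleanest route is probably to substitute $u = t+1+\theta$, reduce to showing $c \log u > \tfrac{u - (1+\theta-\beta)}{u} = 1 - \tfrac{1+\theta-\beta}{u}$ for $u \ge 1+\theta$, i.e. $c\log u + \tfrac{1+\theta-\beta}{u} > 1$; minimize the left side over $u > 0$ (critical point at $u = \tfrac{1+\theta-\beta}{c}$, value $c\log\tfrac{1+\theta-\beta}{c} + c = c\log(1+\theta-\beta) - c\log c + c$), and the condition $1+\theta-\beta > c e^{(1-c)/c}$ says precisely $\log(1+\theta-\beta) > \log c + \tfrac{1-c}{c}$, i.e. $c\log(1+\theta-\beta) - c\log c + c > 1$. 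That is the content. This is the step I expect to be the main obstacle — not because it is deep, but because one must chase the exact algebraic form of the threshold $c e^{(1-c)/c}$ and confirm that the global minimum of the auxiliary function (rather than just the boundary value) is what the hypothesis controls.

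For part $\mathrm{(c)}$, this is the standard integral comparison for a nonnegative decreasing function: on each interval $[i, i+1]$ one has $f(i+1) \le f(x) \le f(i)$; integrating over $[i,i+1]$ gives $f(i+1) \le \int_i^{i+1} f(x)\,dx \le f(i)$, and summing from $i = t_0$ to $i = t-1$ yields $\sum_{i=t_0+1}^{t} f(i) \le \int_{t_0}^{t} f(x)\,dx \le \sum_{i=t_0}^{t-1} f(i)$, which is exactly \eqref{integral_03}. The only thing to note is that this requires $[t_0, t] \subseteq [a, \infty)$, which is guaranteed by $t \ge t_0 \ge a$. I would present parts $\mathrm{(a)}$ and $\mathrm{(c)}$ in a couple of lines each and devote the bulk of the write-up to carefully executing the monotonicity argument in $\mathrm{(b)}$.
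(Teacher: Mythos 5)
Your proposal is correct and in substance matches the paper's proof: part (c) is the identical integral-comparison argument; part (b), in the "cleanest route" you settle on, is exactly the paper's computation (the paper maximizes $g(\tau) = 1 - \tfrac{1+\theta-\beta}{\tau} - c\log\tau$ at $\tau^{*} = \tfrac{1+\theta-\beta}{c}$, which is the same as your minimization of $c\log u + \tfrac{1+\theta-\beta}{u}$, and the hypothesis $1+\theta-\beta > ce^{(1-c)/c}$ is precisely what makes the extremal value have the right sign); and part (a) differs only in that you invoke the mean value theorem where the paper multiplies by the conjugate factor $(s+1)^{1-\nu}+s^{1-\nu}$, both of which work. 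Do make sure the final write-up of (b) uses the global-extremum argument rather than the earlier ``show $h$ is increasing and check $h(0)$'' route you float mid-paragraph: the critical point $u^{*} = \tfrac{1+\theta-\beta}{c}$ can lie strictly inside $[1+\theta,+\infty)$ (e.g.\ $c=\tfrac12$, $\theta=10$, $\beta=0.1$ gives $u^{*}=21.8>1+\theta=11$), so the boundary value alone does not suffice.
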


\begin{proof}
(a)~If $2\nu \leq 1$, then $\left(\frac{s+1}{s}\right)^{1-2\nu} \geq 1$, which is equivalent to $\frac{s+1}{s} \geq \left(\frac{s+1}{s}\right)^{2\nu}$.
This leads to $(s+1)^{\nu}s^{1-\nu} - s^{\nu}(s+1)^{1-\nu} \leq 0$.
Hence, we have
\begin{equation*}
(s + 1)^{\nu} - s^{\nu}  = \frac{1 + (s+1)^{\nu}s^{1-\nu} - s^{\nu}(s+1)^{1-\nu}}{(s + 1)^{1-\nu} + s^{1-\nu}} \leq \frac{1}{(s + 1)^{1-\nu} + s^{1-\nu}} \leq \frac{1}{2s^{1-\nu}},
\end{equation*}
which proves assertion (a).

\indent{(b)}~Our goal is to show that $f'(t) < 0$ for all $t \geq 0$. 
We can directly compute $f'(t)$ as
\begin{equation*}
    f'(t) = (t + \beta)^{-c-1} \left[  1 - \frac{1 + \theta - \beta}{t + 1 + \theta} - c \cdot \log(t + 1 + \theta)  \right] = (t + \beta)^{-c-1} g(t + 1 + \theta), 
\end{equation*}
where $g(\tau) :=  1 - \frac{1 + \theta - \beta}{\tau} - c \log(\tau)$.
We consider $g(\tau)$ for $\tau > 0$.
It is obvious to show that $g'(\tau) = \frac{1 + \theta - \beta}{\tau^2} - \frac{c}{\tau} = \frac{(1 + \theta - \beta) - c\tau}{ \tau^2}$ and $g''(\tau) = \frac{c\tau - 2(1 + \theta - \beta)}{\tau^3}$.
Hence, $g'(\tau) = 0$ has a unique solution $\tau^{*} := \frac{1+\theta - \beta}{c} > 0$ and $g''(\tau^{*}) = -\frac{c^3}{(1 + \theta - \beta)^2} < 0$.
Consequently, $g$ attains its unique local maximum at $\tau^{*}$. 
Moreover, for $\tau \geq \tau^{*}$, we have $g'(\tau) \leq 0$. Hence, $g$ is nonincreasing on $[\tau_{*}, +\infty)$, which leads to
\begin{equation*}
g(\tau) \leq g(\tau^{*}) = 1 - c - c \log\left(\frac{1 + \theta - \beta}{c}\right) < 0.
\end{equation*}
Here, the last inequality holds since $1 + \theta - \beta > ce^{\frac{1-c}{c}}$.
Since $f'(t) = (t + \beta)^{-c-1}g(t + 1 + \theta)$, where $(t + \beta)^{-c-1} > 0$ for any $t\geq 0$ and $c$, we have $f'(t) < 0$ for all $t \geq 0$.
Hence, $f$ is monotonically decreasing on $[0, +\infty)$.

\indent{(c)}~If $f$ is monotonically decreasing and nonnegative on $[a, +\infty)$, then $f(i+1) \leq \int_{i}^{i+1} f(x) dx \leq f(i)$ for any integer $i \geq a$.
Hence, summing this inequality from $i := t_0$ to $t-1$, we have
\begin{align*}
    \sum_{i = t_0 + 1}^{t}f(i) = \sum_{i = t_0}^{t-1} f(i+1) \leq \sum_{i = t_0}^{t-1}\int_{i}^{i+1}f(x)dx = \int_{t_0}^{t} f(x) dx \leq \sum_{i=t_0}^{t-1}f(i),
\end{align*}
which proves \eqref{integral_03}.
\end{proof}

\begin{proof}[\mytxtbi{The proof of Lemma~\ref{lem_general_framework_02}}]
From the inequality \eqref{eq_lem_general_framework_02} and $\eta_t := \frac{\gamma}{(t+\beta)^{\alpha}}$, we have
\begin{align*}
    Z_t  \leq \frac{1}{\rho \eta_t^m} (Y_t - Y_{t+1} ) + \frac{D \eta_t^{q-m}}{\rho}  = \frac{(t+\beta)^{\alpha m}}{\rho \gamma^m} (Y_t - Y_{t+1} ) + \frac{D \gamma^{q-m}}{\rho} \cdot \frac{1}{(t + \beta)^{\alpha(q-m)}}. 
\end{align*}
Next, using Lemma~\ref{lem_sup_gen_framework_02}(a) with $s := t + \beta$ and $\nu := m\alpha$ we have
\begin{equation}\label{eq:proof1t}
(t+\beta + 1)^{\alpha m} - (t + \beta)^{\alpha m} \leq \frac{1}{2(t+\beta)^{1 - \alpha m}},
\end{equation}
because we assume that $m\alpha \leq \frac{1}{2}$. 
Summing up the first inequality from $t = 1,\cdots, T$ and taking average, we have
\begin{equation*}
\arraycolsep=0.1em
\begin{array}{lcl}
    \frac{1}{T} \sum_{t = 1}^{T} Z_{t} & \leq & 
     \frac{1}{\rho \gamma^m} \cdot \frac{1}{T} \sum_{t=1}^{T} (t+\beta)^{\alpha m} (Y_t - Y_{t+1} ) + \frac{D \gamma^{q-m}}{\rho} \cdot \frac{1}{T} \sum_{t=1}^{T} \frac{1}{(t + \beta)^{\alpha(q-m)}} \vspace{1ex}\\
    & = &  \frac{1}{\rho \gamma^m} \cdot \frac{1}{T} \left [  (1+\beta)^{\alpha m} Y_1 - (T + \beta)^{\alpha m} Y_{T+1}  \right] 
     + \frac{D \gamma^{q-m}}{\rho} \cdot \frac{1}{T} \sum_{t=1}^{T} \frac{1}{(t + \beta)^{\alpha(q-m)}} \vspace{1ex}\\
  && + {~}  \frac{1}{\rho \gamma^m} \cdot \frac{1}{T} \sum_{t=1}^{T-1} \left(  (t + 1 + \beta)^{\alpha m} - (t + \beta)^{\alpha m}  \right) Y_{t+1} \vspace{1ex}\\
    & \overset{\eqref{eq:proof1t}}{\leq}  & \frac{(1+\beta)^{\alpha m} Y_1}{\rho \gamma^m} \cdot \frac{1}{T} + \frac{1}{2 \rho \gamma^m} \cdot \frac{1}{T} \sum_{t=1}^{T-1} \frac{C + H \log(t + 1 + \theta)}{(t+\beta)^{1 - \alpha m}} + \frac{D \gamma^{q-m}}{\rho} \cdot \frac{1}{T} \sum_{t=1}^{T} \frac{1}{(t + \beta)^{\alpha(q-m)}} \vspace{1ex}\\
    & \overset{\eqref{integral_03}}{\leq} & \frac{(1+\beta)^{\alpha m} Y_1}{\rho \gamma^m} \cdot \frac{1}{T} + \frac{C}{2 \rho \gamma^m} \cdot \frac{1}{T} \int_{t=0}^{T-1} \frac{dt}{(t+\beta)^{1 - \alpha m}} + \frac{H}{2 \rho \gamma^m} \cdot \frac{1}{T} \int_{t=0}^{T-1} \frac{\log (t + 1 + \theta)}{(t+\beta)^{1 - \alpha m}} dt \vspace{1ex}\\
    && + {~} \frac{D \gamma^{q-m}}{\rho} \cdot \frac{1}{T} \int_{t=0}^{T} \frac{dt}{(t + \beta)^{\alpha(q-m)}},
\end{array}
\end{equation*}
where the second inequality follows since $0 \leq Y_t \leq C + H \log (t + \theta)$ for some $C > 0$, $H \geq 0$, and $\theta > 0$, for all $t \geq 1$, and $\alpha m \leq \frac{1}{2}$. 
The last inequality follows since $\frac{\log (t + 1 + \theta)}{(t+\beta)^{1 - \alpha m}}$ is nonnegative and monotonically decreasing on $[0,\infty)$ according to Lemma~\ref{lem_sup_gen_framework_02}(b) with $1 - \alpha m \geq \frac{1}{2} > 0$ and $1 + \theta - \beta > (1 - \alpha m)e^{\frac{\alpha m}{1 - \alpha m}}$, and both $\frac{1}{(t +\beta)^{1-\alpha m}}$ and $\frac{1}{(t+\beta)^{\alpha(q-m)}}$ are also nonnegative and monotonically decreasing on $[0,\infty)$. 
Note that
\begin{equation*}
	\begin{array}{lcl}
   	 \int_{t=0}^{T-1} \frac{\log (t + 1 + \theta)}{(t+\beta)^{1 - \alpha m}} dt & = & \frac{1}{\alpha m} (t + \beta)^{\alpha m} \log(t + 1 + \theta) \Big |_{t=0}^{T-1} 
	 - \frac{1}{\alpha m} \int_{t=0}^{T-1} \frac{(t+\beta)^{\alpha m}}{(t+1 + \theta)}  dt \vspace{1ex}\\
	    & \leq & \frac{1}{\alpha m} (T - 1 + \beta)^{\alpha m} \log(T + \theta). 
	\end{array}
\end{equation*}
Therefore, we consider two cases:

\noindent $\bullet$~If $\alpha(q - m) = 1$, we have
    \begin{equation*}
    	\begin{array}{lcl}
        \frac{1}{T} \sum_{t = 1}^{T} Z_{t} & \leq & \frac{(1+\beta)^{\alpha m} Y_1}{\rho \gamma^m} \cdot \frac{1}{T} \ + \ \frac{C}{2 \rho \alpha m \gamma^m} \cdot \frac{(T - 1 + \beta)^{\alpha m} - \beta^{\alpha m}}{T} \vspace{1ex}\\
       && + {~}\frac{H}{2\rho \alpha m \gamma^m} \cdot \frac{(T - 1 + \beta)^{\alpha m} \log(T + \theta)}{T}  \ + \ \frac{D \gamma^{q - m}}{\rho} \cdot \frac{\log(T+\beta) - \log(\beta)}{T}  \vspace{1ex}\\
        & \leq &\frac{(1+\beta)^{\alpha m} Y_1}{\rho \gamma^m} \cdot \frac{1}{T} + \frac{C}{2 \rho \alpha m \gamma^m} \cdot \frac{(T - 1 + \beta)^{\alpha m}}{T}  \vspace{1ex}\\
       && + {~} \frac{H}{2\rho \alpha m \gamma^m} \cdot \frac{(T - 1 + \beta)^{\alpha m} \log(T + \theta)}{T} \ + \ \frac{D \gamma^{q - m}}{\rho} \cdot \frac{\log(T+\beta) - \log(\beta)}{T}. 
    \end{array}
    \end{equation*}
\noindent $\bullet$~If $\alpha(q - m) \neq 1$, we have
    \begin{equation*}
    \begin{array}{lcl}
        \frac{1}{T} \sum_{t = 1}^{T} Z_{t} & \leq &  \frac{(1+\beta)^{\alpha m} Y_1}{\rho \gamma^m} \cdot \frac{1}{T} \ + \ \frac{C}{2 \rho \alpha m \gamma^m} \cdot \frac{(T - 1 + \beta)^{\alpha m} -  \beta^{\alpha m}}{T}   + \frac{H}{2\rho \alpha m \gamma^m} \cdot \frac{(T - 1 + \beta)^{\alpha m} \log(T + \theta)}{T}  \vspace{1ex}\\
        && + {~} \frac{D \gamma^{q-m}}{\rho (1 - \alpha ( q - m))} \cdot \frac{(T + \beta)^{1 - \alpha(q - m)} - \beta^{1 - \alpha(q - m)}}{T} \vspace{1ex}\\
        & \leq  & \frac{(1+\beta)^{\alpha m} Y_1}{\rho \gamma^m} \cdot \frac{1}{T} \ + \ \frac{C}{2 \rho \alpha m \gamma^m} \cdot \frac{(T - 1 + \beta)^{\alpha m}}{T}  \ + \ \frac{H}{2\rho \alpha m \gamma^m} \cdot \frac{(T - 1 + \beta)^{\alpha m} \log(T + \theta)}{T}  \vspace{1ex}\\
        && + {~} \frac{D \gamma^{q-m}}{\rho (1 - \alpha ( q - m))} \cdot \frac{(T + \beta)^{1 - \alpha(q - m)}}{T}. 
    \end{array}
 \end{equation*}
Here, the result is obtained by directly computing the integrals.
Hence, \eqref{eq:key_concl} is proved.
\end{proof}

\subsection{Key estimates }\label{subsec:key_lemmas}
This appendix provides four technical lemmas for the next steps of our analysis.
However, let us first state the following facts.
\begin{compactitem}
\item Let $w_{*}$ be a stationary point of $F$, i.e. $F(w_{*}) = 0$. Then, if $F$ is convex, then $w_{*} \in \arg \min_{w \in \mathbb{R}^d} F(w)$.
Consequently, for any permutation $\pi^{(t)}$ of $[n]$, we have
\begin{equation}\label{eq:fact1}
\sum_{j=0}^{n-1}\nabla{f}(w_{*}; \pi^{(t)}(j+1)) = 0.
\end{equation}
\item For any $1 \leq i \leq n$, from the update of $w_i^{(t)}$ in Algorithm~\ref{sgd_replacement}, we have
\begin{equation}\label{eq:fact2}
\arraycolsep=0.2em
\begin{array}{lcl}
w_i^{(t)} & = &  \tilde{w}_{t-1} - \frac{\eta_t}{n}\sum_{j=0}^{i-1} \nabla{f}(w_j^{(t)}; \pi^{(t)}(j+1)), \vspace{1ex}\\
\tilde{w}_t & = &  \tilde{w}_{t-1} - \frac{\eta_t}{n} \sum_{j=0}^{n-1} \nabla{f}(w_j^{(t)}; \pi^{(t)}(j+1)).
\end{array}
\end{equation}
\item Let us recalled $\sigma_{*}^2 := \frac{1}{n}\sum_{i=0}^n\norms{\nabla{f}(w_{*}; i)}^2$, the variance of $F$, defined by \eqref{defn_finite}.
\end{compactitem}

Now, we first upper bound $\frac{1}{n} \sum_{i=0}^{n-1} \norms{ w_{i}^{(t)} - w_{0}^{(t)} }^2$ in the following lemma.
This lemma only requires Assumption \ref{ass_basic}(ii) to hold without convexity.

\begin{lem}\label{lem_bound_weights}
Suppose that Assumption \ref{ass_basic}$\mathrm{(ii)}$ holds for \eqref{ERM_problem_01}. 
Let $\sets{w_i^{(t)}}$ be generated by  Algorithm~\ref{sgd_replacement} with the learning rate $\eta_i^{(t)} := \frac{\eta_t}{n} > 0$ for a given positive sequence $\sets{\eta_t}$. 
Then
\begin{equation}\label{eq_lem_bound_02}
\arraycolsep=0.2em
\begin{array}{lcl}
\norms{ w_{i}^{(t)} - w_{0}^{(t)} }^2 &\leq  & \frac{2 L^2 \eta_t^2 \cdot i }{n^2} \sum_{j=0}^{i-1} \norms{w_{j}^{(t)} - w_{*} }^2 + \frac{ 2 \eta_t^2 (n-i)}{n} \cdot \sigma_{*}^2.  \vspace{1ex}\\
 \norms{ w_{i}^{(t)} - w_{*} }^2  &\leq &  2  \norms{ w_{0}^{(t)} - w_{*} }^2  +  \frac{4 L^2 \eta_t^2 \cdot i }{n^2} \cdot \sum_{j=0}^{i-1}  \norms{ w_{j}^{(t)} - w_{*} }^2 + \frac{4\eta_t^2 (n-i) \sigma_{*}^2}{n}.
 \end{array}
\end{equation}
If, in addition, $0 < \eta_t \leq \frac{1}{2L}$, then, for any $1 \leq i \leq n$, we have
\begin{equation}\label{eq_lem_bound_02b}
\arraycolsep=0.2em
\begin{array}{lcl}
\sum_{j=0}^{i-1}\norms{w_j^{(t)} - w_{*}}^2 \leq 4 i \cdot \big[ \norms{w_0^{(t)} - w_{*}}^2 + 2 \eta_t^2\sigma_{*}^2 \big].
\end{array}
\end{equation}
Consequently, if $0 < \eta_t \leq \frac{1}{2L}$ for all $t \geq 1$, then we have 
\begin{equation}\label{eq_thm_weight_01}
    \frac{1}{n} \sum_{i=0}^{n-1} \norms{ w_{i}^{(t)} - w_{0}^{(t)} }^2  \leq \eta_t^2 \cdot \frac{8 L^2}{3} \norms{ w_{0}^{(t)} - w_{*} }^2 \ + \ \frac{16 L^2 \sigma_{*}^2}{3} \cdot \eta_t^4  \ + \ 2\sigma_{*}^2 \cdot \eta_t^2. 
\end{equation}
\end{lem}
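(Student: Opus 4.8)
The plan is to proceed in four stages, each building on the previous one, all relying only on $L$-smoothness (Assumption~\ref{ass_basic}(ii)) and the explicit recursion \eqref{eq:fact2}. First I would establish the two estimates in \eqref{eq_lem_bound_02}. Starting from \eqref{eq:fact2}, I write $w_i^{(t)} - w_0^{(t)} = -\frac{\eta_t}{n}\sum_{j=0}^{i-1}\nabla f(w_j^{(t)};\pi^{(t)}(j+1))$, split each term as $\nabla f(w_j^{(t)};\pi^{(t)}(j+1)) = [\nabla f(w_j^{(t)};\pi^{(t)}(j+1)) - \nabla f(w_{*};\pi^{(t)}(j+1))] + \nabla f(w_{*};\pi^{(t)}(j+1))$, and apply the inequality $\norms{a+b}^2 \le 2\norms{a}^2 + 2\norms{b}^2$ together with Cauchy--Schwarz on the sum of $i$ terms. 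The smoothness bound \eqref{eq:Lsmooth_basic} controls the first group by $L^2\sum_{j=0}^{i-1}\norms{w_j^{(t)}-w_{*}}^2$, and for the second group I would use the crucial observation that since $\pi^{(t)}$ is a permutation, the indices $\{\pi^{(t)}(j+1): 0\le j\le i-1\}$ are $i$ distinct elements of $[n]$, so $\sum_{j=0}^{i-1}\norms{\nabla f(w_{*};\pi^{(t)}(j+1))}^2 \le \sum_{\ell=1}^{n}\norms{\nabla f(w_{*};\ell)}^2 = n\sigma_{*}^2$ — actually I want the sharper bound using at most $(n-i)$ ``missing'' terms, which comes from bounding the partial sum of squared gradients; combined with \eqref{eq:fact1} one gets the factor $(n-i)$ rather than $n$. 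This yields the first line of \eqref{eq_lem_bound_02}; the second line follows by writing $\norms{w_i^{(t)}-w_{*}}^2 \le 2\norms{w_i^{(t)}-w_0^{(t)}}^2 + 2\norms{w_0^{(t)}-w_{*}}^2$ and substituting.

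Next I would derive the summed bound \eqref{eq_lem_bound_02b}. Summing the second inequality of \eqref{eq_lem_bound_02} over $i = 0,\dots,k-1$ for a generic $k \le n$, the left side is $S_k := \sum_{i=0}^{k-1}\norms{w_i^{(t)}-w_{*}}^2$ and the right side contains a term $\frac{4L^2\eta_t^2}{n^2}\sum_{i=0}^{k-1} i\sum_{j=0}^{i-1}\norms{w_j^{(t)}-w_{*}}^2 \le \frac{4L^2\eta_t^2 k}{n^2}\cdot k\cdot S_k \le 4L^2\eta_t^2 S_k$ (using $k\le n$). Under $\eta_t \le \frac{1}{2L}$ this coefficient is $\le 1$; more precisely, one needs the coefficient strictly less than $1$ to absorb $S_k$ on the left, and $4L^2\eta_t^2 \le 1$ gives exactly the borderline case, so I would be slightly more careful — either use $\eta_t\le\frac1{2L}$ to get coefficient $\le 1/4 \cdot 4 = 1$... here the bookkeeping matters: with $\eta_t \le \frac{1}{2L}$ we have $4L^2\eta_t^2\le 1$, and tracking constants carefully (the ``$i$'' and ``$n-i$'' give room to spare) yields $S_k \le 4k[\norms{w_0^{(t)}-w_{*}}^2 + 2\eta_t^2\sigma_{*}^2]$. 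This is the step I expect to be the main obstacle: getting the constants to close requires being a little clever about which partial sums to bound by $k$ versus $n$, and one must avoid a circular $S_k \le S_k$.

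Finally, for \eqref{eq_thm_weight_01}, I would substitute \eqref{eq_lem_bound_02b} back into the first inequality of \eqref{eq_lem_bound_02}: this bounds $\sum_{j=0}^{i-1}\norms{w_j^{(t)}-w_{*}}^2$ by $4i[\norms{w_0^{(t)}-w_{*}}^2 + 2\eta_t^2\sigma_{*}^2]$, so $\norms{w_i^{(t)}-w_0^{(t)}}^2 \le \frac{8L^2\eta_t^2 i^2}{n^2}[\norms{w_0^{(t)}-w_{*}}^2 + 2\eta_t^2\sigma_{*}^2] + \frac{2\eta_t^2(n-i)}{n}\sigma_{*}^2$. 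Then I average over $i = 0,\dots,n-1$, using $\frac{1}{n}\sum_{i=0}^{n-1} i^2 \le \frac{n^2}{3}$ (so $\frac{1}{n}\sum_i \frac{i^2}{n^2} \le \frac13$) and $\frac{1}{n}\sum_{i=0}^{n-1}(n-i) \le n$, which yields $\frac{8L^2\eta_t^2}{3}\norms{w_0^{(t)}-w_{*}}^2 + \frac{16L^2\sigma_{*}^2}{3}\eta_t^4 + 2\sigma_{*}^2\eta_t^2$, exactly \eqref{eq_thm_weight_01}. The only care needed here is verifying the elementary sum bounds and that all the constant factors line up; no deeper idea is required beyond what was used in the earlier stages.
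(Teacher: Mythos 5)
Your proposal follows essentially the same route as the paper's proof: the same splitting of $\nabla f(w_j^{(t)};\cdot)$ around $\nabla f(w_{*};\cdot)$ combined with $\sum_{j=0}^{n-1}\nabla f(w_{*};\pi^{(t)}(j+1))=0$ to produce the $(n-i)$ factor, the same sum-and-absorb argument for \eqref{eq_lem_bound_02b}, and the same averaging with $\sum_{i=0}^{n-1} i^2 \le n^3/3$ for \eqref{eq_thm_weight_01}. The borderline constant you flag in the absorption step closes exactly as you suspect: bounding the triangular double sum via $\sum_{i=0}^{k-1} i = \frac{k(k-1)}{2} \le \frac{k^2}{2}$ (rather than $k^2$) halves the coefficient to $2L^2\eta_t^2 \le \frac{1}{2}$, which is precisely what the paper does before dividing through by $1-2L^2\eta_t^2 \ge \frac{1}{2}$.
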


\begin{proof}
Using the first line of \eqref{eq:fact2}, the optimality condition $\nabla{F}(w_{*}) = 0$ in (\textit{a}), and $(u+v)^2 \leq 2u^2 + 2v^2$ and the Cauchy-Schwarz inequality in (\textit{b}), for $i \in [n]$, we can derive
\begin{equation}\label{eq:scvx_est1a}
\hspace{-0ex}
\arraycolsep=0.1em
\begin{array}{lcl}
     \norms{ w_{i}^{(t)} - w_{0}^{(t)} }^2 & = &  \frac{\eta_t^2}{n^2} \big\Vert \sum_{j=0}^{i-1}  \nabla f ( w_{j}^{(t)} ; \pi^{(t)} (j + 1))  \big\Vert^2  \vspace{1.5ex}\\
    &\overset{\tiny(a)}{ = } &     \frac{\eta_t^2}{n^2}  \big\Vert \sum_{j=0}^{i-1} \big( \nabla f ( w_{j}^{(t)} ; \pi^{(t)} (j + 1)) -  \nabla f ( w_{*} ; \pi^{(t)} (j + 1)) \big) \vspace{1.5ex}\\
    && - {~} \sum_{j=i}^{n-1} \nabla f ( w_{*} ; \pi^{(t)} (j + 1))  \big\Vert^2 \vspace{1ex}\\
    & \overset{\tiny(b)}{\leq} &  \frac{2\eta_t^2 \cdot i}{n^2} \sum_{j=0}^{i-1}   \big\Vert  \nabla f ( w_{j}^{(t)} ; \pi^{(t)} (j + 1))  -  \nabla f ( w_{*} ; \pi^{(t)} (j + 1))   \big\Vert^2  \vspace{1.5ex}\\
    & & + {~}   \frac{2\eta_t^2 \cdot (n-i)}{n^2} \sum_{j=i}^{n-1} \big\Vert  \nabla f ( w_{*} ; \pi^{(t)} (j + 1)) \big\Vert^2.
\end{array}
\hspace{-4ex}
\end{equation}
Using \eqref{eq:Lsmooth_basic} and \eqref{defn_finite}, we can further estimate \eqref{eq:scvx_est1a} as
\begin{equation*}
\arraycolsep=0.1em
\begin{array}{lcl}
     \norms{ w_{i}^{(t)} - w_{0}^{(t)} }^2  & \overset{\tiny\eqref{eq:Lsmooth_basic}}{\leq} & \frac{2L^2\eta_t^2 \cdot i}{n^2}    \sum_{j=0}^{i-1} \norms{ w_{j}^{(t)} - w_{*} }^2 + \frac{2(n-i)\cdot \eta_t^2 }{n} \cdot \frac{1}{n} \sum_{j=0}^{n-1} \Vert \nabla f ( w_{*} ; \pi^{(t)} (j + 1)) \Vert^2 \vspace{1.5ex}\\
    & \overset{\tiny\eqref{defn_finite}}{\leq} & \frac{2 L^2 \eta_t^2 \cdot i }{n^2} \sum_{j=0}^{i-1} \norms{ w_{j}^{(t)} - w_{*} }^2 + \frac{2(n-i) \cdot \eta_t^2 }{n} \cdot \sigma_{*}^2.
\end{array}    
\end{equation*}    
This is exactly the first inequality of \eqref{eq_lem_bound_02}.

Next, by $\norms{ u + v }^2 \leq 2 \norms{ u }^2 + 2 \norm{ v }^2$ for any $u$ and $v$, for $i \in [n]$, using the last inequality we can easly show that
\begin{equation*}
\arraycolsep=0.2em
\begin{array}{lcl}
   \norms{w_{i}^{(t)} - w_{*}}^2 & \leq & 2 \norms{ w_{0}^{(t)} - w_{*} }^2 + 2 \norms{ w_{i}^{(t)} - w_{0}^{(t)} }^2  \vspace{1.5ex}\\
    & \leq &  2 \norms{ w_{0}^{(t)} - w_{*} }^2 + \eta_t^2 \cdot \frac{4 i L^2}{n^2} \sum_{j=0}^{i-1} \norms{w_{j}^{(t)} - w_{*} }^2 + \frac{4 \eta_t^2 (n-i)}{n} \cdot \sigma_{*}^2,
\end{array}
\end{equation*}
which proves the second estimate of \eqref{eq_lem_bound_02}.

Now, summing up the second estimate of \eqref{eq_lem_bound_02} from $j=0$ to $j=i-1$, we obtain 
\begin{equation*}
\arraycolsep=0.1em
\begin{array}{lcl}
\sum_{j=0}^{i-1}\norms{w_j^{(t)} - w_{*}}^2 &\leq & 2i \cdot \norms{ w_{0}^{(t)} - w_{*} }^2 + \frac{4 \eta_t^2 \sigma_{*}^2}{n} \sum_{j=0}^{i-1}(n - j)  + \frac{4L^2\eta_t^2}{n^2}\sum_{j=0}^{i-1} j \sum_{k=0}^{j-1}\norms{w_k^{(t)} - w_{*}}^2 \vspace{1ex}\\
& \leq & 2i \cdot \norms{ w_{0}^{(t)} - w_{*} }^2 + 4 \eta_t^2 \sigma_{*}^2 \cdot i + \frac{2L^2\eta_t^2 \cdot i(i-1)}{n^2}\sum_{j=0}^{i-1}\norms{w_j^{(t)} - w_{*}}^2 \vspace{1ex}\\
& \leq & 2i \big[  \norms{ w_{0}^{(t)} - w_{*} }^2 +   2\eta_t^2 \sigma_{*}^2 \big] + 2L^2\eta_t^2 \sum_{j=0}^{i-1}\norms{w_j^{(t)} - w_{*}}^2.
\end{array}
\end{equation*}
Here, we obtain the second inequality by first rearranging the double sum and then upper bound each term.
Since $0 < \eta_t \leq \frac{1}{2L}$, we have $1 - 2L^2\eta_t^2 \geq \frac{1}{2}$.
Rearranging the last inequality and using the last fact, we obtain \eqref{eq_lem_bound_02b}.

Finally, combining the first inequality of \eqref{eq_lem_bound_02} and \eqref{eq_lem_bound_02b}, we can derive that
\begin{equation*}
\arraycolsep=0.2em
\begin{array}{lcl}
\sum_{i=0}^{n-1}\norms{ w_{i}^{(t)} - w_{0}^{(t)} }^2 & \overset{\tiny\eqref{eq_lem_bound_02}}{\leq} &  \frac{2 L^2 \eta_t^2}{n^2}  \sum_{i=0}^{n-1} i \cdot \sum_{j=0}^{i-1} \norms{w_j^{(t)} - w_{*}}^2   +  \frac{2 \sigma_{*}^2 \eta_t^2}{n} \sum_{i=0}^{n-1}(n-i) \vspace{1ex}\\
    & \overset{\eqref{eq_lem_bound_02b}}{\leq} & \frac{2 L^2 \eta_t^2 }{n^2} \sum_{i=0}^{n-1} 4 i^2 \big[ \norms{w_0^{(t)} - w_{*}}^2 + 2 \eta_t^2\sigma_{*}^2 \big] +  \sigma_{*}^2 \eta_t^2(n+1) \vspace{1ex}\\
    &\leq & \frac{8 L^2 \eta_t^2 \cdot n}{3}  \norms{ w_{0}^{(t)} - w_{*} }^2 + \frac{16 L^2 \sigma_{*}^2 \cdot n }{3} \cdot \eta_t^4  + 2 \sigma_{*}^2 \eta_t^2 \cdot n,
\end{array}
\end{equation*}
which implies \eqref{eq_thm_weight_01} after multiplying both sides by $\frac{1}{n}$.
\end{proof}

\begin{lem}\label{le:ncvx_keybound1}
Suppose that Assumption \ref{ass_basic}$\mathrm{(ii)}$ and Assumption~\ref{ass_general_bounded_variance} hold for \eqref{ERM_problem_01}. 
Let $\sets{w_i^{(t)}}$ be generated by  Algorithm~\ref{sgd_replacement} with any shuffling strategy $\pi^{(t)}$ and a learning rate $\eta_i^{(t)} := \frac{\eta_t}{n} > 0$ for a given positive sequence $\sets{\eta_t}$ such that $ 0 < \eta_t \leq \frac{1}{L\sqrt{3}}$. 
Then, we have
\begin{equation}\label{eq:ncvx_keybound1}
\sum_{j=0}^{n-1} \Vert w_j^{(t)} - w_0^{(t)} \Vert^2 \leq  n \eta_t^2 \cdot \left[ \left( 3 \Theta + 2 \right) \Vert   \nabla F ( w_0^{(t)}) \Vert^2 + 3 \sigma^2 \right].
\end{equation}
If $\pi^{(t)}$ is uniformly sampled at random without replacement from $[n]$ and the learning rate $\eta_t$ satisfies $0 < \eta_t \leq \frac{1}{L\sqrt{3}}$ for all $t \geq 1$, then we have
\begin{equation}\label{eq:ncvx_keybound1_reshuffling}
\mathbb{E}\Big[ \sum_{j=0}^{n-1} \Vert w_j^{(t)} - w_0^{(t)} \Vert^2 \Big]  \leq 2 \eta_t^2 \cdot \left[ \left(\Theta + n \right)\mathbb{E} \big[ \Vert   \nabla F ( w_0^{(t)}) \Vert^2 \big] +  \sigma^2 \right]. 
\end{equation}
\end{lem}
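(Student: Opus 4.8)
The plan is to adapt the argument behind Lemma~\ref{lem_bound_weights}, but anchoring the inner iterates at the current epoch point $w_0^{(t)} = \tilde{w}_{t-1}$ instead of at the minimizer $w_{*}$, and then converting the resulting ``local variance'' into a bound involving $\norms{\nabla F(w_0^{(t)})}^2$ by invoking Assumption~\ref{ass_general_bounded_variance}. Fix an epoch $t$. Starting from the first identity in \eqref{eq:fact2}, $w_i^{(t)} - w_0^{(t)} = -\frac{\eta_t}{n}\sum_{j=0}^{i-1}\nabla f(w_j^{(t)};\pi^{(t)}(j+1))$, I would split each summand as $\nabla f(w_j^{(t)};\pi^{(t)}(j+1)) = a_j^{(t)} + b_j^{(t)} + \nabla F(w_0^{(t)})$, where $a_j^{(t)} := \nabla f(w_j^{(t)};\pi^{(t)}(j+1)) - \nabla f(w_0^{(t)};\pi^{(t)}(j+1))$ is the drift of the inner iterate and $b_j^{(t)} := \nabla f(w_0^{(t)};\pi^{(t)}(j+1)) - \nabla F(w_0^{(t)})$ is the sampling noise at the fixed point $w_0^{(t)}$. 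Combining $\norms{x+y+z}^2 \le 3\norms{x}^2+3\norms{y}^2+3\norms{z}^2$, Cauchy--Schwarz on each partial sum, and $\norms{a_j^{(t)}} \le L\norms{w_j^{(t)}-w_0^{(t)}}$ from \eqref{eq:Lsmooth_basic} gives the per-index estimate
\begin{equation*}
\norms{w_i^{(t)}-w_0^{(t)}}^2 \;\le\; \frac{3\eta_t^2}{n^2}\Big[\, i L^2 \sum_{j=0}^{i-1}\norms{w_j^{(t)}-w_0^{(t)}}^2 \;+\; \Big\Vert\sum_{j=0}^{i-1} b_j^{(t)}\Big\Vert^2 \;+\; i^2\norms{\nabla F(w_0^{(t)})}^2 \Big].
\end{equation*}

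For \eqref{eq:ncvx_keybound1} (a generic permutation) I would bound the noise term crudely: since $\pi^{(t)}$ is a permutation, $\sum_{j=0}^{i-1}\norms{b_j^{(t)}}^2 \le \sum_{k=1}^{n}\norms{\nabla f(w_0^{(t)};k) - \nabla F(w_0^{(t)})}^2 \le n(\Theta\norms{\nabla F(w_0^{(t)})}^2+\sigma^2)$ by Assumption~\ref{ass_general_bounded_variance}, hence $\Vert\sum_{j=0}^{i-1} b_j^{(t)}\Vert^2 \le i\,n\,(\Theta\norms{\nabla F(w_0^{(t)})}^2+\sigma^2)$ by Cauchy--Schwarz. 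Summing the per-index estimate over $i=0,\dots,n-1$, using $\sum_{i=0}^{n-1} i\sum_{j=0}^{i-1} x_j \le \frac{n^2}{2}\sum_{j=0}^{n-1} x_j$ and $\sum_{i=0}^{n-1} i^2 \le \frac{n^3}{3}$, and writing $S_t := \sum_{j=0}^{n-1}\norms{w_j^{(t)}-w_0^{(t)}}^2$, produces the self-bounding inequality $S_t \le \frac{3L^2\eta_t^2}{2} S_t + \frac{3n\eta_t^2}{2}(\Theta\norms{\nabla F(w_0^{(t)})}^2+\sigma^2) + n\eta_t^2\norms{\nabla F(w_0^{(t)})}^2$. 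Since $\eta_t \le \frac{1}{L\sqrt{3}}$ forces $\frac{3L^2\eta_t^2}{2}\le\frac12$, moving that term to the left and collecting coefficients gives exactly \eqref{eq:ncvx_keybound1}.

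For \eqref{eq:ncvx_keybound1_reshuffling} I would keep the same decomposition and per-index estimate, changing only the bound on $\Vert\sum_{j=0}^{i-1} b_j^{(t)}\Vert^2$. Conditioning on $\mathcal{F}_{t-1} := \sigma(\pi^{(1)},\dots,\pi^{(t-1)})$ makes the anchor $w_0^{(t)} = \tilde{w}_{t-1}$ deterministic while $\pi^{(t)}$ remains uniform and independent, so Lemma~\ref{le:lem_key_rr} applied to $X_k := \nabla f(w_0^{(t)};k)$ --- with mean $\nabla F(w_0^{(t)})$ and population variance $\le \Theta\norms{\nabla F(w_0^{(t)})}^2+\sigma^2$ by Assumption~\ref{ass_general_bounded_variance} --- yields $\mathbb{E}\big[\Vert\sum_{j=0}^{i-1} b_j^{(t)}\Vert^2 \mid \mathcal{F}_{t-1}\big] = \frac{i(n-i)}{n-1}\cdot\frac1n\sum_{k=1}^{n}\norms{\nabla f(w_0^{(t)};k) - \nabla F(w_0^{(t)})}^2 \le \frac{i(n-i)}{n-1}(\Theta\norms{\nabla F(w_0^{(t)})}^2+\sigma^2)$, which is sharper than the crude bound above. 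Taking $\mathbb{E}[\cdot\mid\mathcal{F}_{t-1}]$ of the per-index estimate, summing over $i$ with $\sum_{i=0}^{n-1}\frac{i(n-i)}{n-1} = \frac{n(n+1)}{6}$ and the same two elementary sums, running the self-bounding step once more (again using $\eta_t \le \frac{1}{L\sqrt 3}$), and taking total expectation, yields \eqref{eq:ncvx_keybound1_reshuffling}. The main obstacle is the one already present in Lemma~\ref{lem_bound_weights}: the per-index bound is recursive in the earlier inner iterates and cannot be resolved term by term; the way out is to sum over a full epoch, after which the double sum contributes only the factor $L^2\eta_t^2 \le \frac13$ and the inequality closes. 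The one new ingredient is the measurability/independence bookkeeping needed to invoke Lemma~\ref{le:lem_key_rr} at the epoch-dependent anchor $\tilde{w}_{t-1}$.
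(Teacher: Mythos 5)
Your proposal is correct and follows essentially the same route as the paper's proof: the same three-way decomposition of $\nabla f(w_j^{(t)};\pi^{(t)}(j+1))$ into drift, sampling noise at $w_0^{(t)}$, and $\nabla F(w_0^{(t)})$, the same crude full-permutation bound on the noise for the generic case versus Lemma~\ref{le:lem_key_rr} (conditioned on the past) for the reshuffling case, and the same self-bounding step over a full epoch closed by $\tfrac{3L^2\eta_t^2}{2}\le\tfrac12$. All the constants and elementary sums you cite match the paper's computation.
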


\begin{proof}
First, from the first line of \eqref{eq:fact2}, by using $\norms{\sum_{i=1}^3 a_i}^2 \leq 3 \sum_{i=1}^3 \norms{a_i}^2$ in (\textit{a}) and the Cauchy-Schwarz inequality in (\textit{b}), we can derive that 
\begin{equation}\label{eq_bounded_var}
\hspace{-0.0ex}
\arraycolsep=0.1em
\begin{array}{lcl}
    \norms{ w_{i}^{(t)} - w_{0}^{(t)} }^2  & \overset{\tiny\eqref{eq:fact2}}{=}  &  \frac{i^2 \cdot \eta_t^2}{n^2} \big\Vert \frac{1}{i} \sum_{j=0}^{i-1} \nabla f ( w_{j}^{(t)} ; \pi^{(t)} (j + 1))  \big\Vert^2 \vspace{0.5ex}\\
    & \overset{\tiny(a)}{\leq} & \frac{3 i^2 \cdot \eta_t^2}{n^2} \bigg[ \Big\Vert \frac{1}{i} \sum_{j=0}^{i-1} \left( \nabla f ( w_{0}^{(t)} ; \pi^{(t)} (j + 1)) -   \nabla F ( w_0^{(t)}) \right)  \Big\Vert^2 + \Vert  \nabla F ( w_0^{(t)})\Vert^2 \bigg] \vspace{1ex}\\
    &&  + {~}  \frac{3 i^2 \cdot \eta_t^2}{n^2} \Big\Vert \frac{1}{i} \sum_{j=0}^{i-1} \left( \nabla f ( w_{j}^{(t)} ; \pi^{(t)} (j + 1))  - \nabla f ( w_{0}^{(t)} ; \pi^{(t)} (j + 1)) \right)  \Big\Vert^2 \vspace{0.5ex}\\
    & \overset{\tiny(b)}{\leq} &  \frac{3 i^2 \cdot \eta_t^2}{n^2} \frac{1}{i} \sum_{j=0}^{i-1}  \big\Vert  \nabla f ( w_{j}^{(t)} ; \pi^{(t)} (j + 1))  - \nabla f ( w_{0}^{(t)} ; \pi^{(t)} (j + 1))  \big\Vert^2 \vspace{1ex}\\
    && + {~} \frac{3 i^2 \cdot \eta_t^2}{n^2}  \Big[ \frac{1}{i} \sum_{j=0}^{i-1} \big\Vert  \nabla f ( w_{0}^{(t)} ; \pi^{(t)} (j + 1)) -   \nabla F ( w_0^{(t)})  \big\Vert^2 + \Vert  \nabla F ( w_0^{(t)})\Vert^2 \Big].
\end{array}
\hspace{-8ex}
\end{equation}
Let us introduce $\Delta := \sum_{j=0}^{n-1} \Vert w_j^{(t)} - w_0^{(t)} \Vert^2$.
Then, using \eqref{eq:Lsmooth_basic} from Assumption~\ref{ass_basic}(ii) and \eqref{eq:general_bounded_variance} from Assumption~\ref{ass_general_bounded_variance}, we can further derive from \eqref{eq_bounded_var} that
\begin{equation*}
\hspace{-0.0ex}
\arraycolsep=0.1em
\begin{array}{lcl}
\norms{ w_{i}^{(t)} - w_{0}^{(t)} }^2  
& \overset{\eqref{eq:Lsmooth_basic}}{\leq} &  \frac{3 i^2 \cdot \eta_t^2}{n^2}  \Big[ \frac{1}{i} \sum_{j=0}^{i-1} \big\Vert  \nabla f ( w_{0}^{(t)} ; \pi^{(t)} (j + 1)) -   \nabla F ( w_0^{(t)})  \big\Vert^2 + \Vert  \nabla F ( w_0^{(t)}) \Vert^2 \Big] \vspace{1ex}\\
     && + {~}  \frac{3 i^2 \cdot \eta_t^2 }{n^2} \frac{L^2}{i} \sum_{j=0}^{i-1} \Vert w_{j}^{(t)}  - w_{0}^{(t)}  \Vert^2 \vspace{1ex}\\
    & \leq &  \frac{3 i^2 \cdot \eta_t^2}{n^2}  \left[ \frac{n}{i} \cdot \frac{1}{n} \sum_{j=0}^{n-1} \big\Vert  \nabla f ( w_{0}^{(t)} ; \pi^{(t)} (j + 1)) -   \nabla F ( w_0^{(t)})  \big\Vert^2 + \Vert  \nabla F ( w_0^{(t)}) \Vert^2 \right] \vspace{1ex}\\
     && + {~} \frac{3 i L^2 \eta_t^2 }{n^2} \sum_{j=0}^{i-1} \Vert w_{j}^{(t)}  - w_{0}^{(t)} \Vert^2 \vspace{1ex}\\
    & \overset{\eqref{eq:general_bounded_variance}}{\leq} &    \frac{3 i L^2 \eta_t^2 }{n^2} \sum_{j=0}^{i-1}  \Vert w_{j}^{(t)}  - w_{0}^{(t)} \Vert^2 
    + \frac{3 i^2 \cdot \eta_t^2}{n^2}  \left[ \frac{n}{i} \big( \Theta \Vert   \nabla F ( w_0^{(t)}) \Vert^2 + \sigma^2 \big) + \Vert  \nabla F ( w_0^{(t)}) \Vert^2 \right]  \vspace{1ex}\\
    & \leq &  \frac{3 i L^2 \eta_t^2 }{n^2} \Delta + \frac{3 \eta_t^2}{n^2}  \left[ n\cdot i \big( \Theta \Vert   \nabla F ( w_0^{(t)}) \Vert^2 + \sigma^2 \big) + i^2 \Vert  \nabla F ( w_0^{(t)}) \Vert^2 \right].
\end{array}
\hspace{-0ex}
\end{equation*}
Using this estimate and the definition of $\Delta$, we have
 \begin{equation*}
\hspace{-0.5ex}
\arraycolsep=0.2em
\begin{array}{lcl}
    \Delta  & = & \sum_{i=0}^{n-1} \norms{ w_{i}^{(t)} - w_{0}^{(t)} }^2 \vspace{1ex} \\ 
    & \leq & \frac{3 L^2 \eta_t^2 }{n^2} \big( \sum_{i=0}^{n-1} i \big) \Delta + \frac{3 \eta_t^2}{n^2}  \left[ n \big( \Theta \Vert   \nabla F ( w_0^{(t)}) \Vert^2 + \sigma^2 \big) \sum_{i=0}^{n-1} i 
    + \Vert  \nabla F ( w_0^{(t)}) \Vert^2 \sum_{i=0}^{n-1} i^2 \right] \vspace{1ex}\\
    & \leq & \frac{3 L^2 \eta_t^2}{2} \Delta + \frac{3 n \eta_t^2}{2}   \left( \Theta \Vert   \nabla F ( w_0^{(t)}) \Vert^2 + \sigma^2 \right)  +  n\eta^2 \Vert  \nabla F ( w_0^{(t)}) \Vert^2  \vspace{1ex}\\
    & \leq & \frac{3L^2\eta_t^2}{2} \Delta + \frac{n \eta_t^2}{2}  \left[ \left( 3\Theta + 2 \right) \Vert   \nabla F ( w_0^{(t)})  \Vert^2 + 3 \sigma^2 \right], 
\end{array}    
\hspace{-6ex}
\end{equation*}
where the second inequality follows since $\sum_{i=0}^{n-1} i = \frac{n(n-1)}{2} \leq \frac{n^2}{2}$ and $\sum_{i=0}^{n-1} i^2 = \frac{n(n-1)(2n-1)}{6} \leq \frac{n^3}{3}$.
Rearranging the last inequality and noticing that $1 - \frac{3L^2\eta_t^2}{2} \geq \frac{1}{2}$ due to the condition $0 < \eta_t \leq \frac{1}{\sqrt{3}L}$ on $\eta_t$, we obtain \eqref{eq:ncvx_keybound1}.

Now, let $\pi^{(t)} := (\pi^{(t)}(1), \cdots, \pi^{(t)}(n))$ be sampled uniformly at random without replacement from $[n]$.
For each epoch $t \geq 1$, we denote  $\mathcal{F}_t := \sigma(w_0^{(1)},\cdots,w_0^{(t)})$, the $\sigma$-algebra generated by the iterates of Algorithm~\ref{sgd_replacement}. 
Similar to the proof of \eqref{eq_bounded_var}, we can show that
\begin{equation*}
\hspace{-0.0ex}
\arraycolsep=0.1em
\begin{array}{lcl}
    \norms{ w_{i}^{(t)} - w_{0}^{(t)} }^2   & \leq &  \frac{3 i^2 \cdot \eta_t^2}{n^2}  \big\Vert \frac{1}{i}  \sum_{j=0}^{i-1} \big(  \nabla f ( w_{0}^{(t)} ; \pi^{(t)} (j + 1)) -   \nabla F ( w_0^{(t)})  \big) \big\Vert^2  
    +  \frac{3 i^2 \cdot \eta_t^2}{n^2}  \Vert  \nabla F ( w_0^{(t)}) \Vert^2  \vspace{1ex}\\
     && + {~} \frac{3 i L^2 \eta_t^2 }{n^2} \sum_{j=0}^{n-1} \Vert w_{j}^{(t)}  - w_{0}^{(t)} \Vert^2.
\end{array}
\end{equation*}
Taking expectation conditioned on $\mathcal{F}_t$ both sides of this estimate and using $\Delta$, we get
\begin{equation}\label{eq_bounded_var22}
\hspace{-0ex}
\arraycolsep=0.2em
\begin{array}{lcl}
    \mathbb{E} \left[\norms{ w_{i}^{(t)} - w_{0}^{(t)} }^2 \mid \mathcal{F}_t \right]     &\leq &  \frac{3 i^2 \cdot \eta_t^2}{n^2} \mathbb{E} \Big[ \big\Vert \frac{1}{i} \sum_{j=0}^{i-1} \big( \nabla f ( w_{0}^{(t)} ; \pi^{(t)} (j + 1)) -   \nabla F ( w_0^{(t)}) \big)  \big\Vert^2  \mid \mathcal{F}_t \Big] \vspace{1ex}\\
    &&  + {~} \frac{3 i^2 \cdot \eta_t^2}{n^2} \Vert  \nabla F ( w_0^{(t)})\Vert^2  +  \frac{3 i L^2 \eta_t^2 }{n^2} \mathbb{E} [\Delta \mid \mathcal{F}_t].
\end{array}
\hspace{-2ex}
\end{equation}
Applying  Lemma~\ref{le:lem_key_rr} and \eqref{eq:general_bounded_variance}, we can upper bound the first term of \eqref{eq_bounded_var22} as
\begin{equation*}
\hspace{-0.0ex}
\arraycolsep=0.2em
\begin{array}{lcl}
\mathcal{T}_{[2]} & := & \mathbb{E} \Big[ \big\Vert  \frac{1}{i} \sum_{j=0}^{i-1}  \nabla f ( w_{0}^{(t)} ; \pi^{(t)} ( j + 1 ) ) - \nabla F ( w_{0}^{(t)})  \big\Vert ^2 \mid  \mathcal{F}_t \Big] \vspace{1ex}\\
    &= & \frac{n-i}{i(n-1)} \frac{1}{n} \sum_{j=0}^{n-1} \big\Vert \nabla f ( w_{0}^{(t)} ;  j+1 ) - \nabla F ( w_{0}^{(t)})  \big\Vert ^2 \\
    &\overset{\eqref{eq:general_bounded_variance}}{\leq} &  \frac{i(n-i)}{i^2(n-1)} \Big[\Theta \big\Vert \nabla  F(w_{0}^{(t)})  \big\Vert ^2 + \sigma^2 \Big]. 
\end{array}
\hspace{-0ex}
\end{equation*}
Substituting this inequality into \eqref{eq_bounded_var22}, we get
\begin{equation*}
\hspace{-0.0ex}
\arraycolsep=0.1em
\begin{array}{lcl}
    \mathbb{E} \big[\norms{ w_{i}^{(t)} - w_{0}^{(t)} }^2 \mid \mathcal{F}_t \big]      &\leq &  \frac{3 i L^2 \eta_t^2 }{n^2} \mathbb{E} [\Delta \mid \mathcal{F}_t]  
    + \frac{3 \eta_t^2}{n^2} \frac{i(n-i)}{(n-1)} \big[\Theta \big\Vert \nabla  F(w_{0}^{(t)})  \big\Vert ^2 + \sigma^2 \big] +  \frac{3 i^2 \cdot \eta_t^2}{n^2} \Vert  \nabla F ( w_0^{(t)})\Vert^2. 
\end{array}
\hspace{-0ex}
\end{equation*}
Taking full expectation over $\mathcal{F}_t$ of both sides of the last estimate, we have
\begin{equation*}
\hspace{-0.0ex}
\arraycolsep=0.1em
\begin{array}{lcl}
    \mathbb{E} \big[\norms{ w_{i}^{(t)} - w_{0}^{(t)} }^2  \big]      &\leq &  \frac{3 i L^2 \eta_t^2 }{n^2} \mathbb{E} [\Delta]  
    + \frac{3 \eta_t^2}{n^2} \frac{i(n-i)}{(n-1)} \Big[\Theta \mathbb{E} \big[ \big \Vert \nabla  F(w_{0}^{(t)})  \big\Vert ^2 \big] + \sigma^2 \Big] + \frac{3 i^2 \cdot \eta_t^2}{n^2} \mathbb{E} \big[ \Vert  \nabla F ( w_0^{(t)})\Vert^2 \big]. 
\end{array}
\hspace{-0ex}
\end{equation*}
Using the last estimate and the definition of $\Delta$, we can derive that
 \begin{equation*}
\arraycolsep=0.2em
\begin{array}{lcl}
    \mathbb{E} [\Delta]  & = & \sum_{i=0}^{n-1} \mathbb{E} \big[ \norms{ w_{i}^{(t)} - w_{0}^{(t)} }^2 \big] \vspace{1ex} \\ 
    & \leq & \frac{3 L^2 \eta_t^2 }{n^2} \cdot \mathbb{E} [\Delta] \big( \sum_{i=0}^{n-1} i \big) + \frac{3 \eta_t^2}{n^2(n-1)}  \cdot \Big[ \Theta \mathbb{E} \big[  \Vert   \nabla F ( w_0^{(t)}) \Vert^2 \big] + \sigma^2 \Big] \cdot \big[ \sum_{i=0}^{n-1} i(n-i) \big] \vspace{1ex}\\
    && + {~}  \frac{3 \eta_t^2}{n^2}  \cdot \mathbb{E} \big[ \Vert  \nabla F ( w_0^{(t)}) \Vert^2 \big] \big( \sum_{i=0}^{n-1} i^2 \big)  \vspace{1ex}\\
    & \leq & \frac{3 L^2 \eta_t^2}{2} \cdot \mathbb{E} [\Delta] +  \eta_t^2 \cdot  \Big[ \big( \Theta + n \big) \mathbb{E} \big[ \Vert   \nabla F ( w_0^{(t)}) \Vert^2 \big] + \sigma^2 \Big],
\end{array}    
\end{equation*}
where the second inequality follows since $\sum_{i=0}^{n-1} i = \frac{n(n-1)}{2} \leq \frac{n^2}{2}$, $\sum_{i=0}^{n-1} i^2 = \frac{n(n-1)(2n-1)}{6} \leq \frac{n^3}{3}$, and $\sum_{i=0}^{n-1} i(n-i) = \frac{(n-1)n(n+1)}{6} \leq \frac{n^2(n-1)}{3}$.
Now, since $0 < \eta_t \leq \frac{1}{\sqrt{3}L}$, we have $1 - \frac{3L^2\eta_t^2}{2} \geq \frac{1}{2}$.
Rearranging the last inequality and using this fact, we finally get \eqref{eq:ncvx_keybound1_reshuffling}.
\end{proof}

Let us improve Lemma~\ref{lem_bound_weights} above by using a randomized reshuffling strategy, the convexity of each $f(\cdot; i)$ for $i \in [n]$, and the strong convexity of $F$.
\begin{lem}\label{le:scvx_key_bound2}
Suppose that Assumption~\ref{ass_basic}$\mathrm{(ii)}$ holds and each $f(\cdot; i)$ is convex for $i \in [n]$.
Let $\sets{w_i^{(t)}}$ be generated by  Algorithm~\ref{sgd_replacement} and $\sigma_{*}^2$ be defined by \eqref{defn_finite}.
Let $\pi^{(t)} := (\pi^{(t)}(1), \cdots, \pi^{(t)}(n))$ be sampled uniformly at random without replacement from $[n]$.
Then, if we choose $\eta_t$ such that $0 < \eta_t \leq \frac{\sqrt{5}-1}{2L}$, then, for any $t \geq 1$, we have
\begin{equation}\label{eq:scvx_key_bound2}
\mathbb{E}\big[ \norms{\tilde{w}_t - w_{*}}^2 \big]  \ \leq \ \mathbb{E} \big[ \norms{\tilde{w}_{t-1} - w_{*}}^2 \big] \ - \ 2\eta_t \cdot \left[ F(\tilde{w}_{t-1}) - F(w_{*}) \right]  \ + \  \frac{2L\eta_t^3\sigma_{*}^2}{3n}.
\end{equation}
\end{lem}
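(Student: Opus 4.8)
The plan is to derive \eqref{eq:scvx_key_bound2} as a one-epoch recursion obtained by unrolling the inner loop of Algorithm~\ref{sgd_replacement}. Write $\alpha := \eta_t/n$ for the per-sample stepsize, so that $w_0^{(t)} = \tilde{w}_{t-1}$, $w_i^{(t)} = w_{i-1}^{(t)} - \alpha\nabla f(w_{i-1}^{(t)};\pi^{(t)}(i))$, and $\tilde{w}_t = w_n^{(t)}$. Expanding $\norms{w_i^{(t)} - w_{*}}^2 = \norms{w_{i-1}^{(t)} - w_{*}}^2 - 2\alpha\iprods{\nabla f(w_{i-1}^{(t)};\pi^{(t)}(i)),\, w_{i-1}^{(t)} - w_{*}} + \alpha^2\norms{\nabla f(w_{i-1}^{(t)};\pi^{(t)}(i))}^2$ and summing over $i = 1,\dots,n$ telescopes the left-hand side to $\norms{\tilde{w}_t - w_{*}}^2 - \norms{\tilde{w}_{t-1} - w_{*}}^2$. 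The first step is to produce the target term $-2\eta_t\big[F(\tilde{w}_{t-1}) - F(w_{*})\big]$: since each $f(\cdot;i)$ is convex and $w_{*}$ minimizes $F$ — so $\sum_i\nabla f(w_{*};\pi^{(t)}(i)) = n\nabla F(w_{*}) = 0$ by \eqref{eq:fact1} — the improved lower bound for a convex $L$-smooth function gives, for each $i$, $\iprods{\nabla f(w_{i-1}^{(t)};\pi^{(t)}(i)),\, w_{i-1}^{(t)} - w_{*}} \ge f(w_{i-1}^{(t)};\pi^{(t)}(i)) - f(w_{*};\pi^{(t)}(i)) + \tfrac{1}{2L}\norms{\nabla f(w_{i-1}^{(t)};\pi^{(t)}(i)) - \nabla f(w_{*};\pi^{(t)}(i))}^2$; lower-bounding $f(w_{i-1}^{(t)};\pi^{(t)}(i))$ further by its linearization at $w_0^{(t)}$ and summing over the permutation turns $\sum_i\big[f(w_0^{(t)};\pi^{(t)}(i)) - f(w_{*};\pi^{(t)}(i))\big]$ into $n\big[F(\tilde{w}_{t-1}) - F(w_{*})\big]$, which after multiplying by $2\alpha$ is exactly $2\eta_t\big[F(\tilde{w}_{t-1}) - F(w_{*})\big]$.

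What is left after this step is a bundle of error terms: the cross terms $-2\alpha\sum_i\iprods{\nabla f(w_0^{(t)};\pi^{(t)}(i)),\, w_{i-1}^{(t)} - w_0^{(t)}}$ from the linearization, a negative multiple $-\tfrac{\alpha}{L}\sum_i\norms{\nabla f(w_{i-1}^{(t)};\pi^{(t)}(i)) - \nabla f(w_{*};\pi^{(t)}(i))}^2$ from co-coercivity, and the positive term $\alpha^2\sum_i\norms{\nabla f(w_{i-1}^{(t)};\pi^{(t)}(i))}^2$ from the square. I would bound the inner drift $\norms{w_{i-1}^{(t)} - w_0^{(t)}}$ through a Cauchy--Schwarz/Lipschitz estimate of the partial sums of inner gradients — in the spirit of Lemma~\ref{lem_bound_weights}, but sharpened by convexity of the $f(\cdot;i)$ so that the bound on $\sum_i\norms{w_{i-1}^{(t)} - w_0^{(t)}}^2$ is controlled by the already-extracted quantity $F(\tilde{w}_{t-1}) - F(w_{*})$ together with $\eta_t^2\sigma_{*}^2$ — and then play the positive $\alpha^2$-terms off against the negative co-coercivity term via Young's inequality. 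The stepsize hypothesis $\eta_t \le \tfrac{\sqrt{5}-1}{2L}$ is used precisely here: with $x := L\eta_t$, closing the estimate reduces to the quadratic inequality $x^2 + x - 1 \le 0$, whose nonnegative solutions are exactly $x \le \tfrac{\sqrt{5}-1}{2}$, and this is what keeps the coefficient of $F(\tilde{w}_{t-1}) - F(w_{*})$ pinned at $2\eta_t$ rather than degrading to $2\eta_t\big(1 - \Ocal(L\eta_t)\big)$.

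The decisive step is to take expectation over the uniformly random permutation $\pi^{(t)}$ and apply Lemma~\ref{le:lem_key_rr} to the vectors $X_i := \nabla f(w_{*};i)$, whose average is $\nabla F(w_{*}) = 0$ and whose population variance is exactly $\sigma_{*}^2$ as in \eqref{defn_finite}: each prefix average $\tfrac1k\sum_{j=1}^k\nabla f(w_{*};\pi^{(t)}(j))$ then has expected squared norm only $\tfrac{n-k}{k(n-1)}\sigma_{*}^2$, so the variance contributions that for with-replacement SGD would sit at order $\eta_t^2\sigma_{*}^2/n$ instead sit at order $\eta_t^3\sigma_{*}^2/n$ once summed over the inner loop; carrying the constants through the $\sum_k\tfrac{n-k}{k(n-1)}$-type sums against the $\alpha^2 = \eta_t^2/n^2$ prefactors produces exactly $\tfrac{2L\eta_t^3\sigma_{*}^2}{3n}$. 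I expect this last combination to be the main obstacle: the argument goes through only if the $\Ocal(\eta_t^2)$-order error terms cancel exactly — relying jointly on the without-replacement variance reduction, the convexity-sharpened drift bound, and the golden-ratio stepsize threshold — rather than merely being dominated. Once \eqref{eq:scvx_key_bound2} is established, the strongly convex estimates follow by combining it with $\tfrac{\mu}{2}\norms{\tilde{w}_{t-1} - w_{*}}^2 \le F(\tilde{w}_{t-1}) - F(w_{*})$ and feeding the resulting recursion $Y_{t+1} \le (1-\mu\eta_t)Y_t + \tfrac{2L\sigma_{*}^2}{3n}\eta_t^3$ into Lemma~\ref{lem_general_framework} with $q = 2$.
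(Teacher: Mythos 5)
Your plan assembles the right ingredients --- co-coercivity of each convex $L$-smooth $f(\cdot;i)$ to extract both $-2\eta_t\big[F(\tilde{w}_{t-1})-F(w_{*})\big]$ and a negative multiple of $\sum_j\norms{\nabla f(w_j^{(t)};\pi^{(t)}(j+1))-\nabla f(w_{*};\pi^{(t)}(j+1))}^2$, the quadratic $x^2+x-1\le 0$ with $x=L\eta_t$ as the origin of the golden-ratio threshold, and Lemma~\ref{le:lem_key_rr} applied to $\{\nabla f(w_{*};i)\}_{i}$ for the $1/n$ gain --- but your decomposition leaves a genuine gap at exactly the point you flag as the main obstacle, and that obstacle is not incidental. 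Telescoping $\norms{w_i^{(t)}-w_{*}}^2$ step by step makes your quadratic term $\alpha^2\sum_i\norms{\nabla f(w_{i-1}^{(t)};\pi^{(t)}(i))}^2$, whose leading part $\alpha^2\sum_i\norms{\nabla f(w_{*};\pi^{(t)}(i))}^2=\eta_t^2\sigma_{*}^2/n$ is one order in $\eta_t$ too large for \eqref{eq:scvx_key_bound2}. You must then cancel it exactly against the leading part of the cross term $-2\alpha\sum_i\iprods{\nabla f(w_0^{(t)};\pi^{(t)}(i)),\,w_{i-1}^{(t)}-w_0^{(t)}}$ produced by your linearization at $w_0^{(t)}$, and the residual mixed terms --- products of $\nabla f(w_0^{(t)};\pi^{(t)}(i))-\nabla f(w_{*};\pi^{(t)}(i))$, which is $O(L\norms{w_0^{(t)}-w_{*}})$ and not small, with prefix sums of the $\nabla f(w_{*};\pi^{(t)}(j))$ --- can only be closed by Young's inequality with an $\eta_t$-dependent parameter charged against the co-coercivity budget, since the lemma assumes no strong convexity and there is no negative $\norms{\tilde{w}_{t-1}-w_{*}}^2$ term to absorb leakage. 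This can be made to work at the level of orders of magnitude, but it degrades the constants and the threshold, and you have not carried out the computation that would recover exactly $2\eta_t$, exactly $\tfrac{2L\eta_t^3\sigma_{*}^2}{3n}$, and exactly $\eta_t\le\tfrac{\sqrt{5}-1}{2L}$.

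The paper's proof sidesteps this by working at the epoch level: it expands $\norms{\tilde{w}_t-w_{*}}^2$ using the aggregated update $\tilde{w}_t=\tilde{w}_{t-1}-\tfrac{\eta_t}{n}\sum_j\nabla f(w_j^{(t)};\pi^{(t)}(j+1))$, so the quadratic term is $\tfrac{\eta_t^2}{n^2}\big\Vert\sum_j\big(\nabla f(w_j^{(t)};\cdot)-\nabla f(w_{*};\cdot)\big)\big\Vert^2$ because $\sum_j\nabla f(w_{*};\pi^{(t)}(j+1))=0$; the $\sigma_{*}^2$-contribution at order $\eta_t^2$ vanishes identically before any inequality is applied. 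It then transfers function values from $w_j^{(t)}$ to $\tilde{w}_{t-1}$ via the smoothness upper bound $\iprods{\nabla f(w_j^{(t)};\cdot),\,w_j^{(t)}-\tilde{w}_{t-1}}\le f(w_j^{(t)};\cdot)-f(\tilde{w}_{t-1};\cdot)+\tfrac{L}{2}\norms{w_j^{(t)}-\tilde{w}_{t-1}}^2$ rather than linearizing at $w_0^{(t)}$, so that all $\sigma_{*}^2$-dependence is funneled into the single quantity $B^{*}=\sum_i\big\Vert\sum_{j\ge i}\nabla f(w_{*};\pi^{(t)}(j+1))\big\Vert^2$ inside the drift bound, which already carries an $\eta_t^3$ prefactor and whose expectation equals $\tfrac{n(n+1)}{6}\sigma_{*}^2$ exactly by Lemma~\ref{le:lem_key_rr}. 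Switching to this epoch-level decomposition is the step that makes the exact constants attainable.
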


\begin{proof}
Using the first line of \eqref{eq:fact2}, with the same proof as of \eqref{eq:scvx_est1a}, we have
\begin{equation*}
\arraycolsep=0.2em
\begin{array}{lcl}
\norms{w_i^{(t)} - \tilde{w}_{t-1}}^2 &\leq & \frac{2\eta_t^2 \cdot i}{n^2}\sum_{j=0}^{i-1}\Vert \nabla{f}(w_j^{(t)}; \pi^{(t)}(j+1)) - \nabla{f}(w_{*}; \pi^{(t)}(j+1))\Vert^2 + \frac{2\eta_t^2B_i^{*}}{n^2},
\end{array}
\end{equation*}
where $B_i^{*} := \Vert \sum_{j=i}^{n-1}\nabla{f}(w_{*}; \pi^{(t)}(j+1)) \Vert^2$.
Summing up this inequality from $i := 0$ to $i := n-1$, we can derive that
\begin{align}\label{eq:scvx_est1}
\hspace{-2ex}
\sum_{i=0}^{n-1}\norms{w_i^{(t)} - \tilde{w}_{t-1}}^2 & \leq  \frac{2\eta_t^2}{n^2} \sum_{i=0}^{n-1} i \cdot  \sum_{j=0}^{i-1} \Vert \nabla{f}(w_j^{(t)}; \pi^{(t)}(j+1)) - \nabla{f}(w_{*}; \pi^{(t)}(j+1))\Vert^2  +  \frac{2\eta_t^2}{n^2} {\displaystyle \sum_{i=0}^{n-1}} B_i^{*} \nonumber\\
&\leq  \eta_t^2\sum_{j=0}^{n-1} \Vert \nabla{f}(w_j^{(t)}; \pi^{(t)}(j+1)) - \nabla{f}(w_{*}; \pi^{(t)}(j+1))\Vert^2 \ + \  \frac{2\eta_t^2B^{*}}{n^2}.
\end{align}
where $B^{*} := \sum_{i=0}^{n-1}B_i^{*} =  \sum_{i=0}^{n-1}\Vert \sum_{j=i}^{n-1}\nabla{f}(w_{*}; \pi^{(t)}(j+1) ) \Vert^2$.

Next, using the second line of \eqref{eq:fact2} and the Cauchy-Schwarz inequality, we have
\begin{equation}\label{eq:scvx_est2} 
\arraycolsep=0.2em
\begin{array}{lcl}
\norms{\tilde{w}_t - w_{*}}^2 &= & \norms{\tilde{w}_{t-1} - w_{*}}^2 + \frac{2\eta_t}{n}\sum_{j=0}^{n-1}\iprods{\nabla{f}(w_j^{(t)}; \pi^{(t)}(j+1)), w_{*} - \tilde{w}_{t-1}} \vspace{1.5ex}\\
&& + {~} \frac{\eta_t^2}{n^2}\Vert \sum_{j=0}^{n-1} (\nabla{f}(w_j^{(t)}; \pi^{(t)}(j+1)) - \nabla{f}(w_{*}; \pi^{(t)}(j+1)) ) \Vert^2 \vspace{1.5ex}\\
&\leq & \norms{\tilde{w}_{t-1} - w_{*}}^2 + \frac{2\eta_t}{n}\sum_{j=0}^{n-1}\iprods{\nabla{f}(w_j^{(t)}; \pi^{(t)}(j+1)), w_{*} - \tilde{w}_{t-1} } \vspace{1.5ex}\\
&& + {~} \frac{\eta_t^2}{n} \sum_{j=0}^{n-1}\Vert \nabla{f}(w_j^{(t)}; \pi^{(t)}(j+1)) - \nabla{f}(w_{*}; \pi^{(t)}(j+1)) \Vert^2.
\end{array}
\end{equation}
We can upper bound the second term on the right-hand side of \eqref{eq:scvx_est2} as follows:
\begin{equation*}
\arraycolsep=0.2em
\begin{array}{lcl}
\mathcal{T}_{[1]} & := & \sum_{j=0}^{n-1}\iprods{\nabla{f}(w_j^{(t)}; \pi^{(t)}(j+1)), w_{*} - \tilde{w}_{t-1}}  \vspace{1.5ex}\\
& = & \sum_{j=0}^{n-1}\iprods{\nabla{f}(w_j^{(t)}; \pi^{(t)}(j+1)), w_{*} -  w_j^{(t)}} + \sum_{j=0}^{n-1}\iprods{\nabla{f}(w_j^{(t)}; \pi^{(t)}(j+1)),  w_j^{(t)} - \tilde{w}_{t-1}} \vspace{1ex}\\
& \overset{(a)}{\leq} & \sum_{j=0}^{n-1}\iprods{\nabla{f}(w_j^{(t)}; \pi^{(t)}(j+1)), w_{*} -  w_j^{(t)}} + \frac{L}{2}\sum_{j=0}^{n-1}\norms{w_j^{(t)} - \tilde{w}_{t-1}}^2 \vspace{1.5ex}\\
&& + {~} \sum_{j=0}^{n-1} \big[f(w_j^{(t)}; \pi^{(t)}(j+1)) - f(\tilde{w}_{t-1}; \pi^{(t)}(j+1)) \big] \vspace{1.5ex}\\
& = & -\sum_{j=0}^{n-1}\big[f(w_{*}; \pi^{(t)}(j+1)) - f(w_j^{(t)}; \pi^{(t)}(j+1)) -  \iprods{\nabla{f}(w_j^{(t)}; \pi^{(t)}(j+1)), w_{*} -  w_j^{(t)}} \big] \vspace{1.5ex}\\
&& + {~} \frac{L}{2}\sum_{j=0}^{n-1}\norms{w_j^{(t)} - \tilde{w}_{t-1}}^2 + \sum_{j=0}^{n-1} \big[f(w_{*}; \pi^{(t)}(j+1)) - f(\tilde{w}_{t-1}; \pi^{(t)}(j+1)) \big]  \vspace{1ex}\\
& \overset{(b)}{\leq} & \frac{L}{2}\sum_{j=0}^{n-1} \norms{w_j^{(t)} - \tilde{w}_{t-1}}^2 - \frac{1}{2L}\sum_{j=0}^{n-1}\Vert \nabla{f}(w_j^{(t)}; \pi^{(t)}(j+1)) - \nabla{f}(w_{*}; \pi^{(t)}(j+1)) \Vert^2 \vspace{1.5ex}\\
&& - {~} n\big[ F(\tilde{w}_{t-1}) - F(w_{*}) \big].  
\end{array}
\end{equation*}
Here, we have used  in (a) the following estimate
\begin{equation*}
\iprods{\nabla{f}(w_j^{(t)};\cdot), w_j^{(t)} - \tilde{w}_{t-1}} \leq f(w_j^{(t)};\cdot) - f(\tilde{w}_{t-1}; \cdot) + \frac{L}{2}\norms{w_j^{(t)} - \tilde{w}_{t-1}}^2
\end{equation*}
and  in (b) the following two estimates:
 \begin{equation*}
 \begin{array}{ll}
&f(w_{*};\cdot) - f(w_j^{(t)};\cdot)  - \iprods{\nabla{f}(w_j^{(t)};\cdot), w_{*} - w_j^{(t)}} \geq \frac{1}{2L}\norms{\nabla{f}(w_j^{(t)};\cdot) - \nabla{f}(w_{*};\cdot)}^2, \vspace{1ex}\\ 
\text{and} &\sum_{j=0}^{n-1}\big[f(w_{*}; \pi^{(t)}(j+1)) - f(\tilde{w}_{t-1}; \pi^{(t)}(j+1)) \big] = n\left[ F(w_{*}) - F(\tilde{w}_{t-1}) \right].
\end{array} 
 \end{equation*}

Substituting \eqref{eq:scvx_est1} into $\mathcal{T}_{[1]}$, we can further upper bound it as 
\begin{equation*}
\arraycolsep=0.2em
\begin{array}{lcl}
\mathcal{T}_{[1]} & \leq &  \left( \frac{L\eta_t^2}{2}  - \frac{1}{2L}\right)\sum_{j=0}^{n-1}\Vert \nabla{f}(w_j^{(t)}; \pi^{(t)}(j+1) - \nabla{f}(w_{*}; \pi^{(t)}(j+1) \Vert^2  \vspace{1ex}\\
&& - {~}  n\left[ F(\tilde{w}_{t-1}) - F(w_{*}) \right] + \frac{L\eta_t^2B^{*}}{n^2}.
\end{array}
\end{equation*}
Using this upper bound of $\mathcal{T}_{[1]}$ into \eqref{eq:scvx_est2}, we get
\begin{equation*} 
\arraycolsep=0.1em
\begin{array}{lcl}
\norms{\tilde{w}_t - w_{*}}^2 &\leq & \norms{\tilde{w}_{t-1} - w_{*}}^2 - 2\eta_t \left[ F(\tilde{w}_{t-1}) - F(w_{*}) \right]  + \frac{2L\eta_t^3B^{*}}{n^3} \vspace{1ex}\\
&& + {~} \left( \frac{L\eta_t^3}{n}  - \frac{\eta_t}{Ln} + \frac{\eta_t^2}{n} \right) \sum_{j=0}^{n-1}\Vert \nabla{f}(w_j^{(t)}; \pi^{(t)}(j+1)) - \nabla{f}(w_{*}; \pi^{(t)}(j+1)) \Vert^2.
\end{array}
\end{equation*}
Let us choose $\eta_t > 0$ such that $L^2\eta_t^3 + L\eta_t^2 - \eta_t  \leq 0$ (or equivalently, $0 < \eta_t \leq \frac{\sqrt{5}-1}{2L}$).
Then, this estimate reduces to 
\begin{equation}\label{eq:scvx_est4}
\begin{array}{lcl}
\norms{\tilde{w}_t - w_{*}}^2  \leq  \norms{\tilde{w}_{t-1} - w_{*}}^2 - 2\eta_t \left[ F(\tilde{w}_{t-1}) - F(w_{*}) \right]   + \frac{2L\eta_t^3B^{*}}{n^3}.
\end{array}
\end{equation}
If $\pi^{(t)} := (\pi^{(t)}(1),\cdots,\pi^{(t)}(n))$ is uniformly  sampled  at  random  without  replacement  from $[n]$, 
then by Lemma~\ref{le:lem_key_rr}, we have $\mathbb{E}\left[ \frac{1}{n-i} \sum_{j=i}^{n-1} \nabla f ( w_{*} ; \pi^{(t)} (j + 1)) \right] = \nabla{F} ( w_{*} )$ and
\begin{equation*}
\arraycolsep=0.1em
\begin{array}{lcl}
    \mathbb{E}[B^{*}] &= & \mathbb{E} \Big[ \sum_{i=0}^{n-1} \big\Vert \sum_{j=i}^{n-1} \nabla f ( w_{*} ; \pi^{(t)} (j + 1))  \big\Vert^2 \Big] \vspace{1.25ex}\\
    & = & \sum_{i=0}^{n-1} (n-i)^2 \mathbb{E} \Big[ \big\Vert \frac{1}{n-i} \sum_{j=i}^{n-1} \nabla{f} ( w_{*} ; \pi^{(t)} (j + 1))  \big\Vert^2 \Big] \vspace{1.25ex}\\
    &=&  \sum_{i=0}^{n-1}(n - i)^2 \mathbb{E} \Big[ \big\Vert \frac{1}{n-i} \sum_{j=i}^{n-1} \nabla{f} ( w_{*} ; \pi^{(t)} (j + 1))  - \nabla{F} ( w_* )  \big\|^2 \Big] \vspace{1.25ex}\\
    &= & \sum_{i=0}^{n-1}\frac{(n-i)^2 i}{(n-i)(n-1)} \frac{1}{n} \sum_{j=0}^{n-1} \big\Vert \nabla{f} ( w_{*} ; \pi^{(t)} (j + 1))  \big\Vert^2 \vspace{1.25ex}\\
    &\overset{\eqref{defn_finite}}{=} & \frac{\sigma_{*}^2}{n-1} \sum_{i=0}^{n-1} i(n-i) \vspace{1.25ex}\\
    & = &    \frac{n(n+1)\sigma_{*}^2}{6}. 
\end{array}
\end{equation*}
Taking expectation both sides of \eqref{eq:scvx_est4} and using this upper bound of $B^{*}$, we obtain \eqref{eq:scvx_key_bound2}.
\end{proof}

Finally, we will need the following bound on $F$ in the sequel.
\begin{lem}\label{le:F_bound}
Suppose that Assumption \ref{ass_basic}$\mathrm{(ii)}$  holds for \eqref{ERM_problem_01}. 
Let $\sets{w_i^{(t)}}$ be generated by  Algorithm~\ref{sgd_replacement} with any shuffling strategy $\pi^{(t)}$ and a learning rate $\eta_i^{(t)} := \frac{\eta_t}{n} > 0$ for a given positive sequence $\sets{\eta_t}$ such that $ 0 < \eta_t \leq \frac{1}{L}$. 
Then, for any $t \geq 1$, we have
\begin{equation}\label{eq_key_thm_01_est}
    F( w_0^{(t+1)} ) \leq F( w_0^{(t)} ) - \frac{\eta_t}{2} \norms{ \nabla F( w_0^{(t)} ) }^2 + \frac{L^2 \eta_t}{2n}  \sum_{i=0}^{n-1} \norms{ w_i^{(t)} - w_0^{(t)} }^2.
\end{equation}
\end{lem}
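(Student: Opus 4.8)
The plan is to invoke the descent inequality \eqref{eq:Lsmooth} for the $L$-smooth function $F$ on the two consecutive epoch iterates $w_0^{(t)} = \tilde{w}_{t-1}$ and $w_0^{(t+1)} = \tilde{w}_t = w_n^{(t)}$, and then to control the resulting cross term by exploiting the fact that, since $\pi^{(t)}$ is a permutation of $[n]$, the ``frozen-point'' averaged gradient $\frac{1}{n}\sum_{j=0}^{n-1}\nabla f(w_0^{(t)}; \pi^{(t)}(j+1))$ coincides with the full gradient $\nabla F(w_0^{(t)})$.

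First, from the second line of \eqref{eq:fact2} we have $w_0^{(t+1)} - w_0^{(t)} = -\eta_t \bar{g}_t$, where $\bar{g}_t := \frac{1}{n}\sum_{j=0}^{n-1}\nabla f(w_j^{(t)}; \pi^{(t)}(j+1))$ is the averaged gradient along the inner iterates. Substituting this into \eqref{eq:Lsmooth} with $w = w_0^{(t+1)}$ and $\hat{w} = w_0^{(t)}$ gives
\[
F(w_0^{(t+1)}) \leq F(w_0^{(t)}) - \eta_t \iprods{\nabla F(w_0^{(t)}), \bar{g}_t} + \tfrac{L \eta_t^2}{2}\norms{\bar{g}_t}^2.
\]
Next I would expand the inner product using the polarization identity $\iprods{a,b} = \tfrac12(\norms{a}^2 + \norms{b}^2 - \norms{a-b}^2)$ with $a = \nabla F(w_0^{(t)})$ and $b = \bar{g}_t$, which rewrites the right-hand side as
\[
F(w_0^{(t)}) - \tfrac{\eta_t}{2}\norms{\nabla F(w_0^{(t)})}^2 - \tfrac{\eta_t}{2}(1 - L\eta_t)\norms{\bar{g}_t}^2 + \tfrac{\eta_t}{2}\norms{\nabla F(w_0^{(t)}) - \bar{g}_t}^2.
\]
Since $0 < \eta_t \leq \frac{1}{L}$, the term $-\tfrac{\eta_t}{2}(1 - L\eta_t)\norms{\bar{g}_t}^2$ is nonpositive and is dropped.

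It remains to bound the ``variance-like'' quantity $\norms{\nabla F(w_0^{(t)}) - \bar{g}_t}^2$. Using the permutation identity $\nabla F(w_0^{(t)}) = \frac{1}{n}\sum_{j=0}^{n-1}\nabla f(w_0^{(t)}; \pi^{(t)}(j+1))$, I write $\nabla F(w_0^{(t)}) - \bar{g}_t = \frac{1}{n}\sum_{j=0}^{n-1}\big(\nabla f(w_0^{(t)}; \pi^{(t)}(j+1)) - \nabla f(w_j^{(t)}; \pi^{(t)}(j+1))\big)$, apply Jensen's (Cauchy--Schwarz) inequality to move the square inside the average, and then apply the $L$-smoothness bound \eqref{eq:Lsmooth_basic} termwise, obtaining $\norms{\nabla F(w_0^{(t)}) - \bar{g}_t}^2 \leq \frac{L^2}{n}\sum_{j=0}^{n-1}\norms{w_j^{(t)} - w_0^{(t)}}^2$. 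Plugging this back in yields exactly \eqref{eq_key_thm_01_est}. The argument has no genuine obstacle; the only delicate points are keeping track of the sign of the $\norms{\bar{g}_t}^2$ term---this is precisely where the stepsize restriction $\eta_t \leq 1/L$ enters---and correctly identifying $w_0^{(t+1)} = \tilde{w}_t$ with the one-epoch update formula in \eqref{eq:fact2}.
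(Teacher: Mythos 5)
Your proposal is correct and follows essentially the same route as the paper's proof: the one-epoch smoothness descent inequality, the polarization identity $\iprods{u,v}=\tfrac12(\norms{u}^2+\norms{v}^2-\norms{u-v}^2)$, dropping the $-\tfrac{\eta_t}{2}(1-L\eta_t)\norms{\bar g_t}^2$ term via $\eta_t\le 1/L$, the permutation identity for $\nabla F(w_0^{(t)})$, Cauchy--Schwarz, and termwise $L$-smoothness. No gaps.
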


\begin{proof}
Since $F$ is $L$-smooth by Assumption \ref{ass_basic}$\mathrm{(ii)}$, we can derive 
\begin{equation*} 
\arraycolsep=0.1em
\begin{array}{lcl}
    F( w_0^{(t+1)} )  & \overset{\eqref{eq:Lsmooth}}{\leq}  &   F( w_0^{(t)} ) + \nabla F( w_0^{(t)} )^{\top}(w_0^{(t+1)} - w_0^{(t)}) + \frac{L}{2}\norms{w_0^{(t+1)} - w_0^{(t)}}^2 \vspace{1ex}\\
    &\overset{\tiny\eqref{eq:fact2}}{=}  & F( w_0^{(t)} ) - \eta_t \nabla F( w_0^{(t)} )^\top \left( \frac{1}{n} \sum_{i=0}^{n-1} \nabla f (w_i^{(t)} ; \pi^{(t)} (i + 1) ) \right)  \vspace{1ex}\\
    &&  + {~} \frac{L \eta_t^{2}}{2} \big\Vert \frac{1}{n}\sum_{i=0}^{n-1} \nabla f (w_i^{(t)} ; \pi^{(t)} (i + 1) ) \big\Vert^2  \vspace{1ex}\\
    & \overset{\tiny(a)}{=} &   F( w_0^{(t)} ) - \frac{\eta_t}{2} \norms{ \nabla F( w_0^{(t)} )}^2 + \frac{\eta_t}{2} \big\Vert \nabla F( w_0^{(t)} )  - \frac{1}{n}\sum_{i=0}^{n-1} \nabla f (w_i^{(t)} ; \pi^{(t)} (i + 1) ) \big\Vert^2 \vspace{1ex}\\
    &&  - {~} \frac{\eta_t}{2} \left( 1 - L\eta_t  \right) \big\Vert \frac{1}{n}\sum_{i=0}^{n-1} \nabla f (w_i^{(t)} ; \pi^{(t)} (i + 1) ) \big\Vert^2 \vspace{1ex}\\
    & \overset{\tiny(b)}{\leq}  & F( w_0^{(t)} )  +  \frac{\eta_t}{2} \big\Vert \frac{1}{n}\sum_{i=0}^{n-1} \nabla f (w_0^{(t)} ; \pi^{(t)} (i + 1) )  - \frac{1}{n}\sum_{i=0}^{n-1} \nabla f (w_i^{(t)} ; \pi^{(t)} (i + 1) ) \big\Vert^2 \vspace{1ex}\\
    && - {~}  \frac{\eta_t}{2} \norms{ \nabla F( w_0^{(t)} )}^2 \vspace{1ex}\\
    & \overset{\tiny(c)}{\leq} &  F( w_0^{(t)} ) +  \frac{\eta_t}{2n} \sum_{i=0}^{n-1} \big\Vert \nabla f (w_0^{(t)} ; \pi^{(t)} (i + 1) )  - \nabla f (w_i^{(t)} ; \pi^{(t)} (i + 1) ) \big\Vert^2 \vspace{1ex}\\
     && - {~} \frac{\eta_t}{2} \norms{ \nabla F( w_0^{(t)} ) }^2  \vspace{1ex}\\
    & \overset{\tiny\eqref{eq:Lsmooth_basic}}{\leq} & F( w_0^{(t)} ) - \frac{\eta_t}{2} \norms{ \nabla F( w_0^{(t)} ) }^2 + \frac{L^2 \eta_t}{2n}  \sum_{i=0}^{n-1} \norms{ w_i^{(t)} - w_0^{(t)} }^2,
\end{array}
\end{equation*}
where (\textit{a}) follows from $u^{\top}v = \frac{1}{2}(\norms{u}^2 + \norms{v}^2 - \norms{u - v}^2)$, (\textit{b}) follows from the fact that $\eta_t \leq \frac{1}{L}$, and (\textit{c}) is from the Cauchy-Schwarz inequality. 
\end{proof}

\section{Convergence Analysis for Strongly Convex Case}\label{sec_appendix_strongly_convex}
In this section, we present the full proof of the results in the main text of Section~\ref{sec_analysis_01}. 

\subsection{Proofs of  Theorem~\ref{thm_main_result_02_const} and Theorem~\ref{thm_main_result_02}: The strongly convex case}
\label{subsec:appendix_Th12_proof}

\begin{proof}[\mytxtbi{The proof of Theorem \ref{thm_main_result_02_const}}]
Using \eqref{eq_thm_weight_01}, we can further estimate \eqref{eq_key_thm_01_est} as follows:
\begin{equation*}
\arraycolsep=0.2em
\begin{array}{lcl}
    F( w_0^{(t+1)} ) & \leq &  F( w_0^{(t)} ) - \frac{\eta_t}{2} \norms{ \nabla F( w_0^{(t)} )  }^2 + \frac{L^2 \eta_t}{2} \frac{1}{n}\sum_{i=0}^{n-1} \norms{ w_i^{(t)} - w_0^{(t)}}^2 \vspace{1ex}\\
    & \overset{\tiny\eqref{eq_thm_weight_01}}{\leq} &  F( w_0^{(t)} ) - \frac{\eta_t}{2} \norms{ \nabla F( w_0^{(t)} ) }^2 \vspace{1ex}\\
    && + {~} \frac{L^2 \eta_t}{2} \left( \eta_t^2 \cdot \frac{8 L^2}{3} \norms{ w_{0}^{(t)} - w_{*} }^2 \ + \ \eta_t^4 \cdot \frac{16 L^2 \sigma_{*}^2 }{3} \ + \ \eta_t^2 \cdot 2 \sigma_{*}^2 \right) \vspace{1ex}\\
    & \overset{\tiny \eqref{eq:stronglyconvex}}{\leq} & F( w_0^{(t)} ) - \mu \eta_t \big[ F( w_0^{(t)} ) - F(w_{*}) \big] +  \frac{8 L^4 \eta_t^3}{3 \mu} \big[ F( w_0^{(t)} ) - F(w_{*}) \big] \vspace{1ex}\\
    &&   + {~} \frac{8 L^4 \sigma_{*}^2}{3}\cdot \eta_t^5  + L^2 \sigma_{*}^2 \cdot \eta_t^3, 
\end{array}
\end{equation*}
Subtracting $F(w_{*})$ from both sides of  the last inequality, we can further derive
\begin{equation}\label{eq_rec_01}
    F( w_0^{(t+1)} ) - F(w_{*}) \leq \left[ 1 - \eta_t \left( \mu - \tfrac{8 L^4}{3 \mu} \eta_t^2 \right) \right] \big[ F( w_0^{(t)} ) - F(w_{*}) \big] + L^2 \sigma_{*}^2 \eta_t^3 \left( 1 +  \tfrac{8 L^2  \eta_t^2  }{3} \right).
\end{equation}
Now, assume that $0 < \eta_t < \sqrt{\frac{3}{8}} \frac{\mu}{L^2}$. 
Then, one can show that
\begin{equation*}
\mu - \frac{4 L^4}{3 \mu} \eta_t^2 \geq \mu - \frac{2}{3} \mu = \frac{\mu}{3} > 0~~~\text{and}~~~
\frac{8 L^4  \sigma_{*}^2}{3}\eta_t^2   < \frac{8 L^4  \sigma_{*}^2}{3}\cdot  \frac{3}{8} \frac{\mu^2}{L^4} = \mu^2  \sigma_{*}^2.
\end{equation*}
Using these bounds into \eqref{eq_rec_01}, we can further upper bound it as
\begin{equation}\label{eq_rec_01b}
    F( w_0^{(t+1)} ) - F(w_{*}) \leq \left( 1 - \frac{\mu}{3}\eta_t \right) \big[F( w_0^{(t)} ) - F(w_{*})\big] + \eta_t^3 \left( \mu^2  + L^2 \right)  \sigma_{*}^2.
\end{equation}
Note that we have imposed $\eta_t \leq \min\set{\frac{1}{2L}, \sqrt{\frac{3}{8}} \frac{\mu}{L^2}}$ due to $\eta_t \leq \frac{1}{2L}$ in Lemma~\ref{lem_bound_weights}.

Now, let us define $Y_t := F( w_0^{(t)} ) - F(w_{*}) = F(\tilde{w}_{t-1}) - F(w_{*})\geq 0$, $\rho := \frac{\mu}{3}$, and $D := (\mu^2  + L^2)\sigma_{*}^2$.
The estimate \eqref{eq_rec_01b} becomes
\begin{equation*}
Y_{t+1} \leq (1 - \rho \cdot\eta_t) Y_t + D \eta_t^3. 
\end{equation*} 
Applying \eqref{eq_rate_t3} of Lemma~\ref{lem_general_framework} with $q=2$ and $\eta := \frac{2\log(T)}{\rho T} = \frac{6\log(T)}{\mu T}$, we obtain
\begin{equation*} 
\arraycolsep=0.2em
\begin{array}{lcl}
 Y_{T+1} & \leq & (1 - \rho \eta)^{T} Y_1 + \frac{D\eta^{2}[ 1 - (1 - \rho \eta)^{T}]}{\rho} \leq  Y_1\exp( -\rho\eta T) + \frac{D\eta^2}{\rho} \vspace{1ex}\\
 &= & \big[F(\tilde{w}_0) - F_{*} \big]\exp\big( -2\log(T) \big) + \frac{54(\mu^2 + L^2)\sigma_{*}^2 \log(T)^2}{\mu^3 T^2}.
 \end{array}
\end{equation*}
This estimate leads to $F(\tilde{w}_T) - F(w_{*}) \leq \frac{\big[F(\tilde{w}_0) - F(w_{*}) \big]}{T^2} + \frac{54(\mu^2 + L^2)\sigma_{*}^2 \log(T)^2}{\mu^3 T^2}$, which is exactly \eqref{eq:scvx_bound1_const} after taking expectation.
To guarantee $\eta_t = \frac{6\log(T)}{\mu T} \leq \min\set{\frac{1}{2L}, \sqrt{\frac{3}{8}} \frac{\mu}{L^2}}$, we need to choose $T$ such that $\frac{\log(T)}{T} \leq \min\set{\frac{\mu}{12L},  \frac{\mu^2}{6L^2}\sqrt{\frac{3}{8}}}$.
This condition holds if $T \geq 12\kappa^2\log(T)$.

Next, under Assumption~\ref{ass_general_bounded_variance} and the randomized reshuffling strategy of $\pi^{(t)}$, using \eqref{eq:ncvx_keybound1_reshuffling}, we can further estimate \eqref{eq_key_thm_01_est} as follows:
\begin{equation*}
\arraycolsep=0.1em
\begin{array}{lcl}
   \mathbb{E}\big[ F( w_0^{(t+1)} ) \big] & \leq &   \mathbb{E}\big[ F( w_0^{(t)} ) \big]  - \frac{\eta_t}{2}  \mathbb{E}\big[ \norms{ \nabla F( w_0^{(t)} )  }^2 \big] + \frac{L^2 \eta_t}{2n} \sum_{i=0}^{n-1}  \mathbb{E}\big[ \norms{ w_i^{(t)} - w_0^{(t)}}^2 \big] \vspace{1ex}\\
    & \overset{\tiny \eqref{eq:ncvx_keybound1_reshuffling}}{\leq} &  \mathbb{E}\big[  F( w_0^{(t)} ) \big]  - \frac{\eta_t}{2} \mathbb{E}\big[ \norms{ \nabla F( w_0^{(t)} ) }^2 \big] 
     +  \frac{L^2\eta_t^3}{n}  \left[ \left(\Theta + n \right) \mathbb{E} \big[ \Vert   \nabla F ( w_0^{(t)}) \Vert^2 \big] +  \sigma^2 \right] \vspace{1ex}\\
    & \overset{\tiny \eqref{eq:stronglyconvex}}{\leq} & \mathbb{E} \big[ F( w_0^{(t)} ) \big] - \mu \eta_t \mathbb{E} \big[ F( w_0^{(t)} ) - F(w_{*}) \big] +  \frac{2 L^2(\Theta + n) \eta_t^3}{n \mu} \mathbb{E} \big[ F( w_0^{(t)} ) - F(w_{*}) \big] \vspace{1ex}\\
    &&   + {~} \frac{L^2 \sigma_{*}^2 \eta_t^3}{n}.
\end{array}
\end{equation*}
Subtracting $F(w_{*})$ from both sides of  the last inequality, we can further derive
\begin{equation}\label{eq_rec_01_case2}
    \mathbb{E} \big[ F( w_0^{(t+1)} ) - F(w_{*}) \big] \leq \left[ 1 - \eta_t \left( \mu - \tfrac{2 L^2(\Theta+n)}{n\mu} \eta_t^2 \right) \right] \mathbb{E} \big[ F( w_0^{(t)} ) - F(w_{*}) \big] + \frac{L^2 \sigma^2 \eta_t^3}{n}.
\end{equation}
Now, assume that $0 < \eta_t \leq \frac{\mu}{2L\sqrt{\Theta/n + 1}} = \frac{1}{2\kappa\sqrt{1 + \Theta/n}}$. 
Then, one can show that $\mu - \frac{2 L^2(\Theta + n)}{n \mu} \eta_t^2 \geq \mu - \frac{1}{2} \mu = \frac{\mu}{2} > 0$.
Using this bound into \eqref{eq_rec_01_case2} and noticing that $\tilde{w}_t = w_0^{(t+1)}$, we can further upper bound it as
\begin{equation}\label{eq_rec_01b_case2}
   \mathbb{E} \big[ F( \tilde{w}_t ) - F(w_{*}) \big] \leq \left( 1 - \frac{\mu}{2}\eta_t \right) \mathbb{E} \big[F( \tilde{w}_{t-1} ) - F(w_{*})\big] + \frac{L^2\sigma^2}{n}\cdot \eta_t^3.
\end{equation}
Note that we have imposed $\eta_t \leq \min\set{\frac{1}{\sqrt{3}L},  \frac{1}{2\kappa\sqrt{1 + \Theta/n}} }$ due to $\eta_t \leq \frac{1}{\sqrt{3}L}$ in Lemma~\ref{le:ncvx_keybound1}.

Let $Y_t :=  \mathbb{E} \big[ F(\tilde{w}_{t-1} ) - F(w_{*}) \big]$, $\eta_t := \eta > 0$ be fixed for all $t \geq 1$, and $\rho := \frac{\mu}{2}$.
Then, from \eqref{eq_rec_01b_case2},  we have $Y_{t+1} \leq (1 - \rho\eta)Y_t + \frac{L^2\sigma^2\eta^3}{n}$.
Applying \eqref{eq_rate_t3} of Lemma~\ref{lem_general_framework} with $q=2$ and $\eta := \frac{4\log(\sqrt{n}T)}{\mu T}$, we obtain
\begin{equation*} 
\arraycolsep=0.2em
\begin{array}{lcl}
 Y_{T+1} & = &  \mathbb{E}\big[ F(\tilde{w}_{T}) - F(w_{*}) \big] \leq (1 - \rho \eta)^{T} Y_1 + \frac{2L^2\sigma^2\eta^{2}[ 1 - (1 - \rho \eta)^{T}]}{n\mu} \leq  Y_1\exp( -\mu \eta T) + \frac{2L^2\sigma^2\eta^2}{n\mu} \vspace{1ex}\\
 &= & \mathbb{E}\big[ F(\tilde{w}_{0}) - F(w_{*}) \big] \exp\big( -2\log(\sqrt{n} T) \big) + \frac{2L^2\sigma^2 \log(\sqrt{n}T)^2}{\mu^3 n T^2},
 \end{array}
\end{equation*}
which is exactly  \eqref{eq:scvx_bound2_const0}.
Note that to guarantee $\rho\eta = \frac{2\log(\sqrt{n}T)}{T} \leq 1$ and $\eta =  \frac{4\log(\sqrt{n}T)}{\mu T} \leq \frac{1}{2\kappa\sqrt{1 + \Theta/n}}$, we need to choose $T \geq \frac{8L\sqrt{\Theta/n+1}}{\mu^2}\log(\sqrt{n}T)$.

Finally, if $\pi^{(t)}$ is sampled at random without replacement and each $f(\cdot; i)$ is convex for $i \in [n]$, from \eqref{eq:scvx_key_bound2} and $F(\tilde{w}_{t-1}) - F(w_{*}) \geq \frac{\mu}{2}\norms{\tilde{w}_{t-1} - w_{*}}^2$, we have
\begin{equation*}
\mathbb{E}\big[ \norms{\tilde{w}_t - w_{*}}^2 \big]  \leq   (1 - \mu\eta_t) \mathbb{E} \big[ \norms{\tilde{w}_{t-1} - w_{*}}^2 \big]  +  \frac{2L\eta_t^3\sigma_{*}^2}{3n}.
\end{equation*}
Let us denote $Y_t := \mathbb{E}\big[ \norms{\tilde{w}_{t-1} - w_{*}}^2 \big]$. Then, the last inequality can be written as $Y_{t+1} \leq (1 - \mu\eta)Y_t + \frac{2L\eta^3\sigma_{*}^2}{3n}$.
Applying \eqref{eq_rate_t3} of Lemma~\ref{lem_general_framework} with $q=2$ and $\eta := \frac{2\log(\sqrt{n}T)}{\mu T}$, we obtain
\begin{equation*} 
\arraycolsep=0.2em
\begin{array}{lcl}
 Y_{T+1} & = &  \mathbb{E}\big[ \norms{\tilde{w}_{T} - w_{*}}^2 \big] \leq (1 - \mu \eta)^{T} Y_1 + \frac{2L\sigma_{*}^2\eta^{2}[ 1 - (1 - \rho \eta)^{T}]}{3n\mu} \leq  Y_1\exp( -\mu \eta T) + \frac{2L\sigma_{*}^2\eta^2}{3n\mu} \vspace{1ex}\\
 &= & \mathbb{E}\big[ \norms{\tilde{w}_{0} - w_{*}}^2 \big] \exp\big( -2\log(\sqrt{n} T) \big) + \frac{8L\sigma_{*}^2 \log(\sqrt{n}T)^2}{3 \mu^3 n T^2},
 \end{array}
\end{equation*}
which is exactly  \eqref{eq:scvx_bound2_const}.
Note that to guarantee $\mu \eta = \frac{2\log(\sqrt{n}T)}{T} \leq 1$ and $\eta =  \frac{2\log(\sqrt{n}T)}{\mu T} \leq \frac{\sqrt{5}-1}{2L}$, we need to choose $T$ such that $\frac{\log(T\sqrt{n})}{T} \leq \min\big\{ \frac{1}{2}, \frac{(\sqrt{5}-1)\mu}{2L}\big\}$.
\end{proof}

\begin{proof}[\mytxtbi{The proof of Theorem \ref{thm_main_result_02}}]
Similar to the proof of Theorem~\ref{thm_main_result_02_const}, we define $Y_t := \mathbb{E}\big[ F(\tilde{w}_{t-1}) - F(w_{*}) \big] \geq 0$, $\rho := \frac{\mu}{3}$, and $D := (\mu^2  + L^2)\sigma_{*}^2$.
The estimate \eqref{eq_rec_01b} implies  $Y_{t+1} \leq (1 - \rho \cdot\eta) Y_t + D \eta^3$.
Moreover, since $\eta_t = \frac{6}{\mu(t + \beta)} = \frac{2}{\rho(t + \beta)}$ for $\beta \geq 1$, apply Lemma~\ref{lem_general_framework} with $q=2$, we obtain 
\begin{equation*}
    Y_{t+1} \leq \frac{\beta(\beta - 1)}{(t+\beta - 1)(t+\beta)}Y_1 + \frac{8 D\log(t+\beta+1)}{\rho^3(t+\beta-1)(t+\beta)}, 
\end{equation*}
which leads to \eqref{eq:scvx_bound1} after substituting $Y_{t+1} := \mathbb{E}\big[ F(\tilde{w}_{t}) - F(w_{*}) \big] $, $Y_1 :=  F(\tilde{w}_{0}) - F(w_{*})$, $D := (\mu^2  + L^2)\sigma_{*}^2$, and $\rho := \frac{\mu}{3}$ into the last estimate.
However, to guarantee $\eta_t = \frac{6}{\mu(t + \beta)} \leq \min\set{\frac{1}{2L}, \sqrt{\frac{3}{4}} \frac{\mu}{L^2}}$, we need to impose $\frac{6}{\beta + 1} \leq \min\set{\frac{\mu}{2L}, \sqrt{\frac{3}{4}} \frac{\mu^2}{L^2}}$, which holds if $\beta \geq 12\kappa^2 - 1$.

To prove  \eqref{eq:scvx_bound1}, we use \eqref{eq:scvx_key_bound2} from Lemma~\ref{le:scvx_key_bound2} and $F(\tilde{w}_{t-1}) - F(w_{*}) \geq \frac{\mu}{2}\norms{\tilde{w}_{t-1} - w_{*}}^2$ to get
\begin{equation*} 
\mathbb{E}\big[ \norms{\tilde{w}_t - w_{*}}^2 \big]  \leq  \big(1 - \mu\eta_t \big) \mathbb{E}\big[ \norms{\tilde{w}_{t-1} - w_{*}}^2 \big] +  \frac{2L\eta_t^3\sigma_{*}^2}{3n}.
\end{equation*}
By letting $\eta_t := \frac{2}{t + \beta}$, $Y_{t+1} := \mathbb{E}\big[ \norms{\tilde{w}_t - w_{*}}^2 \big]$, $\rho := \mu$, and $D :=  \frac{2L\sigma_{*}^2}{3n}$ for all $t \geq 1$.
The last estimate becomes 
\begin{equation*}
Y_{t+1} \leq (1 - \rho\eta_t) Y_t + D\eta_t^3, \quad \forall t \geq 1.
\end{equation*}
By applying Lemma~\ref{lem_general_framework} with $q=2$ we have $Y_{t+1} \leq \frac{\beta(\beta - 1)}{(t + \beta - 1)(t + \beta)}Y_1 + \frac{8D}{\rho^3(t + \beta - 1)(t + \beta)}$.
In order to guarantee that $\eta_t  = \frac{2}{t + \beta} \leq \frac{\sqrt{5} - 1}{2L}$ for all $t \geq 1$, we need to choose $\beta \geq  \frac{4L}{\sqrt{5}-1} - 1$.
However, since $L \leq \frac{\sqrt{5}-1}{2}$ and $\beta \geq 1$, this condition automatically holds.
\end{proof}

\section{Convergence Analysis for Nonconvex Case}\label{sec_appendix_nonconvex}
In this appendix, we provide the full proofs of the results in Section~\ref{sec_analysis_02}.

\subsection{Proofs of Theorem \ref{thm_nonconvex_01}, Corollary \ref{cor_nonconvex_01}, and Corollary~\ref{cor_nonconvex_01_01}}\label{apdx:proof_Th1_Corr_12}
\begin{proof}[\mytxtbi{The proof of Theorem~\ref{thm_nonconvex_01}}]
First, using \eqref{eq:ncvx_keybound1} into \eqref{eq_key_thm_01_est}, we can derive that
\begin{equation*}
\arraycolsep=0.2em
\begin{array}{lcl}
    F( w_0^{(t+1)} ) & \overset{\eqref{eq:ncvx_keybound1}}{\leq} &  F( w_0^{(t)} ) - \frac{\eta_t}{2} \norms{ \nabla F( w_0^{(t)} )  }^2 + \frac{L^2 \eta_t^3}{2} \left[ \left( 3 \Theta + 2 \right) \Vert   \nabla F ( w_0^{(t)}) \Vert^2 + 3 \sigma^2 \right]  \vspace{1.5ex}\\
    & = & F( w_0^{(t)} ) - \frac{\eta_t}{2}\big(1 -  L^2\eta_t^2(3\Theta + 2)\big) \norms{ \nabla F( w_0^{(t)} )  }^2 + \frac{3 L^2 \sigma^2  \eta_t^3}{2} \vspace{1ex}\\
    & \leq & F( w_0^{(t)} ) - \frac{\eta_t}{4} \norms{ \nabla F( w_0^{(t)} )  }^2 + \frac{3 L^2 \sigma^2  \eta_t^3}{2}.
\end{array}
\end{equation*}
where the last inequality follows since $\eta_t^2 \leq \frac{1}{2(3 \Theta + 2) L^2}$. Note that $\tilde{w}_t = w_0^{(t+1)}$ and $\tilde{w}_{t-1} = w_0^{(t)}$ in Algorithm~\ref{sgd_replacement}, the last estimate becomes
\begin{equation}\label{eq_001}
    F( \tilde{w}_t )  \leq F( \tilde{w}_{t-1} ) - \frac{\eta_t}{4} \norms{ \nabla F( \tilde{w}_{t-1} )  }^2 +  \frac{3 L^2 \sigma^2  \eta_t^3}{2}.
\end{equation}
Using $\eta_t := \eta$ into \eqref{eq_001} and  rearranging  its result, then taking expectation we end up with
\begin{equation*}
  \mathbb{E}\big[ \norms{\nabla F ( \tilde{w}_{t-1} ) }^2 \big]    \leq  \frac{4}{\eta} \mathbb{E} \big[F( \tilde{w}_{t-1} ) - F( \tilde{w}_t ) \big] \ + \  6 L^2 \sigma^2  \eta^2. 
\end{equation*}
Taking average the last inequality from $t :=1$ to $t := T$ and using the fact that $\mathbb{E}\big[ F(\tilde{w}_t) \big] \geq F_{*}$ from Assumption~\ref{ass_basic}(i), we finally obtain
\begin{equation*}
    \frac{1}{T} \sum_{t=1}^T \mathbb{E}\big[ \norms{ \nabla F ( \tilde{w}_{t-1} ) }^2 \big]  \leq \frac{4}{T \eta} \big[ F ( \tilde{w}_{0} ) - F_{*} \big] + 6 L^2 \sigma^2 \eta^2,
\end{equation*}
which is exactly \eqref{eq_thm_nonconvex_01}.
Note that, since $\tilde{w}_0$ is deterministic, we drop the expectation on the right-hand side of this estimate  \eqref{eq_thm_nonconvex_01}.

If $\pi^{(t)}$ is sampled uniformly at random without replacement from $[n]$, then taking expectation both sides of \eqref{eq_key_thm_01_est}, and then using \eqref{eq:ncvx_keybound1_reshuffling}, we obtain 
\begin{equation*} 
\hspace{-0ex}
\arraycolsep=0.1em
\begin{array}{lcl}
    \mathbb{E} \big[ F( w_0^{(t+1)} ) \big] & \leq &  \mathbb{E} \big[ F( w_0^{(t)} ) \big] - \frac{\eta_t}{2} \mathbb{E} \big[ \norms{ \nabla F( w_0^{(t)} ) }^2 \big] + \frac{L^2 \eta_t}{2n} \mathbb{E} \big[  \sum_{i=0}^{n-1}\norms{w_i^{(t)} - w_0^{(t)}}^2 \big] \vspace{1ex} \\
    & \overset{\eqref{eq:ncvx_keybound1_reshuffling}}{\leq} & \mathbb{E} \big[ F( w_0^{(t)} ) \big] - \frac{\eta_t}{2} \mathbb{E} \big[ \Vert \nabla{ F} ( w_0^{(t)} ) \Vert^2 \big] + \frac{L^2 \eta_t^3}{n} \left[ \left( \Theta + n \right)\mathbb{E} \big[ \Vert   \nabla F ( w_0^{(t)})  \Vert^2 \big] +  \sigma^2 \right] \vspace{1ex}\\
    & \leq & \mathbb{E} \big[ F( w_0^{(t)} ) \big] - \frac{\eta_t}{4} \mathbb{E} \big[ \norms{ \nabla F( w_0^{(t)} ) }^2 \big] + \frac{L^2 \sigma^2 \eta_t^3}{n}, 
\end{array}
\hspace{-0ex}
\end{equation*}
where the last inequality follows since $\eta_t^2 \leq \frac{n}{2(\Theta + n)L^2}$. Note that $\tilde{w}_t = w_0^{(t+1)}$ and $\tilde{w}_{t-1} = w_0^{(t)}$ in Algorithm~\ref{sgd_replacement}, the last estimate becomes
\begin{equation}\label{eq:eq_001_RR_est}
\mathbb{E} \big[ F( \tilde{w}_t ) - F_{*} \big] \leq \mathbb{E} \big[ F( \tilde{w}_{t-1} ) - F_{*} \big] - \frac{\eta_t}{4} \mathbb{E} \big[ \norms{ \nabla F( \tilde{w}_{t-1} )  }^2 \big] +  \frac{L^2 \sigma^2 \eta_t^3}{n}.
\end{equation}
Using this estimate and with a similar proof as of \eqref{eq_thm_nonconvex_01}, we obtain \eqref{eq_thm_nonconvex_01_RR}.
\end{proof}

\begin{proof}[\mytxtbi{The proof of Corollary \ref{cor_nonconvex_01}}]
Given $2\sigma^2 \geq \epsilon$ for a given accuracy $\epsilon > 0$, to guarantee $\frac{1}{T} \sum_{t=1}^T \norms{ \nabla F ( \tilde{w}_{t-1} ) }^2 \leq \epsilon$, by using \eqref{eq_thm_nonconvex_01} in Theorem~\ref{thm_nonconvex_01}, we impose
\begin{equation*}
    \frac{4}{T \eta} \big[ F ( \tilde{w}_{0} ) - F_{*} \big] +  6 L^2 \sigma^2 \eta^2 \leq \epsilon. 
\end{equation*}
Using $\eta = \frac{\sqrt{\epsilon}}{2L \sigma \sqrt{3\Theta + 2}} \leq \frac{1}{L\sqrt{2(3\Theta+2)}}$ into this inequation, we can easily get
\begin{equation*}
\frac{8 L \sigma \sqrt{3\Theta+2}}{T \sqrt{\epsilon}} \big[ F ( \tilde{w}_{0} ) - F_{*} \big] \leq \epsilon \left[ \frac{6\Theta + 1}{6 \Theta + 4)} \right] ~~~~\Rightarrow~~~~ T \geq \frac{16 L \sigma (3\Theta+2)^{3/2} \big[ F ( \tilde{w}_{0} ) - F_{*} \big]}{(6\Theta + 1)} \cdot \frac{1}{\epsilon^{3/2}}
\end{equation*}
Rounding this expression we get $T := \left\lfloor \frac{16 L \sigma (3\Theta+2)^{3/2} \left[ F ( \tilde{w}_{0} ) - F_{*} \right]}{(6\Theta + 1)} \cdot \frac{1}{\epsilon^{3/2}} \right\rfloor$.
As a result, the total number of gradient evaluations is $\mathcal{T}_{\nabla{f}} := n T = \left\lfloor \frac{16 L \sigma (3\Theta+2)^{3/2} \left[ F ( \tilde{w}_{0} ) - F_{*} \right]}{(6\Theta + 1)} \cdot \frac{n}{\epsilon^{3/2}} \right\rfloor$.

Alternatively, let us choose $\eta := \frac{\sqrt{n\epsilon}}{2L\sigma\sqrt{2(\Theta/n + 1)}}$, where $0 < \epsilon \leq \frac{4\sigma^2}{n}$.
Then, we have $0 < \eta \leq \frac{1}{L\sqrt{2(\Theta/n + 1)}}$.
Similar to the above proof, but using \eqref{eq_thm_nonconvex_01_RR}, we have 
\begin{equation*}
\frac{4}{T \eta} \big[ F ( \tilde{w}_{0} ) - F_{*} \big] +  \frac{4 L^2 \sigma^2 \eta^2}{n} = \frac{8 L\sigma\sqrt{2(\Theta/n + 1)}}{T\sqrt{n\epsilon}} \big[ F ( \tilde{w}_{0} ) - F_{*} \big] + \frac{\epsilon}{2(\Theta/n + 1)} \leq \epsilon.
\end{equation*}
This condition leads to $T \geq \frac{8 L\sigma(2\Theta/n + 2)^{3/2} \left[ F ( \tilde{w}_{0} ) - F_{*} \right] }{(2\Theta/n +1)} \cdot \frac{1}{\sqrt{n} \epsilon^{3/2}}$.
Hence, the total number of gradient evaluations is $\mathcal{T}_{\nabla{f}} = nT = \left\lfloor  \frac{8L\sigma(2\Theta/n + 2)^{3/2} \left[ F ( \tilde{w}_{0} ) - F_{*} \right]  }{(2\Theta/n + 1)\epsilon^{3/2}} \cdot \frac{\sqrt{n}}{\epsilon^{3/2}} \right\rfloor$.
\end{proof}

\begin{proof}[\mytxtbi{The proof of Corollary \ref{cor_nonconvex_01_01}}]
Substituting  $\eta = \frac{\gamma}{T^{1/3}} \leq \frac{1}{2L\sqrt{3\Theta+2}} \leq \frac{1}{L}$ into  \eqref{eq_thm_nonconvex_01} of Theorem~\ref{thm_nonconvex_01}, we obtain
\begin{equation*}
    \frac{1}{T} \sum_{t=1}^T \mathbb{E}\big[ \norms{ \nabla F ( \tilde{w}_{t-1} ) }^2 \big] \leq \frac{4}{T \eta} \big( F ( \tilde{w}_{0} ) - F_{*} \big) + 6 L^2 \sigma^2 \eta^2 = \frac{1}{T^{2/3}} \left[  \frac{4 (F ( \tilde{w}_{0} ) - F_{*} )}{\gamma} + 6 L^2 \sigma^2 \gamma^2 \right], 
\end{equation*}
which is exactly our desired estimate \eqref{eq_nonconvex_01}.

If, in addition, $\pi^{(t)}$ is sampled uniformly at random from $[n]$, then by choosing $\eta := \frac{\gamma n^{1/3}}{T^{1/3}}$ such that $\eta_t \leq \frac{1}{L\sqrt{2(\Theta/n + 1)}} \leq \frac{1}{L}$, we obtain from \eqref{eq_thm_nonconvex_01_RR} that
\begin{equation*} 
\begin{array}{l}
    \frac{1}{T} \sum_{t=1}^T \mathbb{E}\big[ \norms{\nabla F ( \tilde{w}_{t-1} )}^2 \big] \leq \frac{4}{T \eta} \big[ F ( \tilde{w}_{0} ) - F_{*} \big] +  \frac{4 L^2 \sigma^2 \eta^2}{n} = \frac{1}{n^{1/3}T^{2/3}} \left[  \frac{4 [F ( \tilde{w}_{0} ) - F_{*} ]}{\gamma} + 4 L^2 \sigma^2 \gamma^2 \right].
\end{array}
\end{equation*}
This proves \eqref{eq_nonconvex_01b}.
Here, we need to choose $T \geq 1$ such that $\frac{\gamma n^{1/3}}{T^{1/3}} \leq \frac{1}{L\sqrt{2(\Theta/n + 1)}}$.
\end{proof}

\subsection{Proof of Theorem \ref{thm_nonconvex_02}: Asymptotic convergence with diminishing stepsize}
\label{apdx:proof_Th2_and_3}
To establish Theorem \ref{thm_nonconvex_02}, we will use the following lemma from \citep{BertsekasSurvey}. 

\begin{lem}[\citep{BertsekasSurvey}]\label{prop_supermartingale}
Let $\sets{Y_t}_{t\geq 0}$, $\sets{Z_t}_{t\geq 0}$, and $\set{W_t}_{t\geq 0}$ be three sequences of random variables.
Let $\sets{\mathcal{F}_t}_{t\geq 0}$ be a filtration, i.e. a $\sigma$-algebras such that $\mathcal{F}_t \subseteq \mathcal{F}_{t+1}$ for all $t \geq 0$. 
Suppose that the following conditions hold:
\begin{compactitem}
\item[$\mathrm{(a)}$] $Y_t$, $Z_t$, and $W_t$ are nonnegative and $\mathcal{F}_t$-measurable for all $t \geq 0$;
\item[$\mathrm{(b)}$] for each $t \geq 0$, we have $\Exp{Y_{t+1} \mid \mathcal{F}_t} \leq Y_t - Z_t + W_t$;
\item[$\mathrm{(c)}$] with probability $1$ $($w.p.1$)$, it holds that  $\sum_{t=0}^{\infty} W_t < +\infty$.
\end{compactitem}
Then, w.p.1, we have
\begin{gather*}
\sum_{t=0}^{\infty} Z_t < +\infty \quad \text{and} \quad Y_t \to Y \geq 0 \ \text{as} \ t \to +\infty. 
\end{gather*}
\end{lem}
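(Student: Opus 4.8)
The plan is to reduce the claim to the classical convergence theorem for nonnegative supermartingales, using a stopping-time truncation to accommodate the fact that hypothesis (c) is only almost sure rather than in expectation. First I would define the auxiliary process $U_t := Y_t + \sum_{s=0}^{t-1} Z_s - \sum_{s=0}^{t-1} W_s$ (with empty sums equal to $0$). For each fixed $t$ this is a finite sum of integrable, $\mathcal{F}_t$-measurable random variables, hence itself integrable and adapted. Using hypothesis (b) directly, $\mathbb{E}[U_{t+1}\mid\mathcal{F}_t] = \mathbb{E}[Y_{t+1}\mid\mathcal{F}_t] + \sum_{s=0}^{t} Z_s - \sum_{s=0}^{t} W_s \leq U_t$, so $\{U_t\}$ is a supermartingale. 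The obstacle at this point is that $U_t$ need not be bounded below, because of the $-\sum_{s} W_s$ term, so the convergence theorem cannot be invoked yet.

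To repair this I would introduce, for each integer $m \geq 1$, the stopping time $\tau_m := \inf\{t \geq 0 : \sum_{s=0}^{t} W_s > m\}$; this is a genuine $\{\mathcal{F}_t\}$-stopping time since $\sum_{s=0}^t W_s$ is $\mathcal{F}_t$-measurable. The stopped process $U_{t\wedge\tau_m}$ remains a supermartingale, and for $t < \tau_m$ one has $\sum_{s=0}^{t\wedge\tau_m - 1} W_s \leq m$; combined with $Y_t, Z_s \geq 0$ this gives $U_{t\wedge\tau_m} + m \geq 0$. Thus $U_{t\wedge\tau_m} + m$ is a nonnegative supermartingale and converges almost surely by the standard supermartingale convergence theorem. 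On the event $\{\tau_m = \infty\}$ we have $U_{t\wedge\tau_m} = U_t$, so $U_t$ converges a.s. there. Since hypothesis (c) yields $\sum_{s} W_s < \infty$ almost surely, the events $\{\tau_m = \infty\}$ (which contain $\{\sum_s W_s < m\}$) increase to a set of full probability, and therefore $U_t$ converges almost surely to a finite limit.

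Finally I would translate convergence of $U_t$ back into the two desired conclusions. Because $\sum_{s=0}^{t-1} W_s \to \sum_{s=0}^{\infty} W_s < \infty$ almost surely, convergence of $U_t$ forces $Y_t + \sum_{s=0}^{t-1} Z_s$ to converge a.s. to a finite value $L$. The partial sums $\sum_{s=0}^{t-1} Z_s$ are nondecreasing (as $Z_s \geq 0$) and bounded above by $Y_t + \sum_{s=0}^{t-1} Z_s$, which is eventually bounded; hence $\sum_{s=0}^{\infty} Z_s \leq L < \infty$ almost surely. Subtracting this convergent sum then shows $Y_t = (Y_t + \sum_{s=0}^{t-1} Z_s) - \sum_{s=0}^{t-1} Z_s$ converges a.s. to some limit $Y$, which is nonnegative since each $Y_t \geq 0$. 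I expect the main obstacle to be the rigorous handling of the almost-sure (rather than in-expectation) summability of $W_t$: the truncation at $\tau_m$ is exactly what converts this a.s. hypothesis into the boundedness-below required to apply nonnegative supermartingale convergence, and one must check carefully both that each $U_{t\wedge\tau_m}$ is integrable and that the events $\{\tau_m = \infty\}$ exhaust the probability space.
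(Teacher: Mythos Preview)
Your argument is correct and is the standard proof of the Robbins--Siegmund lemma: form the compensated process $U_t$, show it is a supermartingale, truncate via the stopping times $\tau_m$ to force nonnegativity, apply Doob's convergence theorem, and then unwind. Note, however, that the paper does \emph{not} provide its own proof of this lemma: it is stated with a citation to Bertsekas' survey and used as a black box in the proof of Theorem~\ref{thm_nonconvex_02}. So there is no proof in the paper to compare yours against; you have simply filled in what the authors left to the literature.

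One minor remark: you describe $U_t$ as a ``finite sum of integrable'' random variables, but the lemma as stated assumes only nonnegativity and $\mathcal{F}_t$-measurability, not integrability. This is a routine technicality in Robbins--Siegmund-type results: conditional expectations of nonnegative random variables are always well-defined (possibly $+\infty$), and after the $\tau_m$-truncation the shifted process $U_{t\wedge\tau_m}+m$ is nonnegative, so Doob's convergence theorem for nonnegative supermartingales applies without a separate integrability hypothesis. Your proof goes through unchanged once this is noted.
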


Using Lemma~\ref{prop_supermartingale} we can now prove Theorem~\ref{thm_nonconvex_02} in the main text as follows.

\begin{proof}[\mytxtbi{The proof of Theorem \ref{thm_nonconvex_02}}]
First, following the same argument as in the proof of \eqref{eq_001} of  Theorem~\ref{thm_nonconvex_01}, we have
\begin{equation*}
    F( \tilde{w}_{t+1} )  \leq F( \tilde{w}_{t} ) - \frac{\eta_{t+1}}{4} \norms{ \nabla F( \tilde{w}_{t} ) }^2 + \frac{3 L^2 \sigma^2 \eta_{t+1}^3}{2}. 
\end{equation*}
Let us define $\mathcal{F}_t = \sigma(\tilde{w}_{0},\cdots,\tilde{w}_{t})$ the $\sigma$-algebra generated by $\sets{ \tilde{w}_{0},\cdots,\tilde{w}_{t} }$. 
Then, for $t \geq 0$, the last inequality implies
\begin{equation*}
   \mathbb{E}\big[ F( \tilde{w}_{t+1} ) - F_{*} \mid \mathcal{F}_{t} \big]  \leq \left[ F( \tilde{w}_{t} ) - F_{*} \right] - \frac{\eta_{t+1}}{4} \norms{ \nabla F( \tilde{w}_{t} )  }^2 + \tfrac{3 L^2 \sigma^2 \eta_{t+1}^3}{2}. 
\end{equation*}
Let us define $Y_t := [F( \tilde{w}_{t} ) - F_{*}] \geq 0$, $Z_t := \frac{\eta_{t+1}}{4} \norms{ \nabla F( \tilde{w}_{t} )  }^2 \geq 0$ and $W_t := \frac{3}{2} L^2 \sigma^2 \eta_{t+1}^3$.
Then, the first condition (a) of Lemma~\ref{prop_supermartingale} holds.
Moreover, the last inequality shows that $\Exp{Y_{t+1} \mid \mathcal{F}_t} \leq Y_t - Z_t + W_t$, which means that the condition (b) of Lemma~\ref{prop_supermartingale} holds.
Since $\sum_{t=1}^\infty \eta_t^3  < +\infty$, we have $\sum_{t=0}^{\infty}W_t < +\infty$, which fulfills the condition (c) of Lemma~\ref{prop_supermartingale}.
Then, by applying Lemma \ref{prop_supermartingale}, we obtain  w.p.1 that
\begin{equation*}
F( \tilde{w}_t ) - F_{*} \to Y \geq 0 \ \text{as} \ t \to +\infty, \quad \text{and} \quad \sum_{t=0}^{\infty} \frac{\eta_{t+1}}{4} \norms{ \nabla F( \tilde{w}_t ) }^2 < +\infty. 
\end{equation*}
We prove $\liminf\limits_{t\to\infty} \norms{ \nabla F ( \tilde{w}_{t-1} ) } = 0$ w.p.1. by contradiction.
Indeed, we assume that there exist $\epsilon > 0$ and $t_0 \geq 0$ such that $\norms{ \nabla F ( \tilde{w}_t ) }^2 \geq \epsilon$ for all $t \geq t_0$. 
In this case, since $\sum_{t = 0}^{\infty}\eta_t = \infty$, we have
\begin{equation*}
  \infty >   \sum_{t=t_0}^{\infty} \frac{\eta_{t+1}}{4} \norms{ \nabla F( \tilde{w}_t ) }^2  \geq \frac{\epsilon}{4} \sum_{t = t_0}^{\infty} \eta_{t+1}  = \infty. 
\end{equation*}
This is a contradiction. 
As a result, w.p.1., we have $\liminf\limits_{k\to\infty} \norms{ \nabla F ( \tilde{w}_{k} ) }^2 = 0$, or equivalently, it holds that $\liminf\limits_{k\to\infty} \norms{ \nabla F ( \tilde{w}_{k} ) } = 0$.
The proof still holds if we use \eqref{eq:eq_001_RR_est} of Theorem~\ref{thm_nonconvex_01}.
\end{proof}

\subsection{Convergence analysis for different learning rates}\label{subsec:new_results}
This appendix provides convergence analysis for general choices of learning rate.

\begin{proof}[\mytxtbi{The proof of Theorem~\ref{thm_nonconvex_04}}]
By \eqref{eq_001}, we have
\begin{align*}
    F( \tilde{w}_t ) & \leq F( \tilde{w}_{t-1} ) - \frac{\eta_t}{4} \| \nabla F( \tilde{w}_{t-1} )   \|^2 + \frac{3 L^2 \sigma^2 \eta_t^3}{2} \leq F( \tilde{w}_{t-1} ) + \frac{3 L^2 \sigma^2 \eta_t^3}{2}. 
\end{align*}
Alternatively, if a randomized reshuffling scheme is used, then by using \eqref{eq:eq_001_RR_est}, we have
\begin{equation*} 
\mathbb{E} \big[ F( \tilde{w}_t ) \big] \leq \mathbb{E} \big[ F( \tilde{w}_{t-1} ) \big] - \frac{\eta_t}{4} \mathbb{E} \big[ \norms{ \nabla F( \tilde{w}_{t-1} )  }^2 \big] +  \frac{L^2 \sigma^2 \eta_t^3}{n} \leq  \mathbb{E} \big[ F( \tilde{w}_{t-1} ) \big] + \frac{L^2 \sigma^2 \eta_t^3}{n}.
\end{equation*}
Combining both cases, we can write them in a single inequality as
\begin{equation}\label{eq:th5_est100} 
\mathbb{E} \big[ F( \tilde{w}_t ) \big] \leq \mathbb{E} \big[ F( \tilde{w}_{t-1} ) \big] - \frac{\eta_t}{4} \mathbb{E} \big[ \norms{ \nabla F( \tilde{w}_{t-1} )  }^2 \big] +  D \cdot \eta_t^3 \leq  \mathbb{E} \big[ F( \tilde{w}_{t-1} ) \big] + D \cdot \eta_t^3,
\end{equation}
where $D := \frac{3}{2}L^2\sigma^2$ for the general shuffling strategy and $D := \frac{L^2\sigma^2}{n}$ for the randomized reshuffling strategy.

Since $\eta_t = \frac{\gamma}{(t+\beta)^{\alpha}} $, summing up the last inequality \eqref{eq:th5_est100} from $t = 1$ to $t = k \geq 1$, we have
\begin{equation*}
\arraycolsep=0.2em
\begin{array}{lclcl}
    F( \tilde{w}_k ) & \leq & F( \tilde{w}_{0} ) + D \cdot \sum_{t=1}^k \eta_t^3   & = &   F( \tilde{w}_{0} ) + D \cdot \sum_{t=1}^k \frac{\gamma^3}{(t+\beta)^{3\alpha}}  \vspace{1ex}\\
    & \overset{\eqref{integral_03}}{\leq} &  F( \tilde{w}_{0} ) + D \cdot \gamma^3 \int_{t=0}^{k} \frac{dt}{(t+\beta)^{3\alpha}}   & = &   F( \tilde{w}_{0} ) + D \cdot \gamma^3 \left[ - \frac{(t+\beta)^{-(3\alpha - 1)}}{3 \alpha - 1} \Big |_{t = 0}^{k} \right]  \vspace{1ex}\\
    & \leq &  F( \tilde{w}_{0} ) + \frac{D \gamma^3}{ (3\alpha - 1)\beta^{3\alpha - 1}}. 
\end{array}
\end{equation*}
Here, we use the fact that $\frac{1}{(t+\beta)^{3\alpha}}$ is nonnegative and monotonically decreasing on $[0, +\infty)$ and  $\frac{1}{3} < \alpha < 1$. 
Subtracting $F_{*}$ from both sides of the last estimate, for $t \geq 1$, we have
\begin{equation}\label{bounded_function_01}
     \mathbb{E}\big[ F( \tilde{w}_t ) -F_{*}\big]   \leq  \mathbb{E}\big[ F( \tilde{w}_{0} ) -F_{*} \big] + \frac{D \gamma^3}{ (3\alpha - 1)\beta^{3\alpha - 1}}.
\end{equation}
On the other hand, subtracting $F_{*}$ from both sides of \eqref{eq:th5_est100}, we have
\begin{equation}\label{eq_rec_01f_01}
     \mathbb{E}\big[ F( \tilde{w}_t ) - F_{*} \big]  \leq  \mathbb{E}\big[  F( \tilde{w}_{t-1} ) - F_{*}  \big] - \frac{\eta_t}{4}  \mathbb{E}\big[ \norms{ \nabla F( \tilde{w}_{t-1} )  }^2 \big] +  D \cdot \eta_t^3. 
\end{equation}
Now, let us define $Y_t :=  \mathbb{E}\big[ F(\tilde{w}_{t-1}) - F_{*} \big] \geq 0$, $Z_t :=  \mathbb{E}\big[ \norms{ \nabla F( \tilde{w}_{t-1} )  }^2 \big] \geq 0$, for $t \geq 1$, and $\rho := \frac{1}{4}$.
The estimate \eqref{eq_rec_01f_01} becomes
\begin{equation*}
Y_{t+1} \leq Y_t - \rho \eta_t Z_t + D \eta_t^3. 
\end{equation*} 
Let us define $C := [F( \tilde{w}_{0} ) -F_{*}] + \frac{D \gamma^3}{(3\alpha - 1)\beta^{3\alpha - 1}} > 0$. 
By \eqref{bounded_function_01}, we have $Y_t \leq C$ (note that $H = 0$ in Lemma~\ref{lem_general_framework_02}), $t \geq 1$. 
Applying Lemma~\ref{lem_general_framework_02} with $q = 3$ and $m = 1$, we conclude that
\begin{compactitem}
    \item If $\alpha = \frac{1}{2}$, we have
    \begin{equation*}
    \begin{array}{lcl}
        \frac{1}{T} \sum_{t = 1}^{T}  \mathbb{E}\big[ \norms{ \nabla F( \tilde{w}_{t-1} )  }^2 \big]
        & \leq & \frac{4 (1+\beta)^{1/2} [F( \tilde{w}_{0} ) -F_{*}]}{ \gamma} \cdot \frac{1}{T} \ + \ \frac{4 C}{ \gamma} \cdot \frac{(T - 1 + \beta)^{1/2}}{T} \vspace{1ex}\\
         &&  + {~} 4 D \gamma^{2} \cdot \frac{\log(T+\beta) - \log(\beta)}{T}. 
    \end{array}
    \end{equation*}
    \item If $\alpha \neq \frac{1}{2}$, we have
    \begin{equation*}
    \begin{array}{lcl}
        \frac{1}{T} \sum_{t = 1}^{T}  \mathbb{E}\big[ \norms{ \nabla F( \tilde{w}_{t-1} )  }^2 \big]  & \leq & \frac{4 (1+\beta)^{\alpha} [F( \tilde{w}_{0} ) -F_{*}]}{ \gamma} \cdot \frac{1}{T} \ + \ \frac{2 C}{\alpha \gamma} \cdot \frac{(T - 1 + \beta)^{\alpha}}{T}  \vspace{1ex}\\
        && + {~} \frac{4 D \gamma^{2}}{ (1 - 2\alpha)} \cdot \frac{(T + \beta)^{1 - 2\alpha}}{T}. 
    \end{array}
    \end{equation*}
\end{compactitem}
For the general shuffling strategy, since $D := \frac{3}{2}L^2\sigma^2$, we need to impose $\eta_t = \frac{\gamma}{(t + \beta)^{\alpha}} \leq \frac{1}{L\sqrt{2(3\Theta + 2)}}$ for all $t \geq 1$.
This condition holds if $\gamma L\sqrt{2(3\Theta + 2)} \leq (\beta + 1)^{\alpha}$.
For the randomized reshuffling strategy, since $D := \frac{L^2\sigma^2}{n}$, we need to impose $\eta_t = \frac{\gamma}{(t + \beta)^{\alpha}} \leq \frac{1}{L\sqrt{2(\Theta/n + 1)}}$ for all $t \geq 1$.
This condition holds if $\gamma L\sqrt{2(\Theta/n + 1)} \leq (\beta + 1)^{\alpha}$.

Finally, if we replace $\gamma$ by $n^{1/3}\gamma$, then the last condition becomes $\gamma n^{1/3} L\sqrt{2(\Theta/n + 1)} \leq (\beta + 1)^{\alpha}$.
Substituting this new quantity $n^{1/3}\gamma$ into $\gamma$ of the right-hand side of our bounds, we obtain the remaining conclusions of this theorem.
\end{proof}

\begin{proof}[\mytxtbi{The proof of Theorem~\ref{thm_nonconvex_04_02}}]
By \eqref{eq:th5_est100}, we have
\begin{equation*} 
\mathbb{E} \big[ F( \tilde{w}_t ) \big] \leq \mathbb{E} \big[ F( \tilde{w}_{t-1} ) \big] - \frac{\eta_t}{4} \mathbb{E} \big[ \norms{ \nabla F( \tilde{w}_{t-1} )  }^2 \big] +  D \cdot \eta_t^3 \leq  \mathbb{E} \big[ F( \tilde{w}_{t-1} ) \big] + D \cdot \eta_t^3,
\end{equation*}
where $D := \frac{3}{2}L^2\sigma^2$ for the general shuffling strategy, and $D := \frac{L^2\sigma^2}{n}$ for the randomized reshuffling strategy.
Since $\eta_t = \frac{\gamma}{(t+\beta)^{1/3}} $, summing up this inequality from $t = 1$ to $t = k \geq 1$, we obtain
\begin{equation*}
\arraycolsep=0.2em
\begin{array}{lclcl}
    \mathbb{E} \big[ F( \tilde{w}_k ) \big] & \leq & F( \tilde{w}_{0} ) + D \cdot \sum_{t=1}^k \eta_t^3 & = & F( \tilde{w}_{0} ) + D \cdot \sum_{t=1}^k \frac{\gamma^3}{(t+\beta)} \vspace{1ex}\\
    & \overset{\eqref{integral_03}}{\leq} & F( \tilde{w}_{0} ) + \frac{D \gamma^3}{(1 + \beta)} + D \gamma^3 \int_{t=1}^{k} \frac{dt}{(t+\beta)} 
 & \leq & F( \tilde{w}_{0} ) + \frac{D \gamma^3}{(1 + \beta)} + D\gamma^3 \log(k + \beta). 
 \end{array}
\end{equation*}
Here, we use the fact that $\frac{1}{t+\beta}$ is nonnegative and monotonically decreasing on $[0, +\infty)$.
Subtracting $F_{*}$ from both sides of the last estimate, for $t \geq 1$, we have
\begin{equation}\label{bounded_function_02}
\mathbb{E} \big[ F( \tilde{w}_t ) -F_{*}  \big]  \leq \left[ F( \tilde{w}_{0} ) -F_{*} \right] + \frac{D \gamma^3}{(1 + \beta)} + D \gamma^3 \log(t  + \beta). 
\end{equation}
Define $Y_t := \mathbb{E} \big[ F(\tilde{w}_{t-1}) - F_{*} \big] \geq 0$ and $Z_t := \mathbb{E} \big[ \norms{ \nabla F( \tilde{w}_{t-1} )  }^2 \big] \geq 0$ for $t \geq 1$, and $\rho := \frac{1}{4}$.
Then, the estimate \eqref{bounded_function_02} becomes
\begin{equation*}
Y_{t+1} \leq Y_t - \rho \eta_t Z_t + D \eta_t^3. 
\end{equation*} 
Let us define $C := [F( \tilde{w}_{0} ) -F_{*}] + \frac{D \gamma^3}{(1 + \beta)} > 0$, $H: = D \gamma^3 > 0$, and $\theta:= 1 + \beta > 0$.
Clearly, we have $1 + \theta - \beta = 2 > \frac{2}{3}e^{1/2}$. 
By \eqref{bounded_function_02}, we have $Y_t \leq C + H \log(t + \theta)$ for $t \geq 1$. Applying Lemma~\ref{lem_general_framework_02} with $q = 3$, $m = 1$, and $\alpha = \frac{1}{3}$, we conclude that
\begin{equation*}
\begin{array}{lcl}
    \frac{1}{T} \sum_{t = 1}^{T} \mathbb{E}\big[ \norms{ \nabla F( \tilde{w}_{t-1} )  }^2 \big] & \leq & \frac{4 (1+\beta)^{1/3} [F( \tilde{w}_{0} ) -F_{*}]}{ \gamma} \cdot \frac{1}{T} \ + \ \frac{6 C}{ \gamma} \cdot \frac{(T - 1 + \beta)^{1/3}}{T} \vspace{1ex}\\
     && + {~} 6D \gamma^2 \cdot \frac{(T - 1 + \beta)^{1/3} \log(T + \beta)}{T}  \ + \ 12D \gamma^{2} \cdot \frac{(T + \beta)^{1/3}}{T}, 
\end{array}
\end{equation*}
which is our main bound.
The remaining conclusion is proved similarly as in Theorem~\ref{thm_nonconvex_04}.
\end{proof}

\subsection{Proofs of Theorem~\ref{thm_nonconvex_03}: The gradient dominance case}\label{apdx:proof_of_Th5_Cor34}

\begin{proof}[\mytxtbi{The proof Theorem~\ref{thm_nonconvex_03}}]
Using \eqref{eq:th5_est100} from the proof of  Theorem~\ref{thm_nonconvex_04} and Assumption~\ref{ass_PLcond}, we can derive that
\begin{equation*} 
\arraycolsep=0.2em
\begin{array}{lcl}
\mathbb{E} \big[ F( \tilde{w}_t ) - F_{*} \big] & \leq & \mathbb{E} \big[ F( \tilde{w}_{t-1} ) - F_{*} \big] - \frac{\eta_t}{4} \mathbb{E} \big[ \norms{ \nabla F( \tilde{w}_{t-1} )  }^2 \big] +  D \cdot \eta_t^3 \vspace{1ex}\\
&\overset{\eqref{eq_grad_dominant}}{\leq} & \left( 1 - \frac{\eta_t}{4 \tau} \right) \mathbb{E}\big[ F( \tilde{w}_{t-1} ) - F_{*} \big] + D \cdot \eta_t^3. 
\end{array}
\end{equation*}
where $D := \frac{3}{2}L^2\sigma^2$ for the general shuffling strategy, and $D := \frac{L^2\sigma^2}{n}$ for the randomized reshuffling strategy.
Let $Y_t := \mathbb{E} \big[ F(\tilde{w}_{t-1}) - F_{*} \big] \geq 0$, and $\rho := \frac{1}{4 \tau}$.
We now verify that these quantities satisfy the conditions of Lemma~\ref{lem_general_framework} with $q = 2$, i.e. $Y_{t+1} \leq (1 - \rho \eta_t)Y_{t-1} + D \cdot \eta_t^3$.
Applying Lemma~\ref{lem_general_framework} with $\eta_t := \frac{2}{t + \beta}$ for some $\beta \geq 1$, then we obtain 
\begin{equation*}
\mathbb{E} \big[ F(\tilde{w}_{t}) - F_{*} \big] = Y_{t+1} \leq \frac{1}{(t+\beta-1)(t + \beta)}\Big[ \beta(\beta - 1)[ F(\tilde{w}_0) - F_{*} ] + \frac{8D\log(t + \beta)}{\rho^3} \Big].
\end{equation*}
For the general shuffling scheme, we need to choose $\beta \geq 1$ such that $\eta_t = \frac{2}{t + \beta} \leq \frac{1}{L\sqrt{2(3\Theta + 2)}}$ for all $t \geq 1$.
Hence, we can choose $\beta \geq 2L\sqrt{2(3\Theta + 2)} - 1$, and in this case, we obtain \eqref{eq:nonconvex_03_general}.

For the randomized reshuffling strategy, if we choose $\eta_t := \frac{2}{t + 1 + 1/n}$, then to guarantee $\eta_t \leq \frac{1}{L\sqrt{2(\Theta/n + 1)}}$ for all $t \geq 1$, we require $L\sqrt{2(\Theta/n + 1)} \leq 1$.
In this case, by substituting $\beta := 1 + 1/n$ and $D := \frac{L^2\sigma^2}{n}$ into the last estimate, we obtain \eqref{eq:nonconvex_03_RR}.
\end{proof}

\section{New Convergence Analysis for Standard SGD}\label{sec_standard_sgd}
As a side result of our stand-alone technical lemma, Lemma~\ref{lem_general_framework_02}, we show in this appendix that we can apply the analysis framework of Lemma~\ref{lem_general_framework_02} to obtain convergence rate results for the standard stochastic gradient algorithm, abbreviated by SGD.

To keep it more general, we consider the stochastic optimization problem  with respect to some distribution $\mathcal{D}$ as in \eqref{Expectation_problem_01}, i.e.:
\begin{equation}\label{expected_risk}
    \min_{w \in \mathbb{R}^d} \Big\{ F(w) := \mathbb{E}_{\xi \sim \mathcal{D}} \big[ f(w; \xi) \big] \Big\},
\end{equation}
where $\nabla{f}$ is an unbiased gradient estimator of the gradient $\nabla{F}$ of $F$, i.e.:
\begin{equation*}
    \mathbb{E}_{\xi \sim \mathcal{D}} \big[ \nabla f(w ; \xi) \big] = \nabla F(w), \quad  \forall w \in \dom{F}.
\end{equation*}
The standard SGD method without mini-batch for solving \eqref{expected_risk} can be described as in Algorithm~\ref{sgd_algorithm}. 

\begin{algorithm}[hpt!]
\caption{Stochastic Gradient Descent (SGD) Method (without mini-batch)}\label{sgd_algorithm}
\begin{algorithmic}[1]
   \STATE {\bfseries Initialize:} Choose an initial point $w_1\in\dom{F}$.
   \FOR{$t=1,2,\cdots$}
  \STATE Generate a realization of $\xi_t$ and evaluate a stochastic gradient $\nabla f(w_{t}; \xi_t)$;
  \STATE Choose a step size (i.e. a learning rate) $\eta_t > 0$ (specified later); 
   \STATE Update  $w_{t+1} := w_{t} - \eta_t \nabla f(w_{t}; \xi_t)$;
   \ENDFOR
\end{algorithmic}
\end{algorithm}

To analyze convergence rate of Algorithm~\ref{sgd_algorithm}, we assume that problem \eqref{expected_risk} satisfies Assumptions~\ref{ass_basic}(i) and Assumptions \ref{ass_weaker_smooth_F} and \ref{ass_bounded_variance} below. 

\begin{ass}[One-side $L$-smoothness]\label{ass_weaker_smooth_F}
The objective function  $F$ of \eqref{expected_risk}  satisfies
\begin{equation}\label{eq:Lsmooth_2}
F(w) \leq F(w') + \iprods{\nabla F(w'), w-w'} + \frac{L}{2}\norms{w-w'}^2, \quad \forall w, w' \in \dom{F}. 
\end{equation}
\end{ass}

\begin{ass}[Bounded variance]\label{ass_bounded_variance}
For \eqref{expected_risk}, there exists $\sigma \in (0, + \infty)$ such that
\begin{equation}\label{eq_bounded_variance}
    \mathbb{E} \big[ \norms{ \nabla f(w; \xi ) - \nabla F(w) }^2 \big]  \leq \sigma^2, \quad \forall w \in \dom{F}.
\end{equation}
\end{ass}

We notice that we are able to derive the analysis based on the general bounded variance assumption $\mathbb{E}\big[ \norms{ \nabla f(w; \xi ) - \nabla{F}(w) }^2 \big] \leq \Theta \norms{\nabla{F}(w)}^2 + \sigma^2$ for some $\Theta \geq 0$ and $\sigma > 0$ as in Assumption~\ref{ass_general_bounded_variance}. For simplicity, we only consider the special case where $\Theta = 0$ since Assumption~\ref{ass_bounded_variance} is commonly used in the literature for the standard SGD method. We prove our first result for Algorithm~\ref{sgd_algorithm} to solve \eqref{expected_risk} in the following theorem.

\begin{thm}\label{thm_sgd_nonconvex_01}
Assume that Assumptions \ref{ass_basic}$\mathrm{(i)}$, \ref{ass_weaker_smooth_F}, and \ref{ass_bounded_variance} hold for \eqref{expected_risk}. 
Let $\sets{ w_t }$ be generated by Algorithm~\ref{sgd_algorithm} with $0 < \eta_t:= \frac{\gamma}{(t+\beta)^{\alpha}} \leq \frac{1}{L}$ for some $\gamma > 0$, $\beta > 0$, and $\frac{1}{2} < \alpha < 1$, and  $C := [F(w_{1}) - F_*] + \frac{L \sigma^2 \gamma^2}{2(2\alpha - 1)\beta^{2\alpha - 1}} > 0$. 
Then, the following bound holds:
\begin{equation}\label{eq:th7_bounds}
\arraycolsep=0.2em
\begin{array}{lcl}
\displaystyle \frac{1}{T} \sum_{t = 1}^{T} \mathbb{E} \big[ \norm{\nabla F(w_t)}^2 \big]     & \leq & \dfrac{2 (1+\beta)^{\alpha} \left[ F(w_{1}) - F_{*} \right]}{ \gamma} \cdot \dfrac{1}{T} \ + \  \dfrac{C}{ \alpha \gamma} \cdot \dfrac{(T - 1 + \beta)^{\alpha}}{T}  \vspace{1ex}\\
&&  + {~}  \dfrac{L \sigma^2 \gamma}{ (1 - \alpha)} \cdot \dfrac{(T + \beta)^{1 - \alpha}}{T}. 
\end{array}
\end{equation}
\end{thm}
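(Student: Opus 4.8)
The plan is to follow the three-stage template used throughout the paper: derive a per-iteration ``quasi-descent'' recursion, extract from it a uniform bound on the objective residual, and then hand both to the general framework Lemma~\ref{lem_general_framework_02}.

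\textbf{Step 1 (descent recursion).} First I would apply the one-sided smoothness inequality \eqref{eq:Lsmooth_2} of Assumption~\ref{ass_weaker_smooth_F} to the update $w_{t+1} = w_t - \eta_t\nabla f(w_t;\xi_t)$, obtaining
\[
F(w_{t+1}) \le F(w_t) - \eta_t\iprods{\nabla F(w_t),\nabla f(w_t;\xi_t)} + \tfrac{L\eta_t^2}{2}\norms{\nabla f(w_t;\xi_t)}^2 .
\]
Let $\mathcal{F}_t := \sigma(w_1,\dots,w_t)$. Taking conditional expectation, unbiasedness turns the inner-product term into $-\eta_t\norms{\nabla F(w_t)}^2$, and the bias--variance decomposition $\mathbb{E}[\norms{\nabla f(w_t;\xi_t)}^2\mid\mathcal{F}_t] = \norms{\nabla F(w_t)}^2 + \mathbb{E}[\norms{\nabla f(w_t;\xi_t)-\nabla F(w_t)}^2\mid\mathcal{F}_t] \le \norms{\nabla F(w_t)}^2 + \sigma^2$ (Assumption~\ref{ass_bounded_variance}) bounds the quadratic term. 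Using $\eta_t\le 1/L$ so that $1-\tfrac{L\eta_t}{2}\ge\tfrac12$, then taking full expectation and subtracting $F_*$, I get, with $Y_t:=\mathbb{E}[F(w_t)-F_*]\ge 0$ and $Z_t:=\mathbb{E}[\norms{\nabla F(w_t)}^2]\ge 0$, the recursion $Y_{t+1}\le Y_t - \tfrac{\eta_t}{2}Z_t + \tfrac{L\sigma^2}{2}\eta_t^2$.

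\textbf{Step 2 (uniform bound on $Y_t$).} Dropping the nonnegative term $-\tfrac{\eta_t}{2}Z_t$ and summing from $1$ to $k-1$ yields $Y_k \le Y_1 + \tfrac{L\sigma^2\gamma^2}{2}\sum_{t=1}^{k-1}(t+\beta)^{-2\alpha}$. Since $t\mapsto(t+\beta)^{-2\alpha}$ is positive and decreasing, the integral comparison \eqref{integral_03} of Lemma~\ref{lem_sup_gen_framework_02}(c) together with $2\alpha>1$ bounds the series by $\tfrac{1}{(2\alpha-1)\beta^{2\alpha-1}}$, so $Y_t \le C$ for all $t\ge 1$ with exactly the constant $C$ appearing in the statement. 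This is the one place where the hypothesis $\tfrac12<\alpha$ is genuinely used: for $\alpha\le\tfrac12$ the series $\sum\eta_t^2$ diverges and no uniform bound of this form exists.

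\textbf{Step 3 (invoking the general lemma) and the main obstacle.} With the recursion of Step 1 and the bound $Y_t\le C$ of Step 2 in hand, I would apply Lemma~\ref{lem_general_framework_02} with $H=0$, $\rho=\tfrac12$, $m=1$, $q=2$, $D=\tfrac{L\sigma^2}{2}$, $\eta_t=\gamma/(t+\beta)^\alpha$, and $\tfrac12<\alpha<1$, so that $q-m=1$ and $\alpha(q-m)=\alpha\ne 1$, hence $A(T)=(T+\beta)^{1-\alpha}/(1-\alpha)$; then \eqref{eq:key_concl} is precisely the three-term bound \eqref{eq:th7_bounds} (if one prefers, one replays the proof of the lemma directly: write $Z_t\le\tfrac{2(t+\beta)^\alpha}{\gamma}(Y_t-Y_{t+1})+\tfrac{L\sigma^2\gamma}{(t+\beta)^\alpha}$, sum by parts so the boundary term gives $(1+\beta)^\alpha Y_1$, estimate the increments $(t+1+\beta)^\alpha-(t+\beta)^\alpha$ via Lemma~\ref{lem_sup_gen_framework_02}(a) and \eqref{integral_03} to get the $C(T-1+\beta)^\alpha$ term, and bound the residual $\sum(t+\beta)^{-\alpha}\le(T+\beta)^{1-\alpha}/(1-\alpha)$). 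The real work — and the main obstacle — is entirely in this last step's bookkeeping: summing Step~1 naively only controls $\tfrac1T\sum_t\eta_tZ_t$, whereas the theorem wants $\tfrac1T\sum_tZ_t$; converting between the two forces a division by $\eta_t$ followed by an Abel summation, and it is exactly there that the uniform bound $Y_t\le C$ gets consumed and the $(T+\beta)^{1-\alpha}/T$ rate emerges. Getting the constants to line up ($1/\alpha$, $(T-1+\beta)^\alpha$ versus $(T+\beta)^\alpha$, and the coefficient of the noise term) is fiddly but routine once Lemma~\ref{lem_general_framework_02} is available, which is the whole point of isolating that lemma.
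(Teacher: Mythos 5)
Your proposal is correct and follows essentially the same route as the paper's own proof: the identical descent recursion $Y_{t+1}\le Y_t-\tfrac{\eta_t}{2}Z_t+\tfrac{L\sigma^2}{2}\eta_t^2$, the same uniform bound $Y_t\le C$ obtained by dropping the gradient term and using integral comparison with $2\alpha>1$, and the same invocation of Lemma~\ref{lem_general_framework_02} with $\rho=\tfrac12$, $m=1$, $q=2$, $D=\tfrac{L\sigma^2}{2}$, $H=0$. The only caveat --- shared equally by the paper's own proof --- is that Lemma~\ref{lem_general_framework_02} is stated under $\alpha m\le\tfrac12$ whereas here $\alpha m=\alpha>\tfrac12$; replacing the increment bound of Lemma~\ref{lem_sup_gen_framework_02}(a) by the mean-value estimate $(s+1)^{\alpha}-s^{\alpha}\le \alpha s^{\alpha-1}$ repairs this at the cost of a factor at most $2\alpha<2$ in the middle term, exactly as your remark about replaying the lemma's proof directly anticipates.
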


\begin{proof}
Let $\mathcal{F}_t = \sigma(w_1,\cdots,w_t)$ be the $\sigma$-algebra generated by $\set{ w_1, \cdots, w_t}$. 
Then, from the one-side $L$-smoothness of $F$ in \eqref{eq:Lsmooth_2}, we have
\begin{equation*}
\arraycolsep=0.2em
\begin{array}{lcl}
\mathbb{E}[F(w_{t+1}) \mid \mathcal{F}_t]  & \leq &  F(w_t) - \eta_t \norms{ \nabla F(w_t) }^2 +  \frac{\eta_t^2 L}{2}  \mathbb{E}\big[ \norms{ \nabla f(w_t; \xi_t) }^2 \mid  \mathcal{F}_t \big]  \vspace{1ex}\\
 & = & F(w_t) - \eta_t \left(1 - \frac{\eta_t L}{2}  \right) \norms{ \nabla F(w_t) }^2 + \frac{\eta_t^2 L}{2}  \mathbb{E}\big[ \norms{ \nabla f(w_t; \xi_t) - \nabla F(w_t) }^2 \mid \mathcal{F}_t \big]   \vspace{1ex}\\
& \overset{\tiny\eqref{eq_bounded_variance}}{\leq} & F(w_t) - \frac{\eta_t}{2} \norms{ \nabla F(w_t) }^2 + \frac{\eta_t^2 L}{2}  \sigma^2,  
\end{array}
\end{equation*}
where the first equality follows since $\mathbb{E}\big[ \norms{ \nabla f(w_t; \xi_t) - \nabla F(w_t) }^2 \mid \mathcal{F}_t \big] = \mathbb{E}\big[ \norms{ \nabla f(w_t; \xi_t) }^2 \mid \mathcal{F}_t \big] - \norms{ \nabla F(w_t) }^2$; 
and the last inequality follows since $F$ has bounded variance in Assumption~\ref{ass_bounded_variance}. 
Note that $\eta_t \left(1 - \frac{\eta_t L}{2}  \right) \geq \frac{\eta_t}{2}$ since $0 < \eta_t \leq \frac{1}{L}$. 
Subtracting $F_{*}$ from, and then taking full expectation of both sides of the last estimate, we obtain
\begin{equation} \label{eq_recursive_sgd_01}
\arraycolsep = 0.2em
\begin{array}{lcl}
    \Exp{F(w_{t+1}) - F_*} &\leq & \Exp{F(w_{t}) - F_*} - \frac{\eta_t}{2} \Exp{\norm{\nabla F(w_t)}^2} + \frac{\eta_t^2 L \sigma^2}{2}  \vspace{1ex}\\
    & \leq & \Exp{F(w_{t}) - F_*} + \frac{\eta_t^2 L \sigma^2}{2}. 
\end{array}
\end{equation}
Summing up \eqref{eq_recursive_sgd_01}  from $t := 1$ to $t := k \geq 1$, we get
\begin{equation}\label{eq_main_bounded_function_sgd}
\hspace{-1ex}
\arraycolsep=0.2em
\begin{array}{lcl}
    \Exp{F(w_{k+1}) - F_{*} } &\leq & \Exp{F(w_{1}) - F_{*} } + \frac{ L \sigma^2}{2} \sum_{t=1}^k \eta_t^2 = \big[ F(w_{1}) - F_{*} \big] + \frac{L\sigma^2}{2} \sum_{t=1}^k \frac{\gamma^2}{(t + \beta)^{2\alpha}} \vspace{1ex}\\
    & \overset{\tiny \eqref{integral_03}}{\leq} & \big[ F(w_{1}) - F_{*} \big] + \frac{L\sigma^2 \gamma^2}{2} \int_{t=0}^{k} \frac{dt}{(t + \beta)^{2\alpha}}. 
\end{array}
\hspace{-3ex}
\end{equation}
If $\frac{1}{2} < \alpha < 1$, then for $k \geq 1$, we have
\begin{equation}\label{eq_bounded_function_sgd_02}
    \Exp{F(w_{k+1}) - F_{*} } \leq \big[ F(w_{1}) - F_{*} \big] + \frac{L \sigma^2 \gamma^2}{2(2\alpha - 1) \beta^{2\alpha - 1}}.
\end{equation}
Now, let us define $Y_t := \Exp{F(w_{t}) - F_{*}} \geq 0$, $Z_t := \Exp{\norm{\nabla F(w_t)}^2} \geq 0$ for $t \geq 1$, $\rho := \frac{1}{2}$, and $D := \frac{L \sigma^2}{2}$.
Then, the estimate \eqref{eq_recursive_sgd_01} becomes
\begin{equation*}
Y_{t+1} \leq Y_t - \rho \eta_t Z_t + D \eta_t^2. 
\end{equation*}
Let us define $C := \big[ F(w_{1}) - F_{*} \big] + \frac{L \sigma^2 \gamma^2}{2(2\alpha - 1)\beta^{2\alpha - 1}} > 0$. 
By \eqref{eq_bounded_function_sgd_02}, we have $Y_t \leq C$ (note that $H = 0$ in Lemma~\ref{lem_general_framework_02}), $t \geq 1$. 
Applying Lemma~\ref{lem_general_framework_02} with $q = 2$, $m = 1$, and $\frac{1}{2} < \alpha < 1$, we conclude that
\begin{equation*}
\arraycolsep = 0.2em
\begin{array}{lcl}
    \frac{1}{T} \sum_{t = 1}^{T} \mathbb{E}\big[ \norms{\nabla F(w_t)}^2 \big]   & \leq & \frac{2 (1+\beta)^{\alpha} \big[ F(w_{1}) - F_{*} \big]}{ \gamma} \cdot \frac{1}{T} \ + \ \frac{C}{ \alpha \gamma} \cdot \frac{(T - 1 + \beta)^{\alpha}}{T}  \ + \  \frac{L \sigma^2 \gamma}{ (1 - \alpha)} \cdot \frac{(T + \beta)^{1 - \alpha}}{T}, 
    \end{array}
\end{equation*}
which proves \eqref{eq:th7_bounds}.
\end{proof}

\begin{rem}\label{rem_thm_sgd_nonconvex_01}
In Theorem~\ref{thm_sgd_nonconvex_01}, if we choose $\alpha := \frac{1}{2} + \delta$ for some $0 < \delta < \frac{1}{2}$, then we have
\begin{equation*}
\arraycolsep=0.2em
\begin{array}{lcl}
        \frac{1}{T} \sum_{t = 1}^{T} \mathbb{E}\big[ \norm{\nabla F(w_t)}^2 \big]
        & \leq & \frac{2 (1+\beta)^{\frac{1}{2} + \delta} \left[ F(w_{1}) - F_{*} \right]}{ \gamma} \cdot \frac{1}{T} \ + \ \frac{C}{ \gamma (\frac{1}{2} + \delta)} \cdot \frac{(T - 1 + \beta)^{\frac{1}{2} + \delta}}{T}  +  \frac{L \sigma^2 \gamma}{ (\frac{1}{2} - \delta)} \cdot \frac{(T + \beta)^{\frac{1}{2} - \delta}}{T} \vspace{1ex}\\
    & = &  \Ocal\left( \frac{1}{T^{\frac{1}{2} - \delta}} \right),  
\end{array}
\end{equation*}
where $C := \big[ F(w_{1}) - F_{*} \big] + \frac{L \sigma^2 \gamma^2}{4 \delta \beta^{4\delta}} > 0$.
This rate converges to $\Ocal\left( \frac{1}{\sqrt{T}} \right)$ as $\delta \downarrow 0$.
\end{rem}

Finally, if we use $\alpha := 1/2$ and diminishing step-size, then we have the following result.

\begin{thm}\label{thm_sgd_nonconvex_02}
Assume that Assumptions \ref{ass_basic}$\mathrm{(i)}$, \ref{ass_weaker_smooth_F}, and \ref{ass_bounded_variance} hold for \eqref{expected_risk}. 
Let $\sets{ w_t }$ be generated by Algorithm~\ref{sgd_algorithm} with the step-size $0 < \eta_t:= \frac{\gamma}{(t+\beta)^{1/2}} \leq \frac{1}{L}$ for some $\gamma > 0$ and $\beta > 0$, 
and  $C := \big[ F( w_{1} ) - F_{*} \big] + \frac{L\sigma^2 \gamma^2}{2(1+\beta)} > 0$. 
Then, the following bound holds:
\begin{equation}\label{eq:th8_bounds}
\arraycolsep=0.2em
\begin{array}{lcl}
    \frac{1}{T} \sum_{t = 1}^{T} \mathbb{E} \big[ \norm{\nabla F(w_t)}^2 \big]  
    & \leq & \frac{2 (1+\beta)^{1/2} \left[ F( w_{1} ) -F_{*} \right]}{ \gamma} \cdot \frac{1}{T} \ + \ \frac{2 C}{ \gamma} \cdot \frac{(T - 1 + \beta)^{1/2}}{T}   \vspace{1ex}\\
   && + {~} L \gamma \sigma^2 \cdot \frac{(T - 1 + \beta)^{1/2} \log(T + 1 + \beta)}{T} \ + \  2 L \gamma \sigma^2 \cdot \frac{(T + \beta)^{1/2}}{T}  \vspace{1ex}\\
    &=  & \Ocal\left( \frac{\log(T)}{T^{1/2}}  \right).
\end{array}
\end{equation}
\end{thm}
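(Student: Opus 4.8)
The plan is to reuse the scheme from the proof of Theorem~\ref{thm_sgd_nonconvex_01}, the only real change being that with $\alpha = \tfrac12$ the accumulated variance term is a harmonic-type sum and therefore grows logarithmically, so instead of the $H = 0$ branch I will need the $H > 0$ (logarithmic energy) branch of Lemma~\ref{lem_general_framework_02}. First I would take the one-side $L$-smoothness inequality \eqref{eq:Lsmooth_2} along the SGD update, condition on $\mathcal{F}_t := \sigma(w_1,\dots,w_t)$, split $\Exp{\norms{\nabla f(w_t;\xi_t)}^2 \mid \mathcal{F}_t} = \norms{\nabla F(w_t)}^2 + \Exp{\norms{\nabla f(w_t;\xi_t) - \nabla F(w_t)}^2 \mid \mathcal{F}_t}$, and apply Assumption~\ref{ass_bounded_variance} together with $\eta_t \le \tfrac1L$ (which gives $\eta_t(1 - \tfrac{\eta_t L}{2}) \ge \tfrac{\eta_t}{2}$). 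After subtracting $F_*$ and taking full expectation, this is exactly the recursion \eqref{eq_recursive_sgd_01}: $\Exp{F(w_{t+1}) - F_*} \le \Exp{F(w_t) - F_*} - \tfrac{\eta_t}{2}\Exp{\norm{\nabla F(w_t)}^2} + \tfrac{L\sigma^2}{2}\eta_t^2$.

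Second, I would bound the energy $Y_t := \Exp{F(w_t) - F_*}$. Summing \eqref{eq_recursive_sgd_01} from $1$ to $k$ and discarding the nonpositive gradient terms gives $Y_{k+1} \le [F(w_1) - F_*] + \tfrac{L\sigma^2\gamma^2}{2}\sum_{t=1}^{k} \tfrac{1}{t+\beta}$, since here $\eta_t^2 = \tfrac{\gamma^2}{t+\beta}$. The key point specific to $\alpha = \tfrac12$: because $x \mapsto \tfrac{1}{x+\beta}$ is nonnegative and monotonically decreasing on $[0,\infty)$, Lemma~\ref{lem_sup_gen_framework_02}(c) yields $\sum_{t=1}^{k}\tfrac{1}{t+\beta} \le \tfrac{1}{1+\beta} + \int_{1}^{k}\tfrac{dx}{x+\beta} \le \tfrac{1}{1+\beta} + \log(k+\beta)$. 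Hence $Y_t \le C + H\log(t+\theta)$ for all $t \ge 1$ with $C := [F(w_1) - F_*] + \tfrac{L\sigma^2\gamma^2}{2(1+\beta)}$, $H := \tfrac{L\sigma^2\gamma^2}{2}$, and $\theta := 1+\beta$; note also that this choice satisfies $1 + \theta - \beta = 2 > \tfrac{e}{2} = (1 - \alpha m)\,e^{\alpha m/(1-\alpha m)}$ for $\alpha m = \tfrac12$, which is precisely the technical hypothesis demanded by Lemma~\ref{lem_general_framework_02}.

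Third, I would invoke Lemma~\ref{lem_general_framework_02} with $Z_t := \Exp{\norm{\nabla F(w_t)}^2}$, $q = 2$, $m = 1$, $\alpha = \tfrac12$ (so $\alpha m = \tfrac12 \le \tfrac12$), $\rho := \tfrac12$, and $D := \tfrac{L\sigma^2}{2}$, for which hypothesis \eqref{eq_lem_general_framework_02} is exactly \eqref{eq_recursive_sgd_01} and the energy bound $Y_t \le C + H\log(t+\theta)$ was just established. Since $\alpha(q-m) = \tfrac12 \ne 1$, the ``otherwise'' case applies with $A(T) = \tfrac{(T+\beta)^{1/2}}{1/2} = 2(T+\beta)^{1/2}$; substituting $C, H, \theta, \rho, D$ into \eqref{eq:key_concl} and simplifying the numerical constants ($\tfrac1\rho = 2$ and $\tfrac{1}{2\rho\alpha m} = 2$) reproduces the four displayed terms of \eqref{eq:th8_bounds}. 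The stated rate $\Ocal(\log(T)/\sqrt{T})$ then follows because the dominant contribution is the $(T-1+\beta)^{1/2}\log(T+1+\beta)/T$ term, all others being $\Ocal(1/T)$ or $\Ocal(1/\sqrt{T})$.

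I do not anticipate a genuine obstacle here, since Lemma~\ref{lem_general_framework_02} carries out all the hard estimation (in particular the monotonicity argument for $\tfrac{\log(t+1+\theta)}{(t+\beta)^{1-\alpha m}}$ via Lemma~\ref{lem_sup_gen_framework_02}(b)). The only delicate bookkeeping is to phrase the logarithmic energy bound with the correct offset $\theta = 1+\beta$ so that the condition $1 + \theta - \beta > (1-\alpha m)e^{\alpha m/(1-\alpha m)}$ is met at the boundary value $\alpha m = \tfrac12$, and to track the constants that emerge from plugging $\rho = \tfrac12$ and $\alpha m = \tfrac12$ into \eqref{eq:key_concl}.
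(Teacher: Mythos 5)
Your proposal is correct and follows essentially the same route as the paper's own proof: derive the descent recursion \eqref{eq_recursive_sgd_01}, sum it to get the logarithmic energy bound $Y_t \le C + H\log(t+\theta)$ with $H = \tfrac{L\sigma^2\gamma^2}{2}$ and $\theta = 1+\beta$, verify $1+\theta-\beta = 2 > \tfrac{e}{2}$, and apply Lemma~\ref{lem_general_framework_02} with $q=2$, $m=1$, $\alpha=\tfrac12$, $\rho=\tfrac12$, $D=\tfrac{L\sigma^2}{2}$. The constant bookkeeping you describe matches the four terms of \eqref{eq:th8_bounds} exactly.
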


\begin{proof}
If $\alpha := \frac{1}{2}$, then, by \eqref{eq_main_bounded_function_sgd}, we can easily show that
\begin{equation*}
\arraycolsep=0.2em
\begin{array}{lcl}
    \Exp{F(w_{k+1}) - F_{*} } &\leq &  \Exp{F(w_{1}) - F_{*} } + \frac{ L \sigma^2}{2} \sum_{t=1}^k \eta_t^2 = \big[ F(w_{1}) - F_{*} \big] + \frac{L\sigma^2}{2} \sum_{t=1}^k \frac{\gamma^2}{(t + \beta)}  \vspace{1ex}\\
    & \overset{\eqref{integral_03}}{\leq}  &  \big[ F(w_{1}) - F_{*} \big] + \frac{L\sigma^2 \gamma^2}{2(1+\beta)} + \frac{L\sigma^2 \gamma^2}{2} \int_{t=1}^{k} \frac{dt}{(t + \beta)^{2\alpha}}.
\end{array}
\end{equation*}
Hence, for $k \geq 1$, we have
\begin{equation}\label{eq_bounded_function_sgd_01}
    \Exp{F(w_{k+1}) - F_{*} } \leq \big[ F(w_{1}) - F_{*} \big] \ + \ \frac{L\sigma^2 \gamma^2}{2(1+\beta)} \ + \ \frac{L \sigma^2 \gamma^2}{2} \cdot \log(k + 2 + \beta).
\end{equation}
Define $Y_t := \Exp{F(w_{t}) - F_{*}} \geq 0$, $Z_t := \Exp{\norms{\nabla F(w_t)}^2} \geq 0$ for $t \geq 1$, $\rho := \frac{1}{2}$, and $D := \frac{L \sigma^2}{2}$.
Then, the estimate \eqref{eq_recursive_sgd_01} becomes
\begin{equation*}
Y_{t+1} \leq Y_t - \rho \eta_t Z_t + D \eta_t^2. 
\end{equation*}
Let us define $C := [F( w_{1} ) -F_{*}] + \frac{L\sigma^2 \gamma^2}{2(1+\beta)} > 0$, $H: = \frac{ L \sigma^2 \gamma^2}{2} > 0$, and $\theta:= 1 + \beta > 0$.
Clearly, $1 + \theta - \beta = 2 > \frac{1}{2}e$. By \eqref{eq_bounded_function_sgd_01}, we have $Y_t \leq C + H \log(t + \theta)$ for $t \geq 1$. Applying Lemma~\ref{lem_general_framework_02} with $q = 2$, $m = 1$, and $\alpha = \frac{1}{2}$, we conclude that
\begin{equation*}
\arraycolsep = 0.2em
\begin{array}{lcl}
    \frac{1}{T} \sum_{t = 1}^{T} \Exp{\norms{\nabla F(w_t)}^2} 
    & \leq  & \frac{2 (1+\beta)^{1/2} \left[ F( w_{1} ) - F_{*} \right]}{ \gamma} \cdot \frac{1}{T} \ + \  \frac{2 C}{ \gamma} \cdot \frac{(T - 1 + \beta)^{1/2}}{T} \vspace{1ex}\\
    && + {~} L \gamma \sigma^2 \cdot \frac{(T - 1 + \beta)^{1/2} \log(T + 1 + \beta)}{T} \ + \ 2 L \gamma \sigma^2 \cdot \frac{(T + \beta)^{1/2}}{T}  \vspace{1ex}\\
    &= & \Ocal\left( \frac{\log(T)}{T^{1/2}}  \right),
\end{array}
\end{equation*}
which proves \eqref{eq:th8_bounds}.
\end{proof}

\bibliographystyle{unsrtnat}

\end{document}